\documentclass[11pt]{article}

\font\notsosmall=cmr8

\usepackage{lipsum} 

\usepackage{amssymb}
\usepackage{amsthm}
\usepackage{amsfonts}
\usepackage[latin2]{inputenc}
\usepackage{amsmath}
\usepackage{anysize}
\usepackage{hyperref}
\usepackage{bbm}

\usepackage{comment}
\usepackage{enumitem}

\textheight=230mm
\textwidth=158mm
\oddsidemargin=-0mm
\evensidemargin=-35mm
 \hoffset.15cm
 \voffset-0.5cm


%
%

\newtheorem{theorem}{Theorem}[section]
\newtheorem{lemma}[theorem]{Lemma}
\newtheorem{prop}[theorem]{Proposition}
\newtheorem{proposition}[theorem]{Proposition}
\newtheorem{corollary}[theorem]{Corollary}
\newtheorem{conjecture}[theorem]{{Conjecture}}

\newtheorem{claim}[theorem]{{Claim}}

\theoremstyle{remark}
\newtheorem{remark}[theorem]{Remark}

\theoremstyle{definition}
\newtheorem{example}[theorem]{{Example}}

\newtheorem{definition}[theorem]{{Definition}}
\newtheorem{hypothesis}[theorem]{{Hypothesis}}
\newtheorem{notation}[theorem]{{Notation}}

\def\bclaim{\begin{claim}}
\def\eclaim{\end{claim}}
\def\bdefin{\begin{definition}}
\def\edefin{\end{definition}}
\def\bcor{\begin{corollary}}
\def\ecor{\end{corollary}}
\def\bthm{\begin{theorem}}
\def\ethm{\end{theorem}}
\def\bconj{\begin{conjecture}}
\def\econj{\end{conjecture}}
\def\blem{\begin{lemma}}
\def\elem{\end{lemma}}
\def\blemma{\begin{lemma}}
\def\elemma{\end{lemma}}
\def\bprop{\begin{prop}}
\def\eprop{\end{prop}}
\def\bremark{\begin{remark}}
\def\eremark{\end{remark}}
\def\bhyp{\begin{hypothesis}}
\def\ehyp{\end{hypothesis}}
\def\bnot{\begin{notation}}
\def\enot{\end{notation}}
\def\bexample{\begin{example}}
\def\eexample{\end{example}}

\def\MA{Monge--Amp\`ere }
\def\K{K\"ahler }
\def\i{\sqrt{-1}}
\def\del{\partial}
\def\dbar{\bar\partial}
\def\ddbar{\del\dbar}
\def\ra{\rightarrow}
\def\eps{\epsilon}
\newcommand{\RR}{\mathbb{R}}
\newcommand{\CC}{\mathbb{C}}
\newcommand{\NN}{\mathbb{N}}

\newcommand{\PP}{\mathbb{P}}
\def\del{\partial}
\newcommand{\calH}{\mathcal{H}}

\DeclareMathOperator{\Ric}{Ric}

\def\w{\wedge}
\def\o{\omega}

\newcommand{\ecal}{\mathcal{E}}
\newcommand{\hcal}{\mathcal{H}}

\newcommand{\lcal}{\mathcal{L}}\newcommand{\mcal}{\mathcal{M}}

\newcommand{\rcal}{\mathcal{R}}

  \def\calR{\rcal}
\def\calM{\mcal}  \def\calL{\lcal}
 \def\calH{\hcal} \def\H{\hcal}
  \def\calE{\ecal}

\def\a{\alpha} \def\be{\beta}
\def\o{\omega} 
 \def\Th{\Theta}
\def\vp{\varphi} \def\eps{\epsilon}

\def\K{K\"ahler } 
\def\KE{K\"ahler--Einstein } \def\KEno{K\"ahler--Einstein}
\def\KR{K\"ahler--Ricci }
\def\KRS{K\"ahler--Ricci soliton }

\def\Ric{\hbox{\rm Ric}\,} 

\def\ovpn{\o^n_{\vp}}

\def\h#1{\hbox{#1}}
%
\def\text{\textstyle}

\def\q{\quad} \def\qq{\qquad}

\def\PSH{\mathrm{PSH}}

\def\Aut{{\operatorname{Aut}}}

\def\Isom{{\operatorname{Isom}}}
\def\Cinf{C^\infty}

\def\h#1{\hbox{#1}}
\def\MA{Monge--Amp\`ere }

\def\bpf{\begin{proof}}
\def\epf{\end{proof}}
\def\beq{\begin{equation}}
\def\eeq{\end{equation}}
\def\beqno{\begin{equation*}}
\def\eeqno{\end{equation*}}
\def\eaeq{\end{aligned}}
\def\baeq{\begin{aligned}}

\newcommand\blfootnote[1]{%
  \begingroup
  \renewcommand\thefootnote{}\footnote{#1}%
  \addtocounter{footnote}{-1}%
  \endgroup
}

\def\Id{{\operatorname{Id}}}

\def\Ric{\hbox{\rm Ric}\,}

\def\ovp{\omega_{\varphi}}

\def\ovpn{\omega^n_{\varphi}}
\def\etan{\eta^n}
\def\on{\omega^n}

\def\Ho{\calH_\omega}

\def\fk{\mathfrak{k}}
\def\fz{\mathfrak{z}}
\def\tfk{\tilde{\mathfrak{k}}}
\def\fg{\mathfrak{g}}
\def\fa{\mathfrak{a}}
\def\fs{\mathfrak{s}}
\def\op{\oplus}

\def\lb{\label}

\def\Ent{\h{\rm Ent}}

\def\aut{\h{\rm aut}}
\def\AutMJ{\Aut(M,\JJJ)}
\def\AutMJz{\Aut(M,\JJJ)_0}
\def\autMJ{\aut(M,\JJJ)}

\def\E{\calE}

\def\vpt{\vp_t}
\def\ovpt{\o_{\vp_t}}

\def\beq{\begin{equation}}
\def\eeq{\end{equation}}
\def\beqno{\begin{equation*}}
\def\eeqno{\end{equation*}}
\def\eaeq{\end{aligned}}
\def\baeq{\begin{aligned}}

\def\bpf{\begin{proof}}
\def\epf{\end{proof}}

\def\a{\alpha} \def\be{\beta}
\def\o{\omega} 
 \def\Th{\Theta}
\def\vp{\varphi} \def\eps{\epsilon}

\def\ovp{{\o_{\vp}}}
\def\on{\omega^n}
\def\ovpn{\omega_{\vp}^n}

\def\Ric{\hbox{\rm Ric}\,}

\def\ovpn{\o^n_{\vp}}

\def\h#1{\hbox{#1}}

\def\strutdepth{\dp\strutbox}
\def\specialstar{\vtop to \strutdepth{
    \baselineskip\strutdepth
    \vss\llap{$\star$\ \ \ \ \ \ \ \ \  }\null}}
\def\marginalstar{\strut\vadjust{\kern-\strutdepth\specialstar}}

\def\marginal#1{\strut\vadjust{\kern-\strutdepth
    {\vtop to \strutdepth{
    \baselineskip\strutdepth
    \vss\llap{{ \small #1 }}\null}
    }}
    }

\def\etan{\eta^n}
\def\Cinf{C^\infty}
\def\CinfM{\Cinf(M)}
\def\JJJ{\h{\rm J}}
\def\JJJsml{\h{\notsosmall J}}
\def\isomsml{\h{\notsosmall isom}}
\def\isom{\h{\rm isom}}
\def\V{V^{-1}}
\def\fovpt{f_{\ovpt}}



\title{Tian's properness conjectures and Finsler geometry of the space of
K\"ahler metrics}

\author{Tam\'as Darvas and Yanir A. Rubinstein
\blfootnote{2010 Mathematics subject classification 53C55, 32W20, 32U05.}
}
\date{\vspace{-0.3in}}

\begin{document}

\maketitle
\begin{abstract}
Well-known conjectures of Tian predict that 
existence of canonical K\"ahler metrics should  be equivalent to 
various notions of properness of 
Mabuchi's K-energy functional.
In some instances this has been verified, especially under restrictive
assumptions on the automorphism group.
We provide counterexamples to the original conjecture in the presence
of continuous automorphisms. The construction hinges upon 
an alternative approach to properness that uses in an essential
way the metric completion with respect to a {\it Finsler } metric
and its quotients with respect to group actions.
This approach also allows us to formulate and prove 
new optimal versions of Tian's conjecture in the 
setting of smooth and singular K\"ahler--Einstein metrics,
with or without automorphisms, as well as for
K\"ahler--Ricci solitons. Moreover, we reduce both 
Tian's original conjecture (in the absence of automorphisms) 
and our modification of it
(in the presence of automorphisms) in the general case of constant scalar curvature
metrics to a conjecture on 
regularity of minimizers
of the K-energy in the Finsler metric completion.
Finally, our results also resolve Tian's conjecture on the
Moser--Trudinger inequality for Fano manifolds with
\KE metrics. 
\end{abstract}


\section{Introduction}

The main motivation for our work is Tian's properness conjecture.
Consider the space 
\begin{equation}
\label{HEq}
\textstyle\calH
=
\{\omega_\vp:=\o+\i\ddbar\vp \,:\, \vp\in C^{\infty}(M), \,  \omega_\vp>0\}
\end{equation}
of all \K metrics representing a fixed cohomology class  
on a compact \K manifold $(M,\JJJ,\o)$ 

Motivated by results in conformal geometry 
and the direct method in the calculus of variations,
in the 90's Tian  introduced the notion of ``properness on $\calH$"
\cite[Definition 5.1]{T1994}
in terms of the Aubin nonlinear 
energy functional $J$ \cite{Aubin84}  and the Mabuchi K-energy $E$ 
\cite{Mabuchi87} as follows.

\bdefin
The functional $E:\calH\ra\RR$ is said to be proper if 
\beq
\label{PropernessEq}
\forall\,\o_j\in\calH,\q
\lim_jJ(\o_j)\ra \infty 
\q\Longrightarrow\q
\lim_jE(\o_j)\ra \infty. 
\eeq
\edefin
\noindent
Tian made the following influential conjecture 
\cite[Remark 5.2]{T1994},
\cite[Conjecture 7.12]{Tianbook}.
Denote by $\AutMJz$ the identity component of the group of 
automorphisms of $(M,\JJJ)$, and denote by
$\aut(M,\JJJ)$ its Lie algebra, consisting of holomorphic vector fields.

\bconj
\lb{TianConj}
Let $(M,\JJJ,\o)$ be a compact \K manifold. 

\noindent
(i) If $\aut(M,\JJJ)=0$ then $\H$ contains a constant scalar curvature metric
if and only if $E$ is proper.

\smallskip
\noindent
(ii) Let $K$ be a maximally compact subgroup of $\AutMJz$. Then 
$\H$ contains a constant scalar curvature metric
if and only if $E$ is proper on the subset $\H^K\subset\H$
consisting of $K$-invariant metrics.
\econj

Tian's conjecture is central in \K geometry and has attracted
much work over the past two decades including motivating 
much work on equivalence between 
algebro-geometric notions of stability and existence of 
canonical metrics, as well as on the interface
of pluripotential theory and \MA equations. We refer to the surveys \cite{Thomas,PhongSturmSurvey,Tian2012,PhongSongSturm,R14}.

We provide counterexamples to Conjecture \ref{TianConj} (ii)---see Example \ref{MainExam} below. 
Perhaps more interesting than the examples themselves is the 
realization
that Tian's conjecture should be modified and phrased in terms of a Finsler structure on the space of \K metrics
and properties of its metric completion.
The metric completion approach turns out not only to be convenient but
indispensable. In fact, our results show that properness with respect
to the Finsler distance function {\it characterizes} existence of
canonical \K metrics in many cases. 
It allows us to simultaneously unify, extend, and give 
new proofs of a number of instances of Tian's conjecture and our modification
of it, as well as resolve some of the remaining open cases.
Moreover, we reduce the properness conjecture 
in the most general case of constant scalar curvature metrics
to a problem on minimizers of the K-energy in the Finsler
metric completion. 
Finally, our results 
immediately imply Tian's conjecture on the
Moser--Trudinger inequality for \KE Fano manifolds.
This is the \K geometry analogue of Aubin's strong Moser--Trudinger
inequality on $S^2$ in conformal geometry.

Approaching problems in \K geometry through 
an infinite-dimensional
{\it Riemannian} perspective goes back to Calabi in 1953 \cite{Calabi54} and 
later Mabuchi in 1986 \cite{Mabuchi87}. These works proposed two different 
weak Riemannian metrics of $L^2$ type which have been studied extensively
since. 
Historically, Calabi raised the question of computing the completion
of his metric, which suggested  a relation
between the existence of canonical metrics
on the finite-dimensional manifold $M$ 
and the metric 
completion of the infinite-dimensional space $\H$.
The first result in this spirit is due to
Clarke--Rubinstein \cite{ClarkeR}
who computed the Calabi metric completion, 
and proved the existence of K\"ahler--Einstein metrics on $M$ is equivalent
to the Ricci flow converging in the Calabi metric completion.
Confirming a conjecture of Guedj \cite{guedj}, 
the Mabuchi metric completion was computed recently in
\cite{da3} and in  \cite{da4} a corresponding result for the Ricci flow in the Mabuchi
metric completion was proved.
Other results include the work of Streets \cite{St}, who shows that one gains new insight on the long time behavior of the Calabi flow by placing it in the context of the abstract Riemannian metric completion of the Mabuchi metric. 
In Darvas--He \cite{dh}, the asymptotic behavior of the K\"ahler-Ricci flow in 
the metric completion is related to destabilizing geodesic rays.
We refer the reader to the survey \cite{R14} for more references. 

Perhaps surprisingly, 
a key observation of the present article is 
that not a Riemannian but rather a {\it Finsler} 
metric encodes the asymptotic behavior of essentially all 
energy functionals on $\calH$
whose critical points are precisely various types of canonical
metrics in \K geometry. 
In fact, as pointed out in Remark \ref{Dingd1Remark}, the same
kind of statement is in general false for the much-studied
Riemannian metrics of Calabi and Mabuchi.
The Finsler structure that we use was introduced in 
 \cite{da4} where its metric completion was computed.

\section{Results}

Much of the progress on Conjecture \ref{TianConj} has focused on 
the case of \KEno, \KE edge, or  \KR soliton metrics. 
In the setting of \KE metrics,
one direction of the conjecture (properness implies existence)
follows from work of Ding--Tian \cite{dt},
while the converse for Conjecture \ref{TianConj} (i) was established
by Tian \cite{Tian97} under a technical assumption that
was removed by Tian--Zhu \cite{TZ00}.
The result furnished the first 
`stability' criterion
equivalent to the existence of \KE metrics, in the
absence of holomorphic vector fields.

Our first result disproves Conjecture \ref{TianConj} (ii) 
already in the setting of \KE metrics, and establishes an optimal 
replacement for it. 
\begin{theorem}
\label{KEGexistenceIntroThm}
Suppose $(M,\JJJ,\o)$ 
is Fano and that $K$ is a maximal compact subgroup of $\AutMJz$ with $\o\in\H^K$.
The following are equivalent:

\smallskip
\noindent (i) 
There exists a K\"ahler--Einstein metric in $\mathcal H^K$
and $\AutMJz$ has finite center.

\noindent (ii) 
There exists $C,D>0$ such that $E(\eta) \geq CJ(\eta)-D, \ \eta \in \mathcal H^K$.  \end{theorem}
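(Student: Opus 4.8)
The plan is to prove the two implications separately, routing everything through the Finsler metric $d_1$ on $\mathcal H$ and its $K$-quotient, as advertised in the introduction. Write $\mathcal E_1$ for the $d_1$-metric completion of $\mathcal H$, $G=\Aut(M,\JJJ)_0$, and let $G$ act on $\mathcal H$ by pullback of potentials. The key structural fact I would invoke (from the cited work of the authors) is that $J$ is comparable, up to additive and multiplicative constants on $\mathcal H^K$, to the $d_1$-distance to a fixed orbit modulo $G$: more precisely, on the quotient completion $\mathcal E_1^K/G$ there is a complete metric $d_{1,G}$ with $d_{1,G}([\o],[\eta])\asymp J(\eta)+O(1)$ for $\eta\in\mathcal H^K$. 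Thus ``$E\ge CJ-D$ on $\mathcal H^K$'' is equivalent to ``$E$ (which is $G$-invariant) is $d_{1,G}$-proper on the quotient''.

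\smallskip
\noindent\emph{(i) $\Rightarrow$ (ii).} Suppose $\o_{KE}\in\mathcal H^K$ is K\"ahler--Einstein and $G$ has finite center. Since $M$ is Fano, $E$ differs from Ding's functional $\mathcal D$ by a nonnegative quantity (the relative entropy), so it suffices to prove the coercivity estimate for the Ding functional, which is a well-behaved convex function along $d_1$-geodesics. The first step is: along a finite-energy geodesic ray emanating from $\o_{KE}$, convexity of $\mathcal D$ together with the fact that $\o_{KE}$ is a minimizer forces $\mathcal D$ to be nondecreasing; the slope at infinity is the Donaldson--Tian--Futaki-type invariant, and one must show it is strictly positive except in the direction of $G$. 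This is where finiteness of the center of $G$ enters: the center of $\aut(M,\JJJ)$ consists of parallel holomorphic vector fields, which correspond exactly to the flat directions along which $\mathcal D$ has zero slope; if the center is finite these flat directions are exhausted by the compact part already quotiented out in passing to $\mathcal H^K$, so on $\mathcal H^K$ the slope is bounded below by a positive constant. One then upgrades this ``positivity of the slope in every direction'' to the linear lower bound $E\ge CJ-D$ by a standard compactness/contradiction argument in $\mathcal E_1$: if the estimate failed there would be $\eta_j\in\mathcal H^K$ with $J(\eta_j)\to\infty$ but $E(\eta_j)/J(\eta_j)\to 0$; normalize by the group action so that $d_1$ of $\eta_j$ to the orbit of $\o_{KE}$ is realized, extract a $d_1$-geodesic ray limit in $\mathcal E_1$, and use lower semicontinuity of $E$ and the computed slope to get a contradiction. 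I expect \emph{this compactness step}---extracting a geodesic ray and controlling $E$ and $J$ along it, after removing the $G$-ambiguity, using properness of the quotient distance---to be the main obstacle, since it requires the full strength of the completeness of $\mathcal E_1^K/G$ and the behavior of $E$ on it.

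\smallskip
\noindent\emph{(ii) $\Rightarrow$ (i).} Assume $E\ge CJ-D$ on $\mathcal H^K$. Coercivity of $E$ along $d_1$ means every $d_1$-minimizing sequence of $E$ in $\mathcal H^K$ stays in a $d_1$-bounded set; by completeness of $\mathcal E_1^K$ it subconverges to a minimizer $u_\star\in\mathcal E_1^K$. The next step is to show $u_\star$ is in fact a smooth K\"ahler--Einstein potential: here one uses that finite-energy minimizers of the Ding/K-energy in the Fano case are regular (an Euler--Lagrange / complex Monge--Amp\`ere regularity argument—this is exactly the ingredient that in the general cscK case the authors leave as a conjecture, but which is available in the \KE case). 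Finally, one must rule out the possibility that $G$ has infinite center: if the center were infinite, there would be a nontrivial one-parameter group of parallel holomorphic vector fields, and flowing $\o_{KE}$ along it produces a path in $\mathcal H^K$ (after checking $K$-invariance is preserved) along which $J\to\infty$ while $E$ stays constant—contradicting (ii). Hence $G$ has finite center and a \KE metric exists in $\mathcal H^K$, which is (i). The only delicate point here is confirming that the parallel-vector-field flow indeed stays inside $\mathcal H^K$ and genuinely sends $J$ to infinity, which follows because such fields centralize $K$ and because $J$ detects the non-compact direction of the orbit.
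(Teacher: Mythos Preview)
Your proposal has genuine gaps, and the overall strategy diverges from the paper's in ways that create problems.

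First, a structural issue: you propose to work on a quotient $\mathcal E_1^K/G$ with $G=\Aut(M,\JJJ)_0$, but $G$ does not act on $\mathcal H^K$ (an arbitrary automorphism need not preserve $K$-invariance). Statement (ii) is properness of $E$ with respect to $J$ on $\mathcal H^K$ \emph{without} any quotient. The paper's framework is the abstract existence/properness principle (Theorem~\ref{ExistencePrinc}) applied with the \emph{trivial} group $G=\{I\}$ and $\mathcal R=\mathcal H_0^K$; the content is then to verify (P5), i.e.\ that the minimizer is a singleton.

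Second, a conceptual error recurs in both directions: on a Fano manifold $b_1(M)=0$, so there are no nonzero parallel (equivalently harmonic) holomorphic vector fields. The center of $\aut(M,\JJJ)$ can nevertheless be nontrivial (as in Example~\ref{MainExam} with $G=(\CC^\star)^2$), but its elements are gradient fields, not parallel. Moreover, by Bando--Mabuchi the ``flat directions'' of the Ding/K-energy from a KE metric are \emph{all} of $\aut(M,\JJJ)$, not just the center, so your slope dichotomy does not single out the center.

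For (i)$\Rightarrow$(ii) the paper's route is group-theoretic: the finite center hypothesis, together with a theorem of Hazod et al.\ that $N_K(\AutMJz)/(K\cdot C_K(\AutMJz))$ is finite, gives $N_{\Isom(M,g_{\o_u})_0}(\AutMJz)=\Isom(M,g_{\o_u})_0$. Since any two KE metrics in $\mathcal H^K$ are related by some $f\in\AutMJz$, and one checks $f$ must normalize $K=\Isom(M,g_{\o_u})_0$, this forces $f\in K$ and hence uniqueness of the KE metric in $\mathcal H^K$. That is (P5) for the trivial group, and the principle yields $d_1$-properness on $\mathcal H_0^K$, equivalent to (ii) by Proposition~\ref{Jproperness}.

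For (ii)$\Rightarrow$(i), your idea of producing a KE minimizer is close in spirit (the paper uses the continuity method in $\mathcal H^K$), but your argument for finite center rests on the same ``parallel field'' misidentification. The paper instead argues: for any $g\in N_K(\AutMJz)$ one has $g.u\in\mathcal H_0^K$ and $F(g.u)=F(u)$, so (ii) bounds $d_1(u,g.u)$; via the Cartan-type map $C(k,X)=k\exp_I\JJJ X$ and the fact that $t\mapsto\exp_I t\JJJ X.u$ is a $d_1$-geodesic, this forces $N_K(\AutMJz)$ to be compact, hence equal to $K$ by maximality. The center of $\AutMJz$ then lies in $K$, but its Lie algebra is a complex subalgebra of $\isom(M,g_{\o_u})$, which by the Matsushima decomposition must be zero.
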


The estimate in (ii) gives a concrete version of the properness condition 
\eqref{PropernessEq}. The direction (i) $\,\Rightarrow\,$ (ii) is 
due to Phong et al. \cite[Theorem 2]{pssw}, 
building on earlier work of Tian \cite{Tian97} and Tian--Zhu \cite{TZ00}
in the case $\aut(M,\JJJ)=0$,
who obtained a weaker inequality in (ii) with $J$ replaced by $J^\delta$
for some $\delta\in(0,1)$ (for more details see
also the survey \cite[p. 131]{Tian2012}).

\bexample
\lb{MainExam}

Let $M$ denote the blow-up of $\PP^2$ at three non colinear points.
It is well-known that it admits \KE metrics \cite{Siu,TianYau}.
According to \cite[Theorem 8.4.2]{Dolg}, 
$$
\AutMJz=(\CC^\star)^2.
$$
In particular, $\AutMJz$ is equal to its center.
Thus, Conjecture \ref{TianConj} (ii) fails for $M$
by Theorem \ref{KEGexistenceIntroThm} 
(cf. \cite[Theorem 4.4]{Tian97}). 

\eexample

These results motivate a reformulation of Tian's original conjecture.
Albeit being a purely analytic criterion,
properness should be morally equivalent 
to properness in a metric geometry sense, namely, that the
Mabuchi functional should grow at least linearly relative to some metric
on $\calH$ precisely when a \KE metric exists in $\calH$.
Our goal in this article is to make this intuition rigorous.

To state our results we introduce some of the basic notions.
Recall \eqref{HoEq1}; the space of smooth strictly $\o$-plurisubharmonic 
functions (\K potentials)
\beq
\label{HoEq1}
\calH_\o 
:=
\{\vp\in C^{\infty}(M) \,:\, \,  \omega_\vp\in\calH\}
\end{equation} 
can be identified with $\H\times\RR$.
Consider the following weak 
Finsler metric on $\mathcal H_\o$ \cite{da4}:
\begin{equation}\label{FinslerDef1}
\|\xi\|_\vp:=  V^{-1}\int_M |\xi| \ovpn, 
\q  \xi \in T_\vp \mathcal{H}_\o=C^\infty(M).
\end{equation}
We denote by $d_1:\Ho\times\Ho\ra\RR_+$ the associated
path length pseudometric. According to \cite{da4}
it is a bona fide metric. By looking at level sets of the Aubin--Mabuchi energy, it is possible to embed $\mathcal H$ into $\mathcal H_\o$ (see \eqref{H0Eq}), giving a metric space $(\mathcal H, d_1)$.

Suppose $G$ is a subgroup of $\AutMJz$. We will prove that $G$ acts on $\mathcal H$ by $d_1$-isometries, hence induces a pseudometric on the orbit space $\H/G$,
$$
d_{1,G}(Gu,Gv):=\inf_{f,g\in G}d_1(f.u,g.v).
$$
Following Zhou--Zhu \cite[Definition 2.1]{zztoric} 
and Tian \cite[Definition 2.5]{Tian2012},\cite{Tian2014}, we also define 
the descent of $J$ to $\H/G$,
$$
J_G(Gu):=\inf_{g\in G}J(g.u).
$$

\bdefin
Let $F:\H\ra \RR$ be $G$-invariant.

\noindent $\bullet$ 
We say $F$ is $d_{1,G}$-proper if
for some $C,D >0$,
$$
F(u) \geq C d_{1,G}(G0,Gu) - D.
$$

\noindent $\bullet$  
We say $F$ is $J_{G}$-proper if
for some $C,D >0$,
$$
F(u) \geq C J_{G}(Gu) - D.
$$

\edefin

The following result complements Theorem \ref{KEGexistenceIntroThm} by giving 
yet another optimal replacement for Conjecture \ref{TianConj} (ii) 
in the setting of \KE metrics. The direction (iii) $\;\Rightarrow\;$ (i)
is due to Tian \cite[Theorem 2.6]{Tian2012}. 
In the case of toric Fano manifolds, a variant of the direction 
(i) $\,\Rightarrow\,$ (iii) is due to Zhou--Zhu 
\cite[Theorem 0.2]{zztoric}.

\begin{theorem} 
\label{KEexistenceIntroThm}
Suppose $(M,\JJJ,\o$) is Fano. Set $G:=\Aut(M,\JJJ)_0$.
The following are equivalent:

\smallskip
\noindent (i) 
There exists a K\"ahler--Einstein metric in $\mathcal H$.

\noindent (ii) 
$E$ is $G$-invariant and its descent to the quotient space 
$\calH/G$ is $d_{1,G}$-proper.

\noindent (iii) 
$E$ is $G$-invariant and its descent to the quotient space 
$\calH/G$ is $J_{G}$-proper.
\end{theorem}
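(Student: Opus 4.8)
The plan is to prove the cycle $(i)\Rightarrow(ii)\Leftrightarrow(iii)\Rightarrow(i)$. The implication $(iii)\Rightarrow(i)$ is Tian's theorem \cite[Theorem 2.6]{Tian2012} and can be quoted verbatim. The equivalence $(ii)\Leftrightarrow(iii)$ is soft: on $\calH$ the Finsler distance to the origin and the Aubin energy are bi-Lipschitz comparable away from a bounded set, i.e. $\tfrac1C J(u)-C\le d_1(0,u)\le C J(u)+C$ for a uniform $C>1$ --- a quantitative form of the comparison of the length structure \eqref{FinslerDef1} with $J$ worked out in \cite{da4} and its predecessors; since $G$ acts by $d_1$-isometries fixing $\calH$, taking the infimum over $g\in G$ on both sides yields $\tfrac1C J_G(Gu)-C\le d_{1,G}(G0,Gu)\le C J_G(Gu)+C$, so $d_{1,G}$-properness and $J_G$-properness of $E$ are equivalent up to adjusting the constants. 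One also needs $E$ to be $G$-invariant, otherwise its descents in $(ii)$ and $(iii)$ are meaningless (this is built into those hypotheses); from $(i)$ it is automatic, since a \KE metric forces $\AutMJz$ to be reductive (Matsushima) and the Futaki invariant of $-K_M$ to vanish, whence by the cocycle property of the K-energy $E\circ g-E$ is a constant equal to $0$ for every $g\in\AutMJz$.

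The substance is $(i)\Rightarrow(ii)$. Fix a \KE potential $\vp_0\in\calH$, taken $K$-invariant for a maximal compact $K\subset\AutMJz$. I work in the $d_1$-metric completion $(\E^1,d_1)$ of $(\calH,d_1)$ of \cite{da4}, to which $E$ extends (by results recalled there) as a $d_1$-lower semicontinuous functional that is convex along the finite-energy geodesics of $\E^1$; since $\vp_0$ is a critical point of $E$ on $\calH$, convexity along $\E^1$-geodesics issuing from $\vp_0$ gives $\inf_{\E^1}E=E(\vp_0)$. The first step is to identify the minimizer set $\mcal:=\{u\in\E^1:E(u)=E(\vp_0)\}$: its elements are weak \KE potentials, hence smooth by the regularity theory for the complex \MA equation, hence $\mcal=G.\vp_0$ by Bando--Mabuchi uniqueness; moreover $G.\vp_0$ is $d_1$-closed, being $d_1$-isometric to a complete Finsler symmetric space $K^{\CC}/K$ under the reductive structure. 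The second step is the coercivity estimate
\beq
\label{PlanCoercEq}
E(u)-E(\vp_0)\ \ge\ C\,d_{1,G}(Gu,G\vp_0)-D,\qquad u\in\E^1 ,
\eeq
from which $(ii)$ follows by the triangle inequality for the pseudometric $d_{1,G}$ together with $d_{1,G}(G0,G\vp_0)<\infty$.

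To prove \eqref{PlanCoercEq} I argue by contradiction. If it failed there would be $u_j\in\calH$ with $d_{1,G}(G\vp_0,Gu_j)\to\infty$ and $E(u_j)-E(\vp_0)\le\eps_j\,d_{1,G}(G\vp_0,Gu_j)$, $\eps_j\to0$; using $G$-invariance of $E$ we may replace each $u_j$ by a near-optimal $G$-translate, so that also $d_1(\vp_0,u_j)\le d_{1,G}(G\vp_0,Gu_j)+1$. Join $\vp_0$ to $u_j$ by a unit-speed $d_1$-geodesic $\gamma_j:[0,L_j]\to\E^1$ with $L_j\to\infty$; convexity of $E$ along $\gamma_j$ together with $E\ge E(\vp_0)$ gives $E(\gamma_j(t))-E(\vp_0)\le\eps_j t$ for $t\le L_j$. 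Using the compactness of the space of unit-speed $d_1$-geodesic rays emanating from a fixed point --- part of the Finsler theory of \cite{da4}, cf. \cite{dh} --- the $\gamma_j$ subconverge to a unit-speed $d_1$-geodesic ray $\gamma$ from $\vp_0$, and by $d_1$-lower semicontinuity of $E$ one gets $E\circ\gamma\equiv E(\vp_0)$, i.e. $\gamma\subset\mcal=G.\vp_0$; hence $d_{1,G}(G\vp_0,G\gamma(t))=0$ for every $t$. But the triangle inequality in the quotient and $d_{1,G}\le d_1$ give $d_{1,G}(G\vp_0,G\gamma_j(t))\ge d_{1,G}(G\vp_0,Gu_j)-(L_j-t)\ge t-1$, which passes to the limit as $d_{1,G}(G\vp_0,G\gamma(t))\ge t-1$, contradicting the previous line for $t>1$. (A more hands-on variant of \eqref{PlanCoercEq} replaces the ray-compactness step by the local estimate $E(u)-E(\vp_0)\ge c\,d_1(u,\mcal)^2$ near $\mcal$ --- coming from the spectral gap of the Lichnerowicz operator at $\o_{\vp_0}$ and $\|\xi\|_{L^1(V^{-1}\o_{\vp_0}^n)}\le\|\xi\|_{L^2(V^{-1}\o_{\vp_0}^n)}$ --- globalized by convexity of $E$ along the geodesics from $\mcal$.)

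The main obstacle is \eqref{PlanCoercEq}, and within it two points stand out. The first is the identification $\mcal=G.\vp_0$ with $G.\vp_0$ being $d_1$-closed: this packages several hard inputs --- Matsushima reductivity, Bando--Mabuchi uniqueness, regularity of weak \KE metrics --- plus the claim that the orbit is $d_1$-closed and isometric to a complete symmetric space. The second, genuinely new, point is the control of $d_1$-geodesics and of the space of geodesic rays under the $G$-action and in the quotient $\E^1/G$: one must know that geodesic rays behave well under limits, that near-optimal $G$-translates exist, and crucially that a non-trivial ray cannot degenerate into a ray lying inside a single $G$-orbit. This is exactly what the Finsler completion of \cite{da4} is engineered to provide, and, as flagged in Remark~\ref{Dingd1Remark}, it is also why the $L^1$-type Finsler metric, and not the $L^2$ Mabuchi metric, is the correct object here --- the analogous statements fail for the latter. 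A pervasive but minor nuisance is that $d_{1,G}$ is a priori only a pseudometric on $\E^1/G$, so every estimate has to be arranged to survive passage to infima over $G$.
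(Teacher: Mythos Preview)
Your overall architecture is right and tracks the paper closely: reduce (ii)$\Leftrightarrow$(iii) to the bi-Lipschitz comparison of $d_1(0,\cdot)$ and $J$ on $\calH_0$ (this is Proposition~\ref{Jproperness} and Lemma~\ref{JGPropernessLemma}), and for (i)$\Rightarrow$(ii) argue by contradiction via convexity of $E$ along finite-energy geodesics from a \KE point, identifying the minimizer set with $G.\vp_0$ through regularity (Theorem~\ref{BRM1Thm}) and Bando--Mabuchi. Two remarks are in order.

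\medskip
\noindent\textbf{The gap.} Your key step invokes ``compactness of the space of unit-speed $d_1$-geodesic rays emanating from a fixed point'' and attributes it to \cite{da4,dh}. No such result is proved there, and $(\E_1,d_1)$ is not locally compact, so there is no reason for a sequence of unit-speed geodesics to subconverge to a geodesic ray. As written this is a genuine hole. The fix is simple and is exactly what the paper does in the proof of Theorem~\ref{ExistencePrinc}: do not pass to a limit ray; fix a single time (the paper takes $t=1$) along each $\gamma_j$. The points $\gamma_j(1)$ lie at bounded $d_1$-distance from $\vp_0$ and have $E(\gamma_j(1))\to\inf E$ by convexity, so property \ref{p2} (the entropy/\h{\rm AM} compactness of Theorem~\ref{EntropyCompactnessThm}, packaged as Proposition~\ref{Ebetacompact}) yields a $d_1$-limit $w\in\calM=G.\vp_0$, and your triangle-inequality estimate $d_{1,G}(G\vp_0,G\gamma_j(1))\ge 1-o(1)$ then contradicts $d_{1,G}(G\vp_0,Gw)=0$ by $d_1$-continuity of $d_{1,G}$. (A minor side point: you assume $d_{1,G}(G\vp_0,Gu_j)\to\infty$, but the negation of properness only gives a sequence with $d_{1,G}\ge 1$ and ratio tending to zero; this is all you need.) Your ``hands-on variant'' via a spectral-gap estimate $E-E(\vp_0)\ge c\,d_1(\cdot,\calM)^2$ is not justified in the $d_1$-framework and should be dropped.

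\medskip
\noindent\textbf{A genuine difference, once patched.} After the fix above, your argument differs from the paper's in one interesting respect. The paper insists on \emph{exact} realization of the quotient distance, $d_1(\vp_0,u_j)=d_{1,G}(G\vp_0,Gu_j)$, which is property \ref{p6}; this is what forces the geodesic-descent Lemma~\ref{GeodDescentLemma} and is established through the partial Cartan decomposition of \S\ref{CartanSec} (Propositions~\ref{PartialCartanProp} and \ref{p6Prop}). You instead take near-optimal translates, $d_1(\vp_0,u_j)\le d_{1,G}(G\vp_0,Gu_j)+\eps_j$ with $\eps_j\to0$, and carry the $\eps_j$ through the triangle inequality. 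This works and entirely bypasses \ref{p6} and the Cartan-type structure theory, at no cost. In exchange, the paper's route yields the extra information that $(\calH/G,d_{1,G})$ is a genuine metric space when a \KE exists; your softer argument does not, but that is not needed for the theorem.
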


\begin{remark}
In both Theorems \ref{KEGexistenceIntroThm} 
and \ref{KEexistenceIntroThm} 
it is possible to replace $E$ with the Ding functional 
\cite{Ding1988}. See 
Theorems \ref{KEGexistenceThm} and \ref{KEexistenceThm}. 
\end{remark}
 
It is interesting to compare Theorem \ref{KEexistenceIntroThm}
with yet another---perhaps more familiar---notion of properness.
Denote by $\Lambda_1$ the real eigenspace of
the smallest positive eigenvalue of $-\Delta_\o$, and set
$$
\calH_\o^\perp:=\{\vp\in\H\,:\, \int \vp\psi\on=0,\;\forall \psi\in\Lambda_1\}.
$$
When $\o$ is \KEno, 
it is well-known that $\Lambda_1$ is in a one-to-one correspondence with
holomorphic gradient vector fields \cite{ga}.
Tian made the following conjecture in the 90's 
\cite[Conjecture 5.5]{Tian97},
\cite[Conjecture 6.23]{Tianbook},\cite[Conjecture 2.15]{Tian2012}.

\bconj
\lb{TianConj2}
Suppose $(M,\JJJ,\o)$ is Fano K\"ahler-Einstein.
Then for some $C,D>0$,
$$
E(\vp) \geq C J(\vp) - D, \qq  \vp \in \H_\o^\perp.
$$
\econj

Conjecture \ref{TianConj2} was originally 
motivated by results in conformal geometry related to 
the determination of the best constants in the 
borderline case of the Sobolev inequality.
By restricting to functions orthogonal to the first eigenspace of the Laplacian,
Aubin was able to improve the constant in the aforementioned inequality
on spheres \cite[p. 235]{Aubinbook}. This can be seen as the sort of coercivity
of the Yamabe energy occuring in the Yamabe problem,
and it clearly fails without the orthogonality assumption due to the presence
of conformal maps. Conjecture \ref{TianConj2} stands in clear analogy with
the picture in conformal geometry, by stipulating that coercivity of
the K-energy holds in `directions perpendicular to holomorphic maps.'
It can be thought of as a higher-dimensional fully
nonlinear generalization of the classical Moser--Trudinger inequality.

It is a rather simple 
consequence of the work of Bando--Mabuchi \cite{BM}
that when a \KE metric exists,
$J_G$-properness implies $J$-properness on $\H_\o^\perp$
\cite[Corollary 5.4]{Tian97},\cite[Lemma A.2]{zztoric},\cite[Theorem 2.6]{Tian2012}. 
Therefore, Theorem \ref{KEexistenceIntroThm} 
resolves Tian's conjecture.

\begin{corollary} 
\lb{TianConj2Cor}
Conjecture \ref{TianConj2} holds.
\end{corollary}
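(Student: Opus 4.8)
\medskip
\noindent\emph{Proof proposal.}
The plan is to deduce Corollary~\ref{TianConj2Cor} from Theorem~\ref{KEexistenceIntroThm} together with the classical reduction---going back to Bando--Mabuchi \cite{BM}---of $J$-properness on $\H_\o^\perp$ to $J_G$-properness of the descent of $E$ to the quotient $\H/G$. First I would observe that the hypothesis ``$(M,\JJJ,\o)$ is Fano K\"ahler--Einstein'' says precisely that $\o$ is itself a K\"ahler--Einstein metric in $\H$, so that condition~(i) of Theorem~\ref{KEexistenceIntroThm} holds with $G:=\AutMJz$. Hence condition~(iii) holds: $E$ is $G$-invariant and its descent to $\H/G$ is $J_G$-proper. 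Using $G$-invariance to identify the descent of $E$ with $E$ on representatives, this yields $C,D>0$ with
\beq
\label{JGproperStep}
E(\vp)\ \geq\ C\,J_G(G\vp)-D,\qquad \vp\in\H_\o.
\eeq

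The remaining point is a comparison of $J_G$ with $J$ along the transversal $\H_\o^\perp$: one needs $c_0,d_0>0$ with $J_G(G\vp)\geq c_0 J(\vp)-d_0$ for all $\vp\in\H_\o^\perp$, after which \eqref{JGproperStep} gives $E(\vp)\geq Cc_0 J(\vp)-(Cd_0+D)$ on $\H_\o^\perp$, which is exactly Conjecture~\ref{TianConj2}. For this comparison I would follow the arguments of \cite[Corollary~5.4]{Tian97}, \cite[Lemma~A.2]{zztoric}, \cite[Theorem~2.6]{Tian2012}. By Matsushima's theorem $G$ is reductive, $G=K^\CC$ for a maximal compact $K$ that fixes the K\"ahler--Einstein metric $\o$; by Bando--Mabuchi uniqueness the $G$-orbit of $\o$ is exactly the set of K\"ahler--Einstein metrics, and, via the identification of $\Lambda_1$ with holomorphic gradient vector fields, its tangent space at $\o$ is $\Lambda_1\oplus\RR$. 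Thus $\vp\perp\Lambda_1$ places $\o_\vp$ in a slice transverse to the K\"ahler--Einstein locus; the $K$-invariance of $\o$ reduces $g\mapsto J(g.\vp)$ to a function on the symmetric space $K\backslash G$, and the Bando--Mabuchi convexity of Aubin-type energies along its Cartan geodesics, together with properness on the slice, forces the infimum defining $J_G(G\vp)$ to be essentially attained within a neighbourhood of the identity of fixed size, where $J(g.\vp)$ and $J(\vp)$ are uniformly comparable up to an additive constant.

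Essentially all of the analytic content is carried by Theorem~\ref{KEexistenceIntroThm}; granting it, Corollary~\ref{TianConj2Cor} is almost immediate. The one place where care is genuinely needed is the \emph{uniformity} of $c_0,d_0$ over all of $\H_\o^\perp$---equivalently, ruling out that a K\"ahler--Einstein potential far from the origin happens to be $\Lambda_1$-orthogonal while having large $J$---and this is precisely what the global Bando--Mabuchi description of the K\"ahler--Einstein locus, as opposed to the infinitesimal slice picture alone, supplies.
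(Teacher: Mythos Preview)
Your proposal is correct and follows essentially the same route as the paper: invoke Theorem~\ref{KEexistenceIntroThm} to obtain $J_G$-properness of $E$, then use the Bando--Mabuchi-based comparison (cited in the paper as \cite[Corollary~5.4]{Tian97}, \cite[Lemma~A.2]{zztoric}, \cite[Theorem~2.6]{Tian2012}) to pass from $J_G$-properness to $J$-properness on $\H_\o^\perp$. Your added discussion of why the comparison is uniform is a reasonable elaboration of what those references contain, but the paper itself simply cites them and does not reprove this step.
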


In particular, this provides a new functional inequality on $S^2$.
This inequality seems different from Aubin's well-known inequality 
\cite[Theorem 6.70]{Aubinbook}, especially in view
of Sano's example \cite[Remark 1.1]{Sano}. It would be interesting
to compare it to \cite[Theorem 10.11]{R08}.

Motivated by Theorem \ref{KEexistenceIntroThm} we make the following conjecture.
In the case of \KE metrics, 
Tian \cite[p. 127]{Tian2012} already conjectured the equivalence 
of (i) and (iii). 

\bconj
\lb{MainConj}
Let $(M,\JJJ,\o$) be a compact \K manifold. Set $G:=\Aut(M,\JJJ)_0$.
The following are equivalent:

\smallskip
\noindent (i) 
There exists a constant scalar curvature metric in $\mathcal H$.

\noindent (ii) 
$E$ is $G$-invariant and its descent to the quotient space 
$\calH/G$ is $d_{1,G}$-proper.

\noindent (iii) 
$E$ is $G$-invariant and its descent to the quotient space 
$\calH/G$ is $J_{G}$-proper.
\econj

It would be interesting to
compare Conjecture \ref{MainConj} to
\cite[Conjecture 6.1]{c1}. We refer the reader 
to Remark \ref{Dingd1Remark}.

We reduce Conjecture \ref{MainConj} to a purely PDE problem
of regularity of minimizers. To phrase the result, we denote
by $\E_1$ the $d_1$-metric completion of $\H$.
We denote still by $E$ the greatest $d_1$-lower
semicontinuous extension of $E$ to $\E_1$.
We refer to 
Sections \ref{FinslerSec}--\ref{ActSec} for precise details.

\bconj
\lb{MainHardConj}
Minimizers of $E$ over $\E_1$ are smooth.
\econj

Conjecture \ref{MainHardConj} is inspired by previous work of many authors
and, as recalled in \S\ref{P3SubSec}, 
it is already known in the case of \KE (edge) metrics or \KR solitons
by combining previously known results.
Observe that a resolution of Conjecture \ref{MainHardConj} 
would also imply
\cite[Conjecture 6.3]{c1}.

The modified properness conjecture follows from the previous one.

\bthm
\lb{MainReducThm}
Conjecture \ref{MainHardConj}
implies
Conjecture \ref{MainConj}.
\ethm

We also establish sharp versions of the properness conjecture in the setting of \KE edge metrics and \KR solitons. Denote by $\Aut^X(M,\JJJ)_0$ the subgroup of $\Aut(M,\JJJ)_0$ defined in 
\eqref{AutMJXEq}. For a precise statement of the following
result see Theorem \ref{solitonKEexistenceThm}.

\begin{theorem} 
\label{KRSexistenceIntroThm}
Suppose $(M,\JJJ,\o$) is Fano and let $X\in\aut(M,\JJJ)$. 
Set $G:=\Aut^X(M,\JJJ)_0$.
Then a version of Theorem \ref{KEexistenceIntroThm} holds
both for the Tian--Zhu modified K-energy $E^X$ and 
for the modified Ding functional $F^X$.
\ethm

A version of Theorem \ref{KRSexistenceIntroThm} in the absence
of holomorphic vector fields is due to \cite{ctz}.

Denote by $\Aut(M,D,\JJJ)_0$ the subgroup of $\Aut(M,\JJJ)_0$ defined in \eqref{AutMDEq}.
For a precise statement of the following
result see Theorem \ref{KEEexistenceThm}.

\begin{theorem} 
\label{KEEexistenceIntroThm}
Suppose $(M,\JJJ,\o$) is a compact K\"ahler manifold and a smooth divisor $D\subset M$
satisfying $c_1(M)-(1-\be)[D]=[\o]$ for some $\be\in(0,1)$.
Set $G:=\Aut(M,D,\JJJ)_0$.
Then a version of Theorem \ref{KEexistenceIntroThm} holds
both for the twisted K-energy $E^\be$ and 
for the twisted Ding functional $F^\beta$.
\ethm

In the absence of holomorphic vector fields, i.e., when $G$ is trivial, 
some versions
of Theorem \ref{KEEexistenceIntroThm} exist in the literature.
The direction (iii) $\;\Rightarrow\;$ (i) is due to 
\cite[Theorem 2]{JMR}.
A version of the direction (i) $\;\Rightarrow\;$ (iii)
in the special case of $D$ plurianticanonical on a Fano manifold
(which implies the triviality of $G$) is due to \cite[Proposition 3.6]{cds},
\cite[Corollary 2.2]{T15},\cite[Theorem 0.1]{TZ15}.

\bremark
Versions of Corollary \ref{TianConj2Cor} for \KE edge metrics and \KR
solitons also follow from our work. We omit the statements for brevity.
\eremark

\subsection{Sketch of the proofs}

As already noted, much work has gone into showing different versions of Tian's conjecture. 
These works are mostly based on the continuity method,
the Ricci flow and the $J$-flow. 
To cite a few papers from a rapidly growing literature, 
we mention \cite{ctz,pssw,SW,CJflow, SWJflow, RJFA,zztoric,zz} and references therein.

Perhaps one of the main thrusts of the present article is that it is considerably more powerful to use the Finsler geometry of $\H$ and various of its subspaces to treat in a unified manner essentially all instances of the properness conjectures. In this spirit, we state a completely general 
existence/properness principle
(Theorem \ref{ExistencePrinc}).
This principle is stated in terms of four pieces of data: the space of regular candidates
$\calR$, a metric $d$ on it, a functional $F$ defined on $\calR$, and a group
$G$ acting on  $\calR$.
The data is assumed to satisfy four axioms, or conditions, that we denote by \ref{a1}--\ref{a4};
we refer the reader to Notation \ref{MainNot}.
Roughly stated, the existence principle guarantees $F$ is $d_G$-proper on $\calR$
if and only if the data satisfies seven additional properties that we denote by
\ref{p1}--\ref{p7}; we refer the reader to Hypothesis \ref{MainHyp}.

Property \ref{p1} guarantees $F$ is convex along `sufficiently many'
$d$-geodesics, and is inspired by the work of Berndtsson \cite{brn}.
Property \ref{p2} is a compactness requirement for minimizing sequences,
and is inspired by the work of Berman et al. \cite{bbegz,bbgz}.
Property \ref{p3} is a regularity requirement for minimizes, and is inspired by the work of Berman and Berman--Witt-Nystr\"om \cite{brm1,bwn}.
Property \ref{p4} stipulates that $G$ act by $d$-isometries.
Property \ref{p5} requires that $G$  act transitively on the set of minimizes,
and is inspired by the classical work of Bando--Mabuchi and its recent generalizations by Berndtsson and others. 
Property \ref{p6} seems to be a new ingredient. It requires
that in the presence of minimizers the pseudometric $d_G$  is realized by elements in each orbit.
Property \ref{p7} is standard and requires the `cocycle' functional associated to $F$ to be $G$-invariant.

Section \ref{GenSec} is devoted to the proof of the existence principle. Section
\ref{FinslerSec}
recalls necessary preliminaries concerning the Finsler geometry from \cite{da4}. Section \ref{ActSec} is rather lengthy and studies basic properties of energy functionals relative to the Finsler metric completion. In particular we verify condition \ref{a2} and properties \ref{p3} and
\ref{p4} in the particular cases of interest. Section \ref{CartanSec} is devoted to the proof of property \ref{p6}. 
A technical input here is a partial Cartan decomposition
(Proposition \ref{PartialCartanProp}). Finally, in the remaining sections we prove various instances of the properness conjecture. Section 
\ref{KESec} contains the proof of results
containing Theorems 
\ref{KEGexistenceIntroThm} and \ref{KEexistenceIntroThm}. 
Section \ref{KRSSec} contains the proof of Theorem
\ref{KRSexistenceIntroThm}.
 Section \ref{KEESec} contains the proof of Theorem  \ref{KEEexistenceIntroThm}. 
 Section \ref{cscSec} contains the proof of Theorem \ref{MainReducThm}.

\section{A general existence/properness principle}
\lb{GenSec}

\bnot
\label{MainNot}
The data $(\mathcal R,d,F,G)$ is defined as follows.

\begin{enumerate}[label = (A\arabic*)]
  \item\label{a1} $(\mathcal R,d)$ is a metric space with a special element $0\in\mathcal R$,
        whose metric completion is denoted $(\overline{\mathcal R},d)$.
  \item\label{a2} $F : \mathcal R \to \Bbb R$ 
is lower semicontinuous (lsc). Let  
        $F: \overline{\mathcal R} \to \Bbb R \cup \{ +\infty\}$ 
        be the largest lsc extension
        of $F: \mathcal R \to \Bbb R$:
$$F(u) = \sup_{\varepsilon > 0 } \bigg(\inf_{\substack{v \in \mathcal R\\ d(u,v) \leq \varepsilon}} F(v) \bigg), \ \ u \in \overline{ \mathcal R}.$$

For each 
$u,v\in{\mathcal R}$ define also
$$
F(u,v):=F(v)-F(u).
$$
  \item\label{a3} 
The set of minimizers of $F$ on $\overline{\mathcal R}$ is denoted
$$
\mathcal M:= 
\Big\{ u \in \overline{\mathcal R} \ : \ F (u)= 
\inf_{v \in \overline{\mathcal R}} F(v)
\Big\}.
$$
  \item\label{a4} Let $G$ be a group  
  acting on ${\mathcal R}$ by 
  $G\times{\mathcal R}\ni(g,u) \to g.u\in {\mathcal R}$. Denote by
${\mathcal R} /G$ the orbit space, by $Gu\in{\mathcal R} /G$ the 
  orbit of $u\in{\mathcal R}$, 
  and~define~$d_G:{\mathcal R}/G\times {\mathcal R}/G\ra\RR_+$ by
$$
d_G(Gu,Gv):=\inf_{f,g\in G}d(f.u,g.v).
$$
\end{enumerate}
\enot

\bhyp
\label{MainHyp}
The data $(\mathcal R,d,F,G)$
satisfies the following properties.

\begin{enumerate}[label = (P\arabic*)]
  \item\label{p1}
 For any $u_0,u_1 \in \mathcal R$ there exists a $d$--geodesic segment $[0,1] \ni t \mapsto u_t \in \overline{\mathcal R}$ for which
 $t\mapsto F(u_t) \textup{ is continuous and convex on }[0,1].$
  \item \label{p2} 
If  $\{u_j\}_j\subset \overline{\mathcal R}$  satisfies 
$\lim_{j\ra\infty}F(u_j)= \inf_{\overline{\mathcal R}} F$, and
for some $C>0$, $d(0,u_j) \leq C$ for all $j$, then there exists a $u\in \mathcal M$ and a subsequence $\{u_{j_k}\}_k$ 
$d$-converging to $u$.
  \item\label{p3} 
  $\mathcal M \subset \mathcal R.$
  \item\label{p4} $G$ acts on ${\mathcal R}$ by $d$-isometries.
  \item \label{p5} $G$ acts on $\mathcal M$ transitively.
  \item \label{p6} If $\mathcal M \neq \emptyset$,  
then for any $u,v \in \mathcal R$ there exists $g \in G$
such that $d_G(Gu,Gv)=d(u,g.v)$.

  \item \label{p7} For all $u,v \in \mathcal R$ and $g \in G$,
$F(u,v)=F(g.u,g.v)$.

\end{enumerate}

\ehyp

We make two remarks. 
First, by \ref{a2}, 
\begin{equation}
\label{InfRealization}
\inf_{v \in \overline{\mathcal R}} F(v)
=
\inf_{v \in \mathcal R} F(v).
\end{equation}
Second, thanks to \ref{p4} and the next lemma,
the action of $G$, originally defined on $\calR$ in \ref{a4}, 
extends to an action of $G$ by $d$-isometries on the metric completion $\overline{\calR}$.

\begin{lemma}
\label{LipschitzExt} 
Let $(X,\rho)$ and $(Y,\delta)$ be two complete metric spaces, $W$ a dense subset of $X$ and $f:W \to Y$ a $C$-Lipschitz function, i.e.,
\begin{equation}\label{lipineq}
\delta(f(a),f(b)) \leq C \rho(a,b), \q \forall\, a,b \in W.
\end{equation}
Then $f$ has a unique $C$-Lipschitz continuous extension 
to a map $\bar f:X\ra Y$.
\end{lemma}
\begin{proof}Let $w_k \in W$ be a Cauchy sequence converging to some $w \in X$. Lipschitz continuity gives
$$\delta(f(w_k),f(w_l)) \leq C \rho(w_k,w_l),$$
hence $\bar f(w) := \lim_k f(w_k) \in Y$ is well defined and independent of the choice of approximating sequence $w_k$. Choose now another Cauchy sequence $z_k \in W$ with limit $z \in X$, plugging in $w_k,z_k$ in \eqref{lipineq} and taking the limit gives that $\bar f: X \to Y$ is $C$-Lipschitz continuous.  
\end{proof}

The following result will provide the framework that relates existence of canonical \K metrics
to properness of functionals with respect to the Finsler metric.

\begin{theorem}
\label{ExistencePrinc}
Let $(\mathcal R,d,F,G)$ be as in Notation \ref{MainNot}
and satisfying Hypothesis \ref{MainHyp}.
The following are equivalent:
\begin{itemize}
\item[(i)] $\mathcal M$ is nonempty.
\item[(ii)]$F:{\mathcal R}\ra \RR$ 
is $G$-invariant, and
for some $C,D>0$,
\begin{equation}\label{Dproperness}
    F(u) \geq Cd_G(G0,Gu)-D,\q \h{for all\ } u \in \mathcal R.
\end{equation}
\end{itemize}
\end{theorem}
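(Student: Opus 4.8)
The plan is to prove the two implications separately, using the existence principle's axioms and properties in a modular way.

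\emph{Direction (ii) $\Rightarrow$ (i).} Assume $F$ is $G$-invariant and satisfies the properness estimate \eqref{Dproperness}. I would pick a minimizing sequence $\{u_j\}\subset\mathcal R$ with $F(u_j)\to\inf_{\mathcal R}F=\inf_{\overline{\mathcal R}}F$ (using \eqref{InfRealization}). First I need to replace $u_j$ by a $d$-bounded minimizing sequence. By $G$-invariance of $F$ we have $F(g.u_j)=F(u_j)$ for all $g\in G$, so for each $j$ I can choose $g_j\in G$ with $d(0,g_j.u_j)$ within $1$ of $d_G(G0,Gu_j)$. Setting $v_j:=g_j.u_j$, the estimate \eqref{Dproperness} gives $d(0,v_j)\le d_G(G0,Gu_j)+1 \le (F(u_j)+D)/C+1$, which is bounded since $F(u_j)$ converges. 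Now $\{v_j\}$ is a $d$-bounded minimizing sequence in $\overline{\mathcal R}$, and \ref{p2} produces a subsequence $d$-converging to some $u\in\mathcal M$, so $\mathcal M\neq\emptyset$.

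\emph{Direction (i) $\Rightarrow$ (ii).} This is the substantive direction and I expect it to be the main obstacle. Assume $\mathcal M\neq\emptyset$; fix $u_{\min}\in\mathcal M$, which by \ref{p3} lies in $\mathcal R$. The $G$-invariance of $F:\mathcal R\to\RR$ should follow from \ref{p7} combined with \ref{p5}: for $u\in\mathcal R$ and $g\in G$, write $F(u)-F(u_{\min})=F(u_{\min},u)$ and $F(g.u)-F(u_{\min})=F(u_{\min},g.u)$; since $G$ acts transitively on $\mathcal M$ and $u_{\min}\in\mathcal M$, we have $g^{-1}.u_{\min}=:u_{\min}'\in\mathcal M$, so $F(u_{\min}')=F(u_{\min})$, and \ref{p7} gives $F(u_{\min},g.u)=F(g^{-1}.u_{\min},u)=F(u_{\min}',u)=F(u_{\min})+ (F(u)-F(u_{\min}'))-F(u_{\min})$... more cleanly: $F(g.u)=F(u_{\min})+F(u_{\min},g.u)=F(u_{\min})+F(g^{-1}u_{\min},u)=F(u_{\min})+F(u_{\min}',u)=F(u_{\min})+F(u)-F(u_{\min}')=F(u)$, using $F(u_{\min}')=F(u_{\min})$. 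For the properness estimate, fix $u\in\mathcal R$. By \ref{p6} (applicable since $\mathcal M\neq\emptyset$) there is $g\in G$ with $d_G(G0,Gu)=d(0,g.u)$; replacing $u$ by $g.u$ and using $G$-invariance of $F$, it suffices to show $F(u)\ge C d(0,u)-D$ whenever $d(0,u)=d_G(G0,Gu)$. Also pick $g'\in G$ with $d(u_{\min}',u)$ realizing (a version of) $d_G(Gu_{\min},Gu)$ if needed; the key geometric input is to run the $d$-geodesic from $u_{\min}$ (or from $0$) to $u$ furnished by \ref{p1}, along which $t\mapsto F(u_t)$ is convex and continuous. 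Since $u_{\min}$ is a minimizer, convexity forces $t\mapsto F(u_t)$ to be nondecreasing along the geodesic emanating from $u_{\min}$, and the convex function lies above its chord / above the tangent line at $0$; quantitatively one extracts a linear-growth lower bound $F(u)-\inf F\ge c\,d(u_{\min},u)$ for some slope $c>0$ \emph{provided} the slope at the minimizer is positive — but it may be zero. The standard fix, which I would follow, is: convexity plus the minimizing property gives $F(u_t)\le (1-t)F(u_{\min})+tF(u_1)$, hence at least $F(u)=F(u_1)\ge F(u_{\min})$, which alone is not linear growth. To get true linearity one uses that the geodesic can be extended or that $d(0,u)$ and $d(u_{\min},u)$ differ by a bounded amount (triangle inequality: $d(0,u)\le d(0,u_{\min})+d(u_{\min},u)$), reducing to a lower bound in terms of $d(u_{\min},u)$; then the argument of Darvas--Rubinstein is to compare with the geodesic ray and use that $F$ restricted to $\overline{\mathcal R}$, being convex along geodesics with a minimum at $u_{\min}$, together with \emph{compactness from} \ref{p2}, cannot be sublinear — if it were, one could produce a $d$-bounded-at-scale minimizing-type sequence contradicting uniqueness-up-to-$G$ of minimizers coming from \ref{p5}. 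Concretely: suppose no such $C,D$ exist; then there are $u_j\in\mathcal R$ with $F(u_j)<\tfrac1j d_G(G0,Gu_j)-j$... here I'd instead argue along the lines: put $R_j:=d_G(G0,Gu_j)\to\infty$ (if $R_j$ bounded, $F(u_j)$ bounded below trivially by lsc + \ref{p2}-type compactness, contradiction handled separately), use \ref{p6} to assume $d(0,u_j)=R_j$, let $t_j:=1/R_j$ and set $w_j:=(u_j)_{t_j}$ the point at unit distance from $0$ along the geodesic to $u_j$ given by \ref{p1}; convexity gives $F(w_j)\le (1-t_j)F(0)+t_j F(u_j)\le F(0)+t_j(F(u_j)-F(0))$, and if $F(u_j)\le \eps_j R_j$ with $\eps_j\to0$ this forces $\limsup F(w_j)\le F(0)$, while $d(0,w_j)=1$; so $\{w_j\}$ is $d$-bounded with $F(w_j)$ bounded above, hence by lsc and a diagonal/compactness argument (again \ref{p2} applied after noting $\inf F$ is attained) a subsequence converges, and one derives a contradiction with the minimizing value unless the slope is genuinely positive. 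I would present this last step carefully, as it is where all of \ref{p1}, \ref{p2}, \ref{p5}, \ref{p6} are genuinely used together, and it is the crux of the whole principle.

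\emph{Main obstacle.} The delicate point is extracting a \emph{uniform linear} lower bound (not merely $F\ge\inf F$) from geodesic convexity plus a minimizer: convexity alone gives boundedness below but the slope at the minimizer can vanish. The resolution must combine the compactness property \ref{p2} (to rule out a sublinear escaping sequence), the transitivity \ref{p5} (so that "the" minimizer is essentially unique mod $G$, preventing flat directions other than the $G$-orbit), and \ref{p6} (to align $d_G$-distance with honest $d$-distance from $0$ so the geodesics of \ref{p1} are usable). I would organize the hard direction as a proof by contradiction built on a unit-speed rescaling of geodesics toward a would-be sublinear minimizing family, exactly as sketched above.
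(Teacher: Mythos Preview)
Your overall architecture is essentially the paper's: the easy direction is the same, and for (i) $\Rightarrow$ (ii) you correctly argue $G$-invariance from \ref{p3}, \ref{p5}, \ref{p7}, then set up a contradiction via unit-speed rescaling of geodesics and invoke \ref{p2}. However, two concrete gaps remain in the hard direction.

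\textbf{First gap: the geodesic must start at a minimizer, not at $0$.} In your concrete argument you run geodesics from $0$ to $u_j$ and obtain only $\limsup_j F(w_j)\le F(0)$. But \ref{p2} as stated applies only to sequences with $F(w_j)\to\inf_{\overline{\mathcal R}}F$, not merely to $d$-bounded sequences with bounded $F$-values; so you cannot extract a subsequential limit in $\mathcal M$ from what you have. The paper runs the geodesic from a minimizer $v\in\mathcal M\subset\mathcal R$ (using \ref{p3}) to $u(k)$ chosen via \ref{p6} so that $d(v,u(k))=d_G(Gv,Gu(k))\ge 1$; then convexity together with minimality at $v$ gives the two-sided bound
\[
0\le F(u(k)_1)-F(v)\le \frac{F(v,u(k))}{d(v,u(k))}\to 0,
\]
so $u(k)_1$ is genuinely $F$-minimizing and \ref{p2} applies. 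Your parenthetical ``from $u_{\min}$ (or from $0$)'' suggests you sensed this, but the written argument uses $0$ and does not go through.

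\textbf{Second gap: the contradiction step needs a geodesic descent lemma.} After \ref{p2} yields $u(k)_1\to_d \tilde v\in\mathcal M$ and \ref{p5} gives $\tilde v=f.v$, you must actually derive a contradiction; saying ``one derives a contradiction with the minimizing value'' is not enough, because $d(v,\tilde v)=1$ with $\tilde v\in G.v$ is perfectly consistent with the hypotheses. The missing ingredient is: since $d_G(Gv,Gu(k))=d(v,u(k))$ at the endpoints (by the choice made via \ref{p6}), the geodesic $t\mapsto u(k)_t$ \emph{descends} to a $d_G$-geodesic with the same speed, so in particular $d_G(Gv,Gu(k)_1)=d(v,u(k)_1)=1$. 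Then
\[
d(f.v,u(k)_1)\ge d_G(Gv,Gu(k)_1)=1,
\]
while $d(f.v,u(k)_1)=d(\tilde v,u(k)_1)\to 0$, the desired contradiction. This elementary descent fact (if $d_G=d$ at the endpoints of a $d$-geodesic then $d_G=d$ along the whole geodesic) is Lemma \ref{GeodDescentLemma} in the paper and is where \ref{p6} does its real work; your sketch never isolates it.
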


\bremark
The $G$-invariance condition can be considered as 
a version of the Futaki obstruction \cite{Fut}.
\eremark

\bpf

\noindent
{\it (ii)\ $\Rightarrow$\ (i)}. If condition (ii) holds, then  $F$ is bounded from below. 
By \eqref{InfRealization}, \eqref{Dproperness}, 
the $G$--invariance of $F$ and the definition of $d_G$ there 
exists $u_j \in \mathcal R$ such that 
$\lim_{j}F(u_j) = \inf_{\overline{\mathcal R}} F$ 
and $d(0,u_j) \leq d_G(G0,Gu_j) + 1<C$ for $C$ independent of $j$. 
By \ref{p2}, $\mathcal M$ is non-empty.

\medskip

\noindent
{\it (i)\ $\Rightarrow$\ (ii)}.
We start with a standard lemma.

\blem
\noindent
(i) If  \ref{p4} holds, 
$(\overline{\mathcal R}/G,d_G)$ and $(\mathcal R/G,d_G)$ are pseudo-metric spaces.

\noindent
(ii) If $\mathcal M \neq \emptyset$, \ref{p4} and \ref{p6} hold, 
$(\mathcal R/G,d_G)$ is a metric space.

\elem

\bpf
(i) It is enugh to show that $(\overline{\mathcal R}/G,d_G)$ is a pseudo-metric space. Using \ref{p4} and the fact that $d$ is symmetric,
$$
d_G(Gu,Gv):=\inf_{f,g\in G}d(f.u,g.v)=\inf_{h\in G}d(u,h.v)
=\inf_{h\in G}d(h.v,u)=d_G(Gv,Gu).
$$
Thus, $d_G$ is symmetric.

Since $d$ is nonnegative, 
given $u,v,w \in \overline{\mathcal R}$ and $\epsilon >0$, there exist $f,g \in G$ such that 
$d_G(Gu,Gw) > d(f.u,w)-\epsilon$ 
and $d_G(Gv,Gw)> d(g.v,w)-\epsilon$. 
The triangle inequality for $d$ and \ref{p4} give
\begin{flalign*}
d_{G}(Gu,Gv)&\leq 
d(u,f^{-1}g.v)=d(f.u,g.v) \cr
&\leq d(f.u,w)+d(g.v,w) < 2\varepsilon+d_{G}(Gu,Gv) + d_{G}(Gv,Gw).
\end{flalign*}
Letting $\eps$ tend to zero shows $d_G$ satisfies the triangle inequality.
Thus $d_G$ is a pseudo-metric.

\medskip

\noindent
(ii)
Since $d$ is nonnegative so is $d_G$. Now, let $u,v \in \mathcal R$ satisfy $d_G(Gu,Gv)=0$. 
By \ref{p6}, 
$d(u, f.v)=0$
for some $f \in G$. Since $d$ is a metric, $u = f.v$, hence $Gu=Gv$.
\epf

A geodesic in $(\overline{\mathcal R},d)$ need not
descend to a geodesic in $(\overline{\mathcal R}/G,d_G)$. Even when a geodesic does
 descend, its speed may not be the same in
the quotient space. A simple example in our cases of interest is  a one-parameter subgroup of $G$ acting on a fixed element 
$u\in \mathcal R$, whose descent is a trivial geodesic.

The next lemma gives a criterion for when a geodesic in 
$(\overline{\mathcal R},d)$ descends to a geodesic in $(\overline{\mathcal R}/G,d_G)$
with the same speed.

\blem
\label{GeodDescentLemma}
 Suppose that  $\mathcal M \neq \emptyset$, \ref{p4} and \ref{p6} hold. 
Let $u_0,u_1 \in \mathcal R$ satisfy 
$d_G(Gu_0,Gu_1)=d(u_0,u_1)$, and let 
$\{u_t\}_{t\in[0,1]} \subset \overline{\mathcal R}$ be a $d$-geodesic connecting 
$u_0$ and $u_1$. 
Then, $\{Gu_t\}_{t\in[0,1]} \subset \overline{\mathcal R}/G$ is a $d_G$-geodesic 
satisfying
$$
d_G(Gu_a,Gu_b)=d(u_a,u_b)=|b-a|d(u_0,u_1), \q \forall\, a,b \in [0,1].
$$
\elem

\begin{proof} 
Since $d(u_0,u_a)+ d(u_a,u_b)+d(u_b,u_1)=d(u_0,u_1)$ we can write
$$
\baeq
d_{G}([u_0],[u_1]) 
&\leq d_{G}([u_0],[u_a])+ d_G([u_a],[u_b])+d_{G}([u_b],[u_1]) 
\cr
&\leq d(u_0,u_a)+ d(u_a,u_b)+d(u_b,u_1)
\cr
&=d(u_0,u_1)=d_G([u_0],[u_1]).
\eaeq
$$
Hence, there is equality everywhere, so $d_{G}([u_a],[u_b])=d(u_a,u_b)=|a-b|d(u_0,u_1)$.
\end{proof}

\blem
\label{GinvLemma}
Suppose that (i) holds. Then $F:\mathcal R \to \Bbb R$ is $G$--invariant. 
\elem

\bpf
By assumption, $\mathcal M$ is nonempty. Let $v\in\mathcal M$.
By \ref{p3}, $v\in\mathcal R$.
By \ref{p4}, 
$f.v \in \mathcal M$ for any $f \in G$. 
Thus, $F(v)=F(f.v)$. 
By \ref{a3}, subtracting $F(u)$ from both sides,
$F(u,v)=F(u,f.v)$
for any $u\in \overline{\mathcal R}$. By \ref{p7},
$F(u,v)=F(f^{-1}.u,v)$, so 
adding $F(v)$ to both sides yields that
$F(u)=F(f.u)$ for every $f\in G$.
\epf

By the above lemma it makes sense to introduce $F_G:\mathcal R/G \to \Bbb R$, the descent of $F$ to the quotient $\mathcal R/G$. Let 
$v \in \mathcal M \subset \mathcal R$. Define, 
$$
C:= 
\inf 
\bigg\{
\frac{F_G(Gv,Gu)}{d_G(Gv,Gu)}
\,:\, u \in \mathcal R,\; d_G(Gv,Gu) \geq 1\bigg\}.
$$
If $C>0$, then we are done. Suppose  $C=0$. Then by \ref{p4} there exists $u(k) \in \mathcal R$ such that $F_G(Gv,Gu(k))/{d_G(Gv,Gu(k))} \to 0$ and 
$d(v,u(k))=d_G(Gv,Gu(k)) \geq 1$. 
By Lemma \ref{GinvLemma}, in fact
$F_G(Gv,Gu)=F(v,u)$. Thus, 
$$
\frac{F(v,u(k))}{d(v,u(k))} \to 0.
$$
Using \ref{p1}, 
let $[0,d(v,u(k))] \ni t \mapsto u(k)_t \in \overline{\mathcal R}$ be a unit 
speed $d$-geodesic connecting $u(k)_0=v$ and $u(k)_{d(v,u(k))}=u(k)$ 
such that $t \mapsto F(u(k)_t)$ is convex. 
As $v$ is a minimizer of $F$, by convexity we obtain 
\beq
\label{ConvConseqEq}
0 \leq 
F(u(k)_1) -F(v)\leq  
\frac{F(v,u(k))}{d(v,u(k))} 
\to 0.
\eeq
As $d(v,u(k)_1)=1$, \ref{p2} and \eqref{ConvConseqEq} imply that after perhaps passing to a subsequence of $u(k)_1$ we have $d(u(k)_1,\tilde v) \to 0$ for some $\tilde v \in \mathcal M$ . By 
\ref{p5}, $\tilde v=f.v$ for some $f \in G$. 
Now, 
\beq
\label{TriIneqInKEq}
0=d(f.v,\tilde v) \geq d(f.v,u(k)_1)-d(u(k)_1,\tilde v).
\eeq
By Lemma \ref{GeodDescentLemma},
$d_G(Gv,Gu(k)_1)=d(v,u(k)_1)=1$.
Thus, $d(f.v,u(k)_1) \geq d_G(Gv,Gu(k)_1)=1$.
Since $d(u(k)_1,\tilde v) \to 0$, 
it follows that 
$d(f.v,\tilde v) \geq 1$,
a contradiction with \eqref{TriIneqInKEq}.
Thus (ii) holds, 
concluding the proof of Theorem \ref{ExistencePrinc}.
\epf

\begin{remark} 
\lb{MainThmRemark}
(i) The first direction  
in the above proof only uses the compactness condition~\ref{p2}.

\noindent
(ii)
By density, equation \eqref{Dproperness}  is in fact equivalent to
\begin{equation}\label{DGinvproperness}
    F(u) \geq Cd_G(G0,Gu)-D,\q \h{for all\ } u \in \overline{\mathcal R}.
\eeq

\noindent
(iii) In this article we will always verify a stronger condition
than \ref{p6}, namely that for every $u,v\in \overline{\mathcal R}$
there exists $g\in G$ such that $d(u,g.v)=d_G(Gu,Gv)$.

\end{remark}

Theorem \ref{ExistencePrinc} and the compactness condition \ref{p2} 
have the following consequence.

\begin{corollary}
\label{superimportantcorollary:)}
Suppose $\mathcal M$ is nonempty. Then if $u_j \in \mathcal R$ satisfies $F(u_j) \to \inf_{{\mathcal R}}F$, then there exists $g_{j} \in G$  and $ u \in \mathcal M$ such that $g._{j} u_{j} \to_{d} u$.
\end{corollary}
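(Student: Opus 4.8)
The plan is to assemble the statement directly from Theorem \ref{ExistencePrinc} and the compactness axiom \ref{p2}, with no new ingredients. Since $\mathcal M\neq\emptyset$, the implication (i)$\,\Rightarrow\,$(ii) of Theorem \ref{ExistencePrinc} applies: $F:\mathcal R\to\RR$ is $G$-invariant and there exist $C,D>0$ with
$$
F(u)\geq Cd_G(G0,Gu)-D,\qquad u\in\mathcal R.
$$
Write $m:=\inf_{\mathcal R}F$, which by \eqref{InfRealization} equals $\inf_{\overline{\mathcal R}}F$ and is finite because $\mathcal M\neq\emptyset$. Since $F(u_j)\to m$, the values $F(u_j)$ are bounded, say $F(u_j)\leq B$ for all $j$, whence $d_G(G0,Gu_j)\leq (B+D)/C=:R$ for every $j$.

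Next, using \ref{p4} together with the symmetry computation in part (i) of the lemma inside the proof of Theorem \ref{ExistencePrinc} (which rewrites $d_G(Gu,Gv)=\inf_{h\in G}d(u,h.v)$), one has $d_G(G0,Gu_j)=\inf_{h\in G}d(0,h.u_j)$. Hence, for each $j$ we may choose $g_j\in G$ with
$$
d(0,g_j.u_j)\leq d_G(G0,Gu_j)+1\leq R+1.
$$
By \ref{a4}, $g_j.u_j\in\mathcal R$; and by the $G$-invariance of $F$ (equivalently Lemma \ref{GinvLemma}), $F(g_j.u_j)=F(u_j)\to m=\inf_{\overline{\mathcal R}}F$. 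Thus $\{g_j.u_j\}_j\subset\overline{\mathcal R}$ is a minimizing sequence for $F$ lying within $d$-distance $R+1$ of $0$.

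Finally, applying \ref{p2} to $\{g_j.u_j\}_j$ yields some $u\in\mathcal M$ and a subsequence of $\{g_j.u_j\}_j$ that $d$-converges to $u$; passing to the corresponding subsequence of $u_j$ and the matching $g_j$ gives the claim. There is no real obstacle here—the argument is a bookkeeping assembly of facts already proved. The only two points that need attention are: (a) \ref{p2} delivers convergence only along a subsequence, so the conclusion is to be read up to extraction; and (b) one must invoke the $G$-invariance of $F$ to transport the minimizing property from $u_j$ to $g_j.u_j$, and the identity $\inf_{\mathcal R}F=\inf_{\overline{\mathcal R}}F$ from \eqref{InfRealization} so that $\{g_j.u_j\}$ genuinely minimizes $F$ over $\overline{\mathcal R}$ as required by \ref{p2}.
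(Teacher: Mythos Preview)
Your proof is correct and follows exactly the approach the paper intends: the corollary is stated as an immediate consequence of Theorem \ref{ExistencePrinc} and \ref{p2}, and you have unpacked precisely that argument, including the same device of choosing $g_j$ so that $d(0,g_j.u_j)\leq d_G(G0,Gu_j)+1$ used in the proof of (ii)$\Rightarrow$(i). Your caveat (a) about subsequential convergence is well taken and matches the strength of \ref{p2}.
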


In the search for canonical K\"ahler metrics one often studies geometric data that minimizes in the limit an appropriate energy functional $F$. 
Examples include the Ricci flow along with its twisted and modified versions, the Ricci iteration, or the (weak) Calabi flow. As stated, the above result partially generalizes  \cite[Theorem B (ii)]{bbegz} and \cite[Theorem 1.2]{brm1}, each of which treats a particular case of interest in the presence of a trivial automorphism group. 

To mention a concrete application, as it will be clear after the proof of Theorem \ref{KEexistenceThm}, the above corollary gives $d_1$-convergence up to automorphisms of the Ricci iteration in the presence of a \KE metric. Smooth convergence up to automorphisms was conjectured in \cite[Conjecture 3.2]{R08}.

\section{The Finsler geometry}
\lb{FinslerSec}
Let $(M,\JJJ,\o)$ denote a connected compact closed \K manifold. 
Recall \eqref{HoEq}; the space of smooth strictly $\o$-plurisubharmonic 
functions (\K potentials)
\beq
\label{HoEq}
\calH_\o 
:=
\{\vp\in C^{\infty}(M) \,:\, \,  \omega_\vp\in\calH\},
\end{equation} 
can be identified with $\H\times\RR$.
For any $\varphi \in \mathcal H_\o$ the total volume
\beq\lb{VEq}
V:=\int_M\o_\varphi^n.
\eeq
is constant independent of $\varphi$. Consider the following weak 
Finsler metric on $\mathcal H_\o$ \cite{da4}:
\begin{equation}\label{FinslerDef}
\|\xi\|_\vp:=  V^{-1}\int_M |\xi| \ovpn, 
\q  \xi \in T_\vp \mathcal{H}_\o=C^\infty(M).
\end{equation}

\bremark
More generally, one may consider $L^p$ metrics on $\mathcal H_\o$
\cite{da4}. The case $p=2$ is the much-studied weak Riemannian metric 
of Mabuchi mentioned in the Introduction. 
Though it may seem surprising at first, in this article
we only need an understanding of the case $p=1$. 
\eremark

A curve $[0,1]\ni t \to \alpha_t \in \mathcal{H}$ 
is called smooth if $\alpha(t,z)=\alpha_t(z) \in C^\infty([0,1] \times M)$. 
Denote $\dot\a_t:=\del\a(t)/\del t$.
The length of a smooth curve $t \to \alpha_t$ is
\begin{equation}\label{curve_length_def}
\ell_1(\alpha):=\int_0^1\|\dot \alpha_t\|_{\alpha_t}dt.
\end{equation}

\bdefin
The path length distance of $(\Ho,||\,\cdot\,||)$ is defined by
$$
d_1({u_0},{u_1}):= 
\inf\{\ell_1(\alpha)\,:\,\alpha:[0,1]\ra\Ho \h{\ is a smooth curve with \ }
\a(0)=u_0,\, \a(1)=u_1\}.
$$
We call the pseudometric $d_1$ the {\it Finsler metric}. 
\edefin

It turns out $d_1$ is a bona fide metric
\cite[Theorem 3.5]{da4}.
To state the result,
consider 
$[0,1]\times\RR\times M$ as a complex manifold of dimension
$n+1$, and denote by $\pi_2:[0,1]\times\RR\times M\ra M$ the natural projection.
\bthm
\lb{d1Thm}
$(\Ho, d_1)$ is a metric space.
Moreover, 
\begin{equation}
\label{distgeod}
d_1(u_0,u_1)=\|\dot u_0 \|_{u_0}\ge0,
\end{equation}
with equality iff $u_0=u_1$, 
where $\dot u_0$ is the image of $(u_0,u_1)\in \Ho\times\Ho$ under the 
Dirichlet-to-Neumann map for the \MA equation,
\beq\label{MabuchiEq}
\vp\in\PSH(\pi_2^\star\o, [0,1]\times\RR\times M),\q
(\pi_2^\star\o+\i\ddbar \vp)^{n+1}=0, 
\q
 \vp|_{\{i\}\times\RR}=u_i,\; i=0,1.
\eeq
\ethm
The Dirichlet-to-Neumann operator simply maps $(u_0,u_1)$
to the initial tangent vector of the curve $t\mapsto u_t$
that solves \eqref{MabuchiEq}.
Since $u$ is $\pi_2^*\o$-psh and independent of the imaginary part
of the first variable, it is convex in $t$. Thus,
\beq
\lb{dotu0eq}
\dot u_0(x):=\lim_{t\ra 0^+}\frac{u(t,x)-u_0(x)}{t},
\eeq
with the limit well-defined since the difference quotient
is decreasing in $t$. Let
$$
\PSH(M,\o)=\{ \vp \in L^1(M,\o^n)\,:\, \h{$\vp$ is upper semicontinuous and } 
\ovp\geq 0 \}.
$$
Following Guedj--Zeriahi \cite[Definition 1.1]{gz} define,
$$
\E(M,\o):=
\big\{
\vp\in \PSH(M,\o)\,:\,
\lim_{j\ra-\infty}\int_{\{\vp\le j\}}(\o+\i\ddbar\max\{\vp,j\})^n= 0
\big\}.
$$
For each $\vp\in\E(M,\o)$, define
$$
\ovpn:=\lim_{j\ra-\infty}
{\bf 1}_{\{\vp>j\}}
(\o+\i\ddbar\max\{\vp,j\})^n.
$$
By definition, ${\bf 1}_{\{\vp>j\}}(x)$ is equal to $1$ if $\vp(x)>j$ and zero otherwise,
and
the measure $(\o+\i\ddbar\max\{\vp,j\})^n$ is defined by the
work of Bedford--Taylor \cite{bt} since $\max\{\vp,j\}$ is bounded.
Define,
$$
\E_1:=\big\{\vp\in\E(M,\omega)\,:\, \int|\vp|\ovpn<\infty\big\}.
$$
The next result characterizes the $d_1$-metric completion
\cite[Theorem 2]{da4}.
\bthm
\lb{d1CompletionThm}
The metric completion of $(\H_\o,d_1)$ equals $(\E_1,{d_1})$,
where 
$$
d_1(u_0,u_1):=\lim_{k\ra\infty}
d_1(u_0(k),u_1(k)),
$$
for any smooth decreasing sequences $\{u_i(k)\}_{k\in\NN}\subset\Ho$
converging pointwise to $u_i \in \mathcal E_1, i=0,1$.
Moreover, for each $t\in(0,1)$, define
\begin{equation}\label{EpGeodDef}
u_t:= \lim_{k \to\infty}u_t(k), \ t \in (0,1),
\end{equation}
where $u_t(k)$ is the solution of \eqref{MabuchiEq}
with endpoints $u_i(k), i=0,1$. Then $u_t\in \E_1$,
and the curve $t \to u_t$ is well-defined independently of the choices
of approximating sequences and is a $d_1$-geodesic.
\ethm

\section{Action functionals and their Euler--Lagrange equations
}
\lb{ActSec}

In \S\ref{EnergySubSec}--\ref{ActionSubSec} we review certain 
energy functionals on $\H$ and $\Ho$. For an expository survey 
of this topic we refer to \cite[\S5]{R14}. 

\subsection{Basic energy functionals}
\lb{EnergySubSec}

The most basic functional, introduced by Aubin
\cite{Aubin84}, is defined by
\beq
\label{AubinEnergyEq}
\begin{aligned}
J(\vp)=J(\o_\vp):=
V^{-1}\int_M\vp\o^n
-
\frac{V^{-1}}{n+1}\int_M
\vp\sum_{l=0}^{n}\o^{n-l}\w\o_{\vp}^{l}.
\end{aligned}
\eeq
The notation $J(\vp)=J(\o_{\vp})$ is justified by the fact that $J(\vp)=J(\vp+c)$
for any $c\in\RR$.
The Aubin--Mabuchi functional was introduced by Mabuchi \cite[Theorem 2.3]{Mabuchi87},
\begin{equation}\label{AMdef}
\h{\rm AM}(\vp):= V^{-1}\int_M\vp\o^n-J(\vp)=
\frac{V^{-1}}{n+1}\sum_{j=0}^{n}\int_M \vp\, \o^j \wedge \o_\vp^{n-j},
\end{equation}
Note that
\beq
\lb{AMDifferenceEq}
\h{\rm AM}(v)-\h{\rm AM}(u)
=
\frac{V^{-1}}{n+1}\int_M(v-u)\sum_{k=0}^n \o_u^{n-k}\w \o_v^k.
\eeq
Among other things, this formula shows that 
\beq\lb{AMMonEq}
u \leq v \q\Rightarrow \q \h{\rm AM}(u) \leq \h{\rm AM}(v).
\eeq

One can characterize $d_1$-convergence very concretely, as elaborated below. In addition, we note that monotone sequences with limit in $\mathcal E_1$ are always $d_1$-convergent \cite[Proposition 5.9, Proposition 4.9]{da4}:

\blem
\label{L1Lemma}
Suppose that $\{u_k\}\subset\E_1$ and $u\in \E_1$. Then the following hold: \\
\noindent 
(i) 
$d_1(u_k,u) \to 0$ if and only if $\h{\rm AM}(u_k) \to \h{\rm AM}(u)$
and $u_k \to u$ in $L^1(M,\on)$. \\
\noindent 
(ii) 
If $\{u_k\}$ increases (or decreases) pointwise a.e. to $u$ then $d_1(u_k,u) \to 0$.
\elem

The subspace
\beq
\lb{H0Eq}
\calH_0:=\h{\rm AM}^{-1}(0)\cap \Ho
\eeq
is isomorphic to $\calH$ \eqref{HEq}, the space of K\"ahler metrics. We use this isomorphism to endow
$\calH$ with a metric structure, by pulling 
back the Finsler metric defined on $\Ho$. 

\blem
\lb{E1capH0Lemma}
(i)
$\h{\rm AM},J: \mathcal H_\o \to \Bbb R$ each admit a unique $d_1$-continuous extension 
to $\E_1$ using the same formula as \eqref{AMdef} and \eqref{AubinEnergyEq}.

\noindent 
(ii)
$\h{\rm AM}$ is linear along the $d_1$-geodesic $t \to u_t$ defined in \eqref{EpGeodDef}.

\noindent 
(iii)
The subspace $(\E_1\cap \h{\rm AM}^{-1}(0),d_1)$ is a complete geodesic metric space, coinciding with the metric completion of $(\mathcal H_0,d_1)$ (recall 
\eqref{H0Eq}).
\elem

\bpf
(i) In \cite{begz} it is shown that for $u_{1},\ldots,u_k\in\E_1$,
the positive currents
$$
u_1\o_{u_2}^{j_2} \w \o_{u_3}^{j_3} \w \ldots \wedge \o_{u_k}^{j_k}
$$
can be defined by approximating $u_i$ by a decreasing sequence of 
smooth functions $\{u_i(k)\}\subset\mathcal H_\o$ so that the limiting measure
is independent of the choice of such sequences. Thus, 
formula \eqref{AMdef} makes sense for all $u\in\E_1$.
On the other hand, decreasing sequences converge in $d_1$ by 
Lemma \ref{L1Lemma} (ii).
Thus, to prove (i) for AM it remains 
(by Lemma \ref{LipschitzExt} and Theorem \ref{d1CompletionThm}) 
to show that $\h{\rm AM}$ is $d_1$-Lipschitz continuous as a function
from $\H_\o$ to $\RR$. This is proved below in Lemma \ref{AMXLemma} (take $X=0$).
 
Next, Lemma \ref{L1Lemma} (i) gives that $u \to \int_X u \o^n$ is also 
$d_1$-continuous. Thus, \eqref{AMdef} implies that also $J$ admits
a unique $d_1$-continuous extension to $\E_1$, and \eqref{AubinEnergyEq} still holds for the extension.

\medskip
\noindent(ii)
 It is well known that $t \to \h{\rm AM}(u_t)$ is linear for solutions of \eqref{MabuchiEq} when $u_0,u_1 \in \mathcal \E_1\cap L^\infty(M)$
\cite[Remark 4.5]{BBouc}.

When $u_0,u_1 \in \mathcal E_1$,  let $u_t(k)$ be the sequence of curves in the definition of $u_t$ \eqref{EpGeodDef}. For each $t$,
the sequence $u(k)_t$ decreases pointwise to $u_t$, hence by Lemma \ref{L1Lemma}(ii) we have $d_1(u(k)_t,u_t) \to 0$. As $\h{\rm AM}$ is $d_1$-continuous, this gives $\h{\rm AM}(u(k)_t) \to \h{\rm AM}(u_t)$. By the above we also have that  $t \to \h{\rm AM}(u(k)_t)$ is linear. Taking the limit $k \to \infty$ we can conclude that $t \to \h{\rm AM}(u_t)$ is also linear.

\medskip
\noindent(iii) As $\h{\rm AM} : \mathcal E_1 \to \Bbb R$ is $d_1$-continuous, it follows that $\mathcal E_1 \cap \h{\rm AM}^{-1}(0)$ is $d_1$-closed. From (ii) it follows that $(\mathcal E_1 \cap \h{\rm AM}^{-1}(0),d_1)$ is a geodesic metric space.
\epf

As proposed in \cite{begz}, using monotonicity \eqref{AMMonEq} it is possible to extend $\h{\rm AM}$  further to a functional on $\textup{PSH}(M,\o)$ taking $-\infty$ as a possible value \cite{begz}:
\beq
\lb{AMExteq}
\h{\rm AM}(\vp):= \lim_{k \to -\infty}\h{\rm AM}(\max(\vp,k)), \ \vp \in \textup{PSH}(M,\o).
\eeq
By Lemma \ref{L1Lemma},  $d_1(\max(\vp,k),\vp) \to 0$ when $\vp \in  \mathcal E_1$, hence this extension agrees with the one given the previous lemma. In fact the following
result from \cite{begz}:

\begin{lemma} \label{AMfinite} Suppose $\vp \in \textup{PSH}(M,\o)$. Then  $\vp \in \mathcal E_1$ if and only if  $\h{\rm AM}(\vp) > -\infty$.
\end{lemma}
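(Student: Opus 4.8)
The plan is to prove the two implications separately. The direction $\vp\in\E_1\Rightarrow\h{\rm AM}(\vp)>-\infty$ is essentially immediate from what precedes: by Lemma \ref{E1capH0Lemma}(i), $\h{\rm AM}$ extends uniquely to an $\RR$-valued $d_1$-continuous functional on $\E_1$, and on $\E_1$ this extension coincides with the one defined by \eqref{AMExteq}, since $\max(\vp,-k)\searrow\vp$ with $d_1(\max(\vp,-k),\vp)\to 0$ by Lemma \ref{L1Lemma}(ii). Hence $\h{\rm AM}(\vp)\in\RR$, in particular $>-\infty$. So the content lies in the converse.

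Assume $\h{\rm AM}(\vp)>-\infty$. Since $\h{\rm AM}(\vp+c)=\h{\rm AM}(\vp)+c$ and membership in $\E_1$ is unaffected by adding a constant, I may assume $\vp\le 0$. Put $\vp_k:=\max(\vp,-k)$, a decreasing sequence of bounded $\o$-psh functions with $\vp_k\searrow\vp$; by \eqref{AMMonEq} (which, via \eqref{AMExteq}, is available on all of $\PSH(M,\o)$) and \eqref{AMExteq} itself, $\h{\rm AM}(\vp_k)\searrow\h{\rm AM}(\vp)$ and each $\h{\rm AM}(\vp_k)\le 0$. The single estimate that drives everything is elementary: writing $b^{(k)}_j:=\int_M(-\vp_k)\,\o^j\wedge\o_{\vp_k}^{n-j}\ge 0$ for $j=0,\dots,n$, formula \eqref{AMdef} applied to the bounded potential $\vp_k$ gives $\sum_{j=0}^n b^{(k)}_j=-(n+1)V\,\h{\rm AM}(\vp_k)\le -(n+1)V\,\h{\rm AM}(\vp)=:C<\infty$; since all $b^{(k)}_j$ are nonnegative, in particular $\int_M(-\vp_k)\,\o_{\vp_k}^n=b^{(k)}_0\le C$ for every $k$. (No integration by parts is needed, only positivity of the mixed Monge--Amp\`ere measures.)

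From this, full mass follows at once: since $\vp_k\equiv -k$ on $\{\vp\le -k\}$,
$$
\int_{\{\vp\le -k\}}(\o+\i\ddbar\vp_k)^n=\frac1k\int_{\{\vp\le -k\}}(-\vp_k)\,\o_{\vp_k}^n\le\frac Ck\longrightarrow 0\q(k\to\infty),
$$
so $\vp\in\E(M,\o)$ and $\o_\vp^n$ is well-defined. For the integrability of $\vp$ against $\o_\vp^n$: for $k\ge m$ one has $-\vp_m\le-\vp_k$, hence $\int_M(-\vp_m)\o_{\vp_k}^n\le\int_M(-\vp_k)\o_{\vp_k}^n\le C$; letting $k\to\infty$, the non-pluripolar Monge--Amp\`ere measures $\o_{\vp_k}^n$ converge weakly to $\o_\vp^n$ (valid because $\vp_k\searrow\vp$ with $\vp\in\E(M,\o)$), and testing against the bounded quasi-continuous function $-\vp_m$ yields $\int_M(-\vp_m)\o_\vp^n\le C$; finally $-\vp_m\nearrow-\vp$ pointwise, so monotone convergence gives $\int_M(-\vp)\o_\vp^n\le C<\infty$, i.e.\ $\vp\in\E_1$.

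The main obstacle is this last limit passage, which rests on the pluripotential-theoretic machinery of \cite{begz}: the locality of non-pluripolar products on plurifine-open sets, their weak continuity along decreasing sequences with limit in $\E(M,\o)$, the legitimacy of integrating a bounded quasi-continuous function against such measures in the limit, and the fact that $\o_\vp^n$ charges no pluripolar set. All of these are available from \cite{begz}; the rest of the argument is elementary. One could equivalently bypass the weak-convergence step by invoking from \cite{begz} that formula \eqref{AMdef}, read with non-pluripolar products, still computes $\h{\rm AM}$ on all of $\E(M,\o)$: once $\vp\in\E(M,\o)$ is known, $-\infty<\h{\rm AM}(\vp)=\frac{V^{-1}}{n+1}\sum_{j=0}^n\int_M\vp\,\o^j\wedge\o_\vp^{n-j}$ forces each nonpositive summand, in particular the $j=0$ term $\int_M\vp\,\o_\vp^n$, to be finite.
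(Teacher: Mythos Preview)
Your proof is correct and arrives at the same conclusion by a slightly different path than the paper. Both arguments normalize $\vp\le 0$, set $\vp_k=\max(\vp,-k)$, and reduce everything to the uniform bound $\int_M(-\vp_k)\,\o_{\vp_k}^n\le C$. The paper obtains this bound via the identity $\h{\rm AM}(u)=(I-J)(u)+V^{-1}\int u\,\o_u^n$ together with the inequality $0\le(I-J)\le\frac n{n+1}I$ (which requires integration by parts, justified by Bedford--Taylor); you obtain it more directly from the positivity of each mixed term in \eqref{AMdef}, with no integration by parts needed. For the limit passage, the paper simply invokes \cite[Proposition 1.4]{gz}, which packages exactly the statement $-\int_M|\vp|\o_\vp^n=\lim_k\int_M\vp_k\o_{\vp_k}^n$; you instead unpack this by first proving full mass via the Chebyshev-type estimate $\int_{\{\vp\le -k\}}\o_{\vp_k}^n\le C/k$, and then passing to the limit through weak convergence against the bounded quasi-continuous test function $-\vp_m$ followed by monotone convergence in $m$. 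Your route is more self-contained and avoids the $I$--$J$ machinery, at the cost of spelling out pluripotential-theoretic facts that the paper absorbs into a single citation. For the easy direction, you appeal to the $d_1$-continuity of $\h{\rm AM}$ on $\E_1$ already established in Lemma \ref{E1capH0Lemma}, whereas the paper extracts it from the upper inequality $\h{\rm AM}(\vp_k)\le\frac{V^{-1}}{n+1}\int\vp_k\o_{\vp_k}^n+\frac{nV^{-1}}{n+1}\int\vp_k\o^n$; both are fine.
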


\begin{proof} 
We can suppose that $\sup_M \vp =0$ and denote $\vp_k = \max(\vp,-k)$. 
By Bedford--Taylor \cite{bt}, we can define a functional 
$I:\E_1\cap L^\infty\ra\RR$ by
$$
I(u):=\V\int_M u(\on-\o_u^n).
$$
Recall that for $u\in \E_1\cap L^\infty$ \cite{Tianbook,bt}
\beq
\lb{IJIEq}
\h{\rm AM}(u)
=(I-J)(u)+\V\int u\o_u^n,
\eeq
and since $\int u\o_u^j\w \o^{n-j}\le \int u\o_u^{j-1}\w \o^{n-j+1}$
(integration by parts is again justified by \cite{bt}) \cite{Aubin84}
$$
0\le (I-J)(u)\le \frac n{n+1}I(u).
$$
Thus,
$$ 
\V\int_M \vp_k \o_{\vp_k}^n \leq \textup{AM}(\vp_k) 
\leq 
\frac{\V}{n+1}\int_M \vp_k \o_{\vp_k}^n + \frac{n\V}{n+1}\int_M \vp_k \o^n.
$$
For $k$ big enough $\sup_M \vp_k=0$, hence by \eqref{GreenEq} below, the rightmost term in the above estimate is uniformly bounded.
Lastly, \cite[Proposition 1.4]{gz} gives that $-\int_M |\vp| \o^n_\vp = \lim_k \int_M \vp_k \o_{\vp_k}^n$, concluding the proof.
\end{proof}

Observe that,
\beq
\lb{JH0Eq}
J(\vp)=V^{-1}\int_M\vp\o^n, \q \vp\in\calH_0.
\eeq

Recall that Green's formula implies that 
for all $u\in \textup{PSH}(M,\o)$, 
there exists a constant $C>0$ depending only on $(M,\o)$ such that
\cite[p. 49]{Tianbook}
\beq
\lb{GreenEq}
\sup_M u \le V^{-1}\int_M u\on + C \leq \sup_M u +C.
\eeq

Next, we recall a concrete formula for the $d_1$ metric relating it to the Aubin--Mabuchi energy and also give a concrete growth estimate for $d_1$. First we need to introduce the following rooftop type 
envelope for $u,v \in \mathcal E_1$:
$$
P(u,v)(z):= 
\sup\big\{w(z)\,:\, w \in \PSH(M,\o),\, w \leq \min\{u,v\}\big\}.
$$ 
Note that $P(u,v) \in \mathcal E_1$ \cite[Theorem 2]{da3}.
We recall the following properties of $d_1$ \cite[Corollary 4.14, Theorem 3]{da4}.
\bprop
\lb{PythProp}
Let $u,v\in\E_1$. Then,
\begin{equation}
\label{Pythagorean}
d_1(u,v)=\h{\rm AM}(u) + \h{\rm AM}(v) - 2\h{\rm AM}(P(u,v)).
\end{equation}
Also, there exists $C>1$ such that for all $u,v\in\E_1$,
\begin{equation}
\label{d1CharFormula}
C^{-1} d_1(u,v) \leq \int_M |u-v|\o_u^n + \int_M |u-v|\o_v^n
\le C d_1(u,v).
\end{equation}
\eprop

The following result is stated in \cite[Remark 6.3]{da4}.
As it will be essential for us, we give a proof here.

\begin{proposition} 
\label{Jproperness}
There exists $C',C>1$ such that
for all $u \in \mathcal H_0$ (recall \eqref{H0Eq}):
$$
\frac{1}{C'} \sup_M u - C' \leq \frac{1}{C}J(u) -C \leq d_1(0,u) 
\leq  C J(u) + C \leq C' \sup_M u + C'.
$$
\end{proposition}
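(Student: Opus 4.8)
The plan is to establish the chain of five inequalities by proving a few elementary building blocks and then chaining them together. Throughout, $u \in \mathcal H_0$, so $\h{\rm AM}(u)=0$ and, by \eqref{JH0Eq}, $J(u)=V^{-1}\int_M u\,\o^n$. The first observation is that, by \eqref{GreenEq}, $J(u) = V^{-1}\int_M u\,\o^n$ is comparable to $\sup_M u$ up to an additive constant: $\sup_M u - C \le J(u) \le \sup_M u$. This immediately handles the two outermost inequalities (the leftmost and the rightmost), reducing everything to showing $\tfrac1C J(u) - C \le d_1(0,u) \le C J(u) + C$, i.e. that $d_1(0,u)$ is comparable, up to additive constants, to $J(u)$ (equivalently to $\sup_M u$) for $u \in \mathcal H_0$.

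For the upper bound $d_1(0,u) \le C J(u) + C$, I would use the Pythagorean-type formula \eqref{Pythagorean}: since $\h{\rm AM}(0) = \h{\rm AM}(u) = 0$, we get $d_1(0,u) = -2\h{\rm AM}(P(0,u))$. Now $P(0,u) \le 0$, so $\h{\rm AM}(P(0,u)) \le 0$ by \eqref{AMMonEq}, giving $d_1(0,u) = -2\h{\rm AM}(P(0,u)) \ge 0$ (consistent), but more importantly I need a lower bound on $\h{\rm AM}(P(0,u))$, i.e. an upper bound on $-\h{\rm AM}(P(0,u))$. The key point is that $P(0,u) \ge \min\{0, \inf_M u\} - (\text{something})$... more carefully: $P(0,u) \ge u - \sup_M u^+$ roughly, or one shifts: if $\sup_M u = M_u$, then $P(0,u) \ge P(u - M_u, u) \wedge 0$-type estimates. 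The cleanest route: use that $\h{\rm AM}$ is monotone and translation-equivariant, $\h{\rm AM}(v+c) = \h{\rm AM}(v)+c$, together with $P(0,u) \ge \min\{0, u\} \ge u - M_u^+ \ge u - J(u) - C$ (using $M_u \le J(u)+C$), to get $\h{\rm AM}(P(0,u)) \ge \h{\rm AM}(u) - J(u) - C = -J(u) - C$, whence $d_1(0,u) \le 2J(u) + 2C$. Conversely, for the lower bound $d_1(0,u) \ge \tfrac1C J(u) - C$, I would use the other characterization \eqref{d1CharFormula}: $d_1(0,u) \ge C^{-1}\big(\int_M |u|\,\o^n + \int_M|u|\,\o_u^n\big) \ge C^{-1}\int_M |u|\,\o^n \ge C^{-1}\big|\int_M u\,\o^n\big| = C^{-1} V J(u)$ when $J(u) \ge 0$; and if $J(u) < 0$ the inequality $d_1(0,u) \ge \tfrac1C J(u) - C$ is trivial since the left side is nonnegative. (One must be slightly careful with the normalization constant $V$ versus the constant in \eqref{d1CharFormula}, but this is absorbed into $C$.)

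The main obstacle, I expect, is the upper bound $d_1(0,u) \le CJ(u)+C$ and making the envelope estimate rigorous — specifically controlling $\h{\rm AM}(P(0,u))$ from below. The subtlety is that $P(0,u)$ can be much smaller than both $0$ and $u$ on large sets, so a naive pointwise bound is not enough; one really wants to exploit that $\h{\rm AM}(P(0,u)) \ge \h{\rm AM}(u) + (\text{lower bound depending only on } \sup_M u)$. The cleanest argument uses translation-equivariance: set $v := u - \sup_M u$, so $\sup_M v = 0$, hence $v \le 0$ and $P(0,u) = P(\sup_M u, v) + \dots$; actually $P(0,u)(z) = \sup\{w : w \le \min(0,u)\}$, and since $v \le 0$ we have $\min(0,u) \ge \min(0, v) + \sup_M u$ is false in general — so instead note $\min(0,u) \ge u - (\sup_M u)^+ \ge u - \sup_M u - C$ is wrong too. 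Let me restate: $\min(0,u) \ge u$ always, so $P(0,u) \ge P(u,u) = u$, giving $\h{\rm AM}(P(0,u)) \ge \h{\rm AM}(u) = 0$?? That would give $d_1(0,u) \le 0$, absurd unless $u = 0$. The resolution: $P(0,u) \le 0$ but the estimate $P(0,u) \ge u$ combined with $\h{\rm AM}(P(0,u)) \ge \h{\rm AM}(u) = 0$ and $\h{\rm AM}(P(0,u)) \le \h{\rm AM}(0) = 0$ forces $\h{\rm AM}(P(0,u)) = 0$ — but that is genuinely true only when... hmm, this shows I must be more careful, and indeed the correct statement is that $\h{\rm AM}(P(0,u))$ can be strictly negative because $P(0,u)$, while $\ge u$ pointwise where $u \le 0$, need not dominate $u$ globally since we also need $P(0,u) \le 0$; so on $\{u > 0\}$ we have $P(0,u) \le 0 < u$. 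The honest bound is: $P(0,u) \ge u - \sup_M u$ (since $u - \sup_M u \le 0$ and $u - \sup_M u \le u$, and it is $\o$-psh), so $\h{\rm AM}(P(0,u)) \ge \h{\rm AM}(u - \sup_M u) = \h{\rm AM}(u) - \sup_M u = -\sup_M u \ge -J(u)$. Hence $d_1(0,u) = -2\h{\rm AM}(P(0,u)) \le 2\sup_M u \le 2J(u) + 2C$. This is the argument I would write up; the only real work is justifying that all these $\h{\rm AM}$-manipulations are valid on $\mathcal E_1 \supset \mathcal H_0$, which is exactly Lemma \ref{E1capH0Lemma}(i) and the monotonicity/translation-equivariance of $\h{\rm AM}$, all available from the excerpt.
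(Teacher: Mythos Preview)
Your proposal is correct and, once you work through the false starts, arrives at exactly the paper's argument: the outer inequalities come from \eqref{JH0Eq} and \eqref{GreenEq}; the upper bound on $d_1(0,u)$ uses $u - \sup_M u \le \min\{0,u\}$ so $P(0,u) \ge u - \sup_M u$, monotonicity and translation-equivariance of $\h{\rm AM}$, and \eqref{Pythagorean} to get $d_1(0,u) \le 2\sup_M u$; and the lower bound comes from \eqref{d1CharFormula} together with \eqref{GreenEq}. (Your case ``$J(u)<0$'' is in fact vacuous since $J\ge 0$, but this does no harm.)
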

\begin{proof}Let $u\in \H_0$. 
Equations \eqref{GreenEq} and \eqref{d1CharFormula} imply that 
$\frac{1}{C'}\sup_M u -C'\leq d_1(0,u)$.  Now, $u - \sup_M u \leq \min\{0,u\}$, so 
$u - \sup_M u \leq P(0,u)$. Thus, 
$-\sup_M u = \h{\rm AM}(u - \sup_M u) \leq \h{\rm AM}(P(0,u)).$ 
Combined with \eqref{Pythagorean}, 
$$
d_1(0,u)=-2\h{\rm AM}(P(0,u))\leq 2 \sup_M u.
$$
Finally, $J(u)$ and $\sup_M u$ are uniformly equivalent by
\eqref{JH0Eq} and \eqref{GreenEq}.
\end{proof}

Finally, we recall two crucial compactness results.
The first is a variant of a result of Berman et al \cite{bbegz}. 

\begin{theorem} 
\label{EntropyCompactnessThm}
Let $p>1$ and 
suppose $\mu = f \o^n$ is a probabilty measure with 
$f \in L^p(M)$.  
Suppose there exists $C>0$ such that $\{u_k\}_k \subset \E_1$ satisfies
$$
|\sup_M u_k| < C, \qq \int_M \log \frac{\o_{u_k}^n}{\mu}\o_{u_k}^n < C. 
$$
Then $\{ u_k\}$ contains a $d_1$-convergent subsequence. 
\end{theorem}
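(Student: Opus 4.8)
The plan is to view this as a variant of the finite-energy entropy-compactness of Berman et al.\ \cite{bbegz}, transported from the $\mathcal E^1$-topology to the $d_1$-topology via \cite{da4}. One can either quote \cite{bbegz} directly after normalizing the suprema (using that $d_1(v,v+c)=|c|$ and $|\sup_M u_k|<C$), or argue in three steps: extract a weak $L^1$-limit; promote the entropy bound to a uniform bound on the Aubin--Mabuchi energy; and then invoke the entropy bound once more to rule out loss of \MA mass, which by Lemma \ref{L1Lemma}(i) is exactly what $d_1$-convergence demands.

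First, since $\{w\in\PSH(M,\o):|\sup_M w|\le C\}$ is compact in $L^1(M,\on)$, after passing to a subsequence $u_k\to u$ in $L^1(M,\on)$ and $\on$-a.e., with $u\in\PSH(M,\o)$ and $\sup_M u$ finite. Second, for the energy bound, write $v_k:=u_k-\sup_M u_k$ (so $\sup_M v_k=0$) and apply the Gibbs variational principle $\mathrm{Ent}_\mu(\nu)=\sup_\phi\big(\int_M\phi\,d\nu-\log\int_M e^{\phi}\,d\mu\big)$ with $\nu=V^{-1}\o_{u_k}^n$ and $\phi$ a bounded truncation of $-t\,u_k$, $t>0$ small, letting the truncation decrease to $-\infty$:
\[
V^{-1}\!\int_M(-u_k)\,\o_{u_k}^n\ \le\ \tfrac1t\,\mathrm{Ent}_\mu\!\big(V^{-1}\o_{u_k}^n\big)+\tfrac1t\log\!\int_M e^{-t u_k}\,d\mu .
\]
By \Holder with conjugate exponent $q=p/(p-1)$ and $f\in L^p$, $\int_M e^{-tu_k}d\mu\le e^{tC}\|f\|_{L^p}\big(\int_M e^{-tq v_k}\on\big)^{1/q}$; choosing $t$ so that $tq<\gamma_0(M,\o)$, where $\gamma_0$ is a uniform-integrability (Skoda--H\"ormander / Tian--Zeriahi) exponent with $\sup\{\int_M e^{-\gamma w}\on:\ w\in\PSH(M,\o),\ \sup_M w=0\}<\infty$, makes that integral bounded independently of $k$. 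Since the hypothesis bounds $\mathrm{Ent}_\mu(V^{-1}\o_{u_k}^n)$, we obtain $\sup_k V^{-1}\int_M(-u_k)\o_{u_k}^n<\infty$, hence $\sup_k\int_M|u_k|\o_{u_k}^n<\infty$; by \eqref{IJIEq} also $|\mathrm{AM}(u_k)|$ is uniformly bounded, and with upper semicontinuity of $\mathrm{AM}$ under $L^1(\on)$-convergence \cite{begz} and Lemma \ref{AMfinite} the limit satisfies $u\in\mathcal E_1$.

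Third, by Lemma \ref{L1Lemma}(i) it remains to show $\mathrm{AM}(u_k)\to\mathrm{AM}(u)$ along a further subsequence. A Young-type bound $g\le\tfrac1\lambda(g\log g)_{+}+c_{\lambda}$ together with the entropy bound makes $\{\o_{u_k}^n\}$ equi-integrable with respect to $\on$, so (Dunford--Pettis) a further subsequence satisfies $\o_{u_k}^n\rightharpoonup\nu$ weakly in $L^1(\on)$ with $\mathrm{Ent}_\mu(V^{-1}\nu)<\infty$ by lower semicontinuity of entropy. One then identifies $\nu=\o_u^n$ and upgrades $\limsup_k\mathrm{AM}(u_k)\le\mathrm{AM}(u)$ to an equality, using the uniform energy bound above and the continuity of the \MA operator and of $\mathrm{AM}$ on the finite-energy class $\mathcal E_1$ \cite{begz,bbgz,bbegz}; then $\mathrm{AM}(u_k)\to\mathrm{AM}(u)$ and $u_k\to u$ in $L^1(M,\on)$, so Lemma \ref{L1Lemma}(i) gives $d_1(u_k,u)\to0$.

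\emph{The main obstacle is this third step}: excluding that a positive fraction of the mass $\o_{u_k}^n$ escapes onto strictly more singular potentials in the limit, which would keep $L^1$-convergence intact but force $\mathrm{AM}(u_k)$ strictly below $\mathrm{AM}(u)$ and destroy $d_1$-convergence. This is precisely where $f\in L^p$ with $p>1$, rather than merely $f\in L^1$, is indispensable — decisively in the energy estimate via the \Holder step, and again in pinning down the limiting measure — and where one leans most heavily on the finite-energy pluripotential theory of \cite{begz,bbgz,bbegz}.
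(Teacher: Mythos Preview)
Your opening sentence already contains the paper's entire proof: cite \cite[Theorem 2.17]{bbegz} to get a subsequence converging \emph{in energy} (i.e., in $L^1(M,\on)$ together with $\mathrm{AM}(u_{j_k})\to\mathrm{AM}(u)$), and then invoke Lemma \ref{L1Lemma}(i) to translate energy-convergence into $d_1$-convergence. That is exactly what the paper does, in two lines.

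Your three-step expansion is an attempt to re-prove \cite[Theorem 2.17]{bbegz} from scratch. Steps 1 and 2 are fine and standard: weak $L^1$-compactness of sup-normalized $\o$-psh functions, and the Gibbs/H\"older/Skoda argument giving a uniform bound on $\int_M|u_k|\,\o_{u_k}^n$ and hence on $\mathrm{AM}(u_k)$. But Step 3 is where the real content of \cite{bbegz} lies, and your treatment of it is a gap rather than a proof. Two assertions are not justified: (a) the identification $\nu=\o_u^n$ --- weak $L^1$-convergence $u_k\to u$ with uniformly bounded energies does \emph{not} by itself imply $\o_{u_k}^n\to\o_u^n$ weakly, so you cannot identify the Dunford--Pettis limit this way without further input; and (b) upgrading $\limsup_k\mathrm{AM}(u_k)\le\mathrm{AM}(u)$ to an equality. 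You yourself flag this (``the main obstacle is this third step'') and then defer to ``the continuity of the \MA operator and of $\mathrm{AM}$ on $\mathcal E_1$'' and to \cite{begz,bbgz,bbegz} --- but $\mathrm{AM}$ is only upper semicontinuous under $L^1$-convergence, not continuous, and the \MA operator is not continuous from $L^1$-convergence of potentials to weak convergence of measures in this generality. Ruling out the loss-of-mass scenario you describe is precisely the nontrivial part of \cite[Theorem 2.17]{bbegz}, so your expanded argument circles back to the citation it was meant to unpack. In short: your proof is correct as a citation, and incomplete as a self-contained argument; the paper opts for the former.
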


\bpf
According to Berman et al. \cite[Theorem 2.17]{bbegz}, 
$\{ u_k\}$ contains a subsequence $u_{j_k}$ converging 'in energy' to some $u \in \mathcal E_1$, i.e., $||u - u_{j_k}||_{L^1(M,\o^n)} \to 0$ and $\textup{AM}(u_{j_k}) \to \textup{AM}(u)$. According to Lemma \ref{L1Lemma}, this latter convergence is equivalent to $d_1$-convergence.
\epf

The second compactness result we recall
is an often used version of Zeriahi's generalization 
of Skoda's uniform integrability theorem \cite{ze}.

\begin{theorem} 
\label{ZeriahiCompactnessThm}
Consider the set
\beq
\lb{SkodaSetEq}
\big\{u\in\mathcal E_1\,:\, |\sup_M u|, \, |\h{\rm AM}(u)| \leq C\big\}.
\eeq
For any $p>0$ there exists $C'(C,p)>0$ such that for all $u$
belonging to \eqref{SkodaSetEq},
$$
\int_M e^{-pu}\o^n \leq C'.
$$
\end{theorem}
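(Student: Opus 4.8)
The plan is to reduce Theorem~\ref{ZeriahiCompactnessThm} to Zeriahi's uniform version of Skoda's integrability theorem via the standard mechanism: compactness of the relevant family of $\o$-psh functions in $L^1(M,\on)$, together with the well-known fact that Skoda's $L^1$-bound on $\exp(-pu)$ is \emph{uniform} over compact (indeed $L^1$-bounded) families of $\o$-psh functions with a fixed normalization. Concretely, I would first show that the set \eqref{SkodaSetEq} is bounded in $L^1(M,\on)$: by \eqref{GreenEq}, $\sup_M u$ and $V^{-1}\int_M u\on$ differ by a constant depending only on $(M,\o)$, so $|\sup_M u|\le C$ forces $|\int_M u\on|\le C''$; combined with $\sup_M u\le C$ (so $u$ is bounded above), upper semicontinuity and the submean value property give a uniform $L^1$ bound. (The hypothesis $|\h{\rm AM}(u)|\le C$ is in fact not needed for the $L^1$-bound, but it is harmless; it guarantees $u\in\E_1$ via Lemma~\ref{AMfinite}, consistent with the ambient set.)

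Next I would invoke the uniform Skoda--Zeriahi theorem. Recall Zeriahi's result \cite{ze}: for a compact \K manifold $(M,\o)$ and any $A>0$, there exist $\alpha=\alpha(M,\o)>0$ and $C_0=C_0(M,\o,A)$ such that $\int_M e^{-\alpha v}\,\on\le C_0$ for every $v\in\PSH(M,\o)$ with $\sup_M v=0$ (the bound depending only on the normalization, not on $v$ individually); moreover, by splitting $[0,\infty)$ into intervals of length $\alpha$ and iterating, one upgrades this to: for every $p>0$ there is $C'(M,\o,p)$ with $\int_M e^{-p v}\on\le C'$ for all such $v$. Equivalently, one may phrase this as uniform integrability along any $L^1$-bounded family, since $L^1$-bounded families of normalized $\o$-psh functions are relatively compact in $L^1$ and $\PSH(M,\o)$ is closed there.

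Finally I would transfer the normalization. Given $u$ in \eqref{SkodaSetEq}, set $v:=u-\sup_M u$, so $v\in\PSH(M,\o)$ with $\sup_M v=0$. Then
$$
\int_M e^{-pu}\on = e^{-p\sup_M u}\int_M e^{-pv}\on \le e^{pC}\,C'(M,\o,p),
$$
using $|\sup_M u|\le C$ and the uniform bound from the previous step; this gives $C'(C,p):=e^{pC}C'(M,\o,p)$ as required. I do not expect a genuine obstacle here: the only point requiring care is citing the correct uniform statement (the dependence of the constant on the normalization alone), which is exactly the content of \cite{ze} as used throughout the pluripotential-theoretic literature; everything else is the elementary normalization bookkeeping via \eqref{GreenEq}.
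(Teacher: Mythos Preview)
There is a genuine gap. Your claimed ``upgrade'' from the uniform Skoda exponent $\alpha=\alpha(M,\o)$ to an \emph{arbitrary} $p>0$ by ``splitting $[0,\infty)$ into intervals of length $\alpha$ and iterating'' is false as stated: there is no such iteration, and the statement you assert is simply not true for general $\o$-psh functions. Indeed, if $v\in\PSH(M,\o)$ has a positive Lelong number at some point (e.g.\ $v$ locally like $c\log|z|^2$), then $\int_M e^{-pv}\on=\infty$ for all $p$ large enough, regardless of the normalization $\sup_M v=0$. So a bound valid for \emph{every} $p>0$ cannot possibly follow from the $\sup$-normalization alone; you are missing exactly the ingredient that makes the theorem work.

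What the paper actually uses---and what you dismissed as ``harmless''---is that $u\in\E_1$ forces all Lelong numbers of $u$ to vanish \cite[Corollary 1.8]{gz}. The paper then argues that the set \eqref{SkodaSetEq} is $L^1(M,\on)$-compact (here the bound on $\h{\rm AM}(u)$ is used, via upper semicontinuity of $\h{\rm AM}$ for the $L^1$-topology, to get closedness and not merely boundedness), and applies Zeriahi's theorem \cite[Corollary 3.2]{ze} in its correct form: over an $L^1$-compact family of $\o$-psh functions with Lelong numbers uniformly below a threshold $\nu_0$, one has a uniform bound on $\int_M e^{-pu}\on$ for $p$ in a range depending on $\nu_0$; when $\nu_0=0$, the range is all $p>0$. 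Thus the $\E_1$ hypothesis (equivalently the $\h{\rm AM}$ bound, via Lemma~\ref{AMfinite}) is not cosmetic---it is precisely what feeds the Lelong-number hypothesis of Zeriahi's theorem and makes the conclusion hold for every $p$.
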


\bpf
The map $\vp \mapsto \h{\rm AM}(\vp)$ is 
upper semicontinuous (usc) 
with respect to the
$L^1(M,\on)$-topology, while the map
$\vp \mapsto \sup_M u$ is continuous
with respect to the $L^1(M,\on)$-topology. Thus,
the set \eqref{SkodaSetEq}
is compact with respect to the $L^1(M,\on)$-topology. 
According to \cite[Corollary 1.8]{gz} the elements of this set all have zero Lelong numbers. Hence, the requirements of \cite[Corollary 3.2]{ze} are satisfied finishing the proof.
\epf

\def\diff{\h{\rm diff}}

\subsection{Modified basic functionals arising from holomorphic vector fields}
\lb{ModifiedFunctSubSec}

Let $ \Aut_0(M,\JJJ) $ denote the connected component of the complex Lie group of automorphisms (biholomorphisms) of $(M,\JJJ)$ and denote by  $\aut(M,\JJJ)$ its Lie algebra of infinitesimal automorphisms composed
of real vector fields $X$ satisfying $\calL_X\JJJ=0$,
equivalently,
\beq
\lb{CxStrucEq}
\JJJ[X,Y]=[X,\JJJ Y], \q \forall\, X\in\aut(M,\JJJ), \; \forall\, Y\in\diff(M),
\eeq
where $\diff(M)$ denotes all smooth vector fields on $M$.
Thus $\aut(M,\JJJ)$ is a complex Lie algebra with complex structure $\JJJ$.

The automorphism group $\AutMJz$  acts on $\H$ by pullback:
\beq\lb{AutActionEq}
f.\eta:=f^\star\eta, \qq f\in\AutMJz, \q \eta\in\H.
\eeq
Given the one-to-one correspondence between $\mathcal H$ and $\mathcal H_0$, the group $\AutMJz$  also acts on $\H_0$. The action is described in the next lemma.
\blem
For $\vp \in \mathcal H_0$ and $f \in \AutMJz$ let $f.\vp\in\H_0$ be the unique element such that $f.\ovp=\o_{f.\vp}$.
Then,
\beq
\lb{factionAMEq}
f.\vp=f.0+\vp\circ f, \qq  f\in\AutMJz, \q \vp\in\H_0.
\eeq
\elem

\bpf
Note that \eqref{factionAMEq} is a \K potential for $f^\star\ovp$. Indeed,   $f\in\AutMJ$ implies that $f^\star\i\ddbar\vp=\i\ddbar \vp\circ f$.
That $\h{\rm AM}(f.0+\vp\circ f)=0$ follows from \eqref{AMDifferenceEq} as we have:
$$\h{\rm AM}(f . \vp)=\h{\rm AM}(f . \vp) - \h{\rm AM}(f.0)=\int_M \vp \circ f \sum_{j=0}^n f^\star \o^{n-j}\wedge f^\star \o_\vp^j = \h{\rm AM}(\vp)-\h{\rm AM}(0)=0.$$
\epf

\blem
\lb{dpIsomLemma}
The action of $\AutMJz$ on $\H_0$ is a $d_1$-isometry.
\elem

\bpf
From \eqref{factionAMEq}, 
$$
\frac{d}{dt}f.\vpt=\dot\vpt\circ f,
$$
for any smooth path $t \to \vp_t$ in $\H_0$. Thus, the $d_1$-length of $t \to f.\vpt$
is
$$
\int_0^1 V^{-1}\int_M|\dot\vp_t\circ f| f^\star \o_{\vp_t}^n dt
=
\int_0^1 V^{-1}\int_M|\dot\vp_t| \o_{\vp_t}^n dt,
$$
equal to the $d_1$-length of $\vpt$.
\epf

\blem
\label{ActionExtension}
The action of $\AutMJz$ on $\mathcal H_0$ has a unique $d_1$-isometric extension 
to the metric completion  $\overline{(\H_0,d_1)} =(\mathcal E_1 \cap \h{\rm AM}^{-1}(0),d_1)$.
\elem

\bpf Because $\AutMJz$ acts by $d_1$-isometries, each $f \in \AutMJz$ induces a $1$-Lipschitz continuous self-map of $\mathcal H_0$. By Lemma \ref{LipschitzExt}, such maps have a unique $1$-Lipschitz extension to the completion $\mathcal E_1 \cap \h{\rm AM}^{-1}(0)$ and the extension is additionally $d_1$-isometry. 
By density, the laws governing a group action have to be preserved as well.
\epf

Let $G\subset\AutMJz$ be a subgroup. Then the functional $J_G:\mathcal E_1 \cap \h{\rm AM}^{-1}(0)/G\ra \RR$ is introduced as
\beq
\lb{JGEq}
J_G(Gu):= \inf_{f \in G}  {J}(f.u).
\eeq
We have the follwing estimates:
\blem For $u \in \mathcal E_1 \cap \h{\rm AM}^{-1}(0)$ we have 
\lb{JGPropernessLemma}
\begin{equation}
\label{JGdGEqv}
 \frac{1}{C} J_G(Gu) -C \leq d_{1,G}(G0,Gu) 
\leq  C J_G(Gu) + C,
\end{equation}
where $d_{1,G}$ is the pseudometric of the quotient $\mathcal E_1 \cap \h{\rm AM}^{-1}(0)/G$.
\elem

\bpf
By Lemma \ref{dpIsomLemma}, 
$$
d_{1,G}(G0,Gu)=\inf_{f\in G} d_1(0,f.u).
$$
The result  now follows from Proposition \ref{Jproperness}.
\epf

The infinitesimal action of the Lie algebra $\autMJ$ on $\H$
associated to \eqref{AutActionEq}
naturally induces a vector field $\psi^X$ on $\H$ given by 
\beq
\lb{PsiXVectorFieldEq}
\eta\mapsto\psi^X_\eta\in\CinfM, \q \h{where $\calL_X\eta=\i\ddbar\psi^X_\eta$
 and $V^{-1}\int_Me^{\psi^X_\eta}\eta^n=1$.}
\eeq

Denote the 
$L^2(M,e^{\psi^X_\eta}\etan)$ inner product 
by $\langle\;\cdot\;,\;\cdot\;\rangle_{\psi_\eta^X}$.
The operator 
\beq
\lb{LetaXEq}
L_\eta^X:=\Delta_\eta+X
\eeq
is self-adjoint with respect to $\langle\;\cdot\;,\;\cdot\;\rangle_{\psi_\eta^X}$
whenever $X$ is the gradient vector field 
of $\psi^X_\eta$ with respect to $\eta$, i.e.,
\beq
\lb{XnablaEq}
X=\nabla \psi^X_{\eta}, 
\eeq
where the gradient is with respect
to $g_{\eta}$.
Observe that 
\beq
\lb{psiXvpEq}
\psi^X_{\ovp}=\psi^X_\o+X\vp
\eeq 
since it follows from the definition \eqref{PsiXVectorFieldEq} 
that \eqref{psiXvpEq} must hold
up to an additive constant, say, $C(\vp)$, but  
$$
\baeq
\frac d{dt} 
\int_M e^{\psi_{\o}^X+X(t\nu)}\o_{t\nu}^n
&=
\int_M L_{\o_{t\nu}}^X\nu e^{\psi_{\o}^X+X(t\nu)}\o_{t\nu}^n
\cr
&=
e^{-C(t\vp)}\langle L_{\o_{t\nu}}^X\nu,1\rangle_{\psi_{\o_{t\nu}}^X}=0.
\eaeq
$$
Thus, $C(t\vp)=0$ for each $t$.

\def\TJX{\h{$T_{\h{\notsosmall J} X}$}}

Let 
\beq
\lb{goJEq}
g_{\o}(\,\cdot,\,\cdot):=\o(\,\cdot,\JJJ\,\cdot).
\eeq
Consider the following hypothesis on a vector field 
$X\in\autMJ$:
\beq
\baeq
\lb{AssumpXEq}
&\h{$\bullet\;$ The closure $T=T(X)$ of the one-parameter subgroup  
generated by $\JJJ X$ is a subgroup of  
}
\cr
&\mskip22mu \h{of the isometry group of $(M,g_\o)$}. 
\eaeq
\eeq
As the compact group $T$ is the closure of a commutative subgroup of 
$\AutMJz$ it follows that $T$ is in fact a torus.
For any Lie subgroup $K$ of the isometry group of $(M,g_{\o})$
define the subspace
\beq
\lb{HKEq}
\H_\o^K:=\{\vp\in\H_\o\,:\, \vp \h{ is invariant under $K$}\},
\eeq
and similarly define $\H_0^K$.
According to Theorem \ref{d1CompletionThm}, the $d_1$-metric completion
of $\H_\o^K$ is
$$
\E_1^K:=\{u\in\E_1\,:\, u \h{ is invariant under $K$}\}.
$$

The following lemma is well-known \cite{Zhu,TZ}.
We include a proof since our notation is somewhat different than
in the original sources.

\blem
\lb{AMXDefLemma}
Suppose $X\in\aut(M,\JJJ)$ satisfies
\eqref{XnablaEq} for each $\eta\in\H^T$.
Let $\gamma:[0,1]\ra \H^T_\o$ denote a smooth path with
$\gamma(0)=0, \gamma(1)=\vp$.
The functional
$$
\h{\rm AM}_X(\vp):= V^{-1} \int_{[0,1]\times M}\dot\gamma(t) e^{\psi^X_{\o_{\gamma(t)}}}
\o_{\gamma(t)}^n\w dt
$$
is well-defined independently of the choice of $\gamma$.

\elem

\bpf
Indeed, the 1-form 
$\alpha_X:\vp\mapsto e^{\psi^X_{\ovp}}\ovpn$ on $\H^T_\o$
is closed since if $\nu,\mu\in T_\vp\H^T_\o$, 
and using \eqref{psiXvpEq},
$$
\frac d{dt}\Big|_{0} \alpha_X(\nu)|_{\vp+t\mu}
=
\int_M\nu L_{\ovp}^X \mu e^{\psi^X_{\ovp}}\ovpn
=
\frac d{dt}\Big|_{0} \alpha_X(\mu)|_{\vp+t\nu},
$$
as by our assumption on $X$ the operator $L_\ovp^X$ is self-adjoint
with respect to
$\langle\;\cdot\;,\;\cdot\;\rangle_{\psi_\ovp^X}$
as observed after \eqref{LetaXEq}.
\epf

\bremark
\lb{AMXRemark}
(i) As verified in the proof of Lemma \ref{AMXLemma} (i) below,
equation \eqref{XnablaEq}~follows~from~\eqref{AssumpXEq}.

\noindent
(ii)
We note in passing that from this lemma it follows that $\h{\rm AM}_X$ is monotone: $u \leq v$ implies  $\h{\rm AM}_X(u) \leq \h{\rm AM}_X(v)$. This parallels 
\eqref{AMDifferenceEq}.
\eremark

The next result follows using the arguments in the proofs
of Lemmas \ref{E1capH0Lemma} and \ref{ActionExtension}.

\blem
\lb{H0KCompletionLemma}
The metric completion of
$(\H_0^K,d_1)$ is 
$
\E_1^K\cap \h{\rm AM}^{-1}(0).
$
\elem

The Hodge decomposition implies that every $X\in\autMJ$
can be uniquely written as \cite{ga}
\beq
\lb{XDecompEq}
X=X_H+\nabla \psi^X_\o-\JJJ\nabla \psi^{\JJJsml X}_\o,
\eeq
where $\nabla$ is the gradient with respect to the Riemannian
metric \eqref{goJEq},
and $X_H$ is the $g_{\o}$-Riemannian dual of a 
$g_{\o}$-harmonic $1$-form.

\begin{lemma}
\lb{AMXLemma} 
Assume that $T=T(X)$ satifies \eqref{AssumpXEq}.
Assume also that 
the first Betti number of $M$ satisfies $b_1(M)=0$.

\noindent
(i) $\h{\rm AM}_X:\mathcal H_\o^T \to \Bbb R$ admits a unique $d_1$-continuous extension
to $\E_1^T$. 

\noindent
(ii) $\h{\rm AM}_X$ is usc with respect to the  $L^1(M,\o^n)$ topology of $\mathcal E_1^T$.
\end{lemma}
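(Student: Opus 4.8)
The plan is to imitate the proof of Lemma~\ref{E1capH0Lemma}(i): first produce a \emph{uniform} $C^0$-bound on the normalized Hamiltonians $\psi^X_\ovp$, valid over all $\vp\in\H^T_\o$, then deduce that $\h{\rm AM}_X$ is $d_1$-Lipschitz on $\H^T_\o$, and finally extend by density using Lemma~\ref{LipschitzExt} and the fact (Theorem~\ref{d1CompletionThm}) that $\E_1^T$ is the $d_1$-completion of $\H^T_\o$. I would begin by recording \eqref{XnablaEq}, which is needed for $\h{\rm AM}_X$ to be well defined. Since $b_1(M)=0$, the harmonic summand $X_H$ in \eqref{XDecompEq} vanishes, so $X=\nabla\psi^X_\o-\JJJ\nabla\psi^{\JJJ X}_\o$; by \eqref{AssumpXEq} the field $\JJJ X$ is a holomorphic Killing field of $g_\o$, and since $b_1(M)=0$ such a field is Hamiltonian, which forces $\psi^{\JJJ X}_\o$ to be constant and hence $X=\nabla_{g_\o}\psi^X_\o$. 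Using this together with \eqref{psiXvpEq} and the $T$-invariance $\calL_{\JJJ X}\vp=0$, a direct computation of $\iota_{\JJJ X}\ovp$ gives $X=\nabla_{g_\eta}\psi^X_\eta$ for every $\eta\in\H^T$, so Lemma~\ref{AMXDefLemma} applies.

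The crux is the uniform bound. Put $Z:=\{X=0\}$, which is independent of $\vp$; if $X\equiv 0$ then $\h{\rm AM}_X=\h{\rm AM}$ and the claim follows from Lemma~\ref{E1capH0Lemma}, so assume $Z\neq\emptyset$ (which holds, $\JJJ X$ being a nontrivial Hamiltonian field). By \eqref{XnablaEq} the critical set of $\psi^X_\ovp$ is exactly $Z$ for every $\vp\in\H^T_\o$, so $\psi^X_\ovp$ attains both its maximum and its minimum on $Z$; and by \eqref{psiXvpEq}, for $p\in Z$,
$$\psi^X_\ovp(p)=\psi^X_\o(p)+(X\vp)(p)=\psi^X_\o(p).$$
Hence $\psi^X_\ovp$ has the same range $[\min_Z\psi^X_\o,\max_Z\psi^X_\o]$ for \emph{every} $\vp\in\H^T_\o$; in particular $\|\psi^X_\ovp\|_{L^\infty(M)}\le C_0:=\|\psi^X_\o\|_{L^\infty(M)}$, a bound independent of $\vp$.

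Given this, part (i) is routine. For $u,v\in\H^T_\o$ and any smooth $T$-invariant path $\gamma$ from $u$ to $v$, Lemma~\ref{AMXDefLemma} gives $\h{\rm AM}_X(v)-\h{\rm AM}_X(u)=V^{-1}\int_{[0,1]\times M}\dot\gamma_t\,e^{\psi^X_{\o_{\gamma_t}}}\o_{\gamma_t}^n\w dt$, whence $|\h{\rm AM}_X(v)-\h{\rm AM}_X(u)|\le e^{C_0}V^{-1}\int_0^1\!\int_M|\dot\gamma_t|\,\o_{\gamma_t}^n\,dt=e^{C_0}\ell_1(\gamma)$. Taking the infimum over such $\gamma$ — which still equals $d_1(u,v)$, the $d_1$-geodesic between $T$-invariant potentials being $T$-invariant and its length being approximated by smooth $T$-invariant curves \cite{da4} — yields $|\h{\rm AM}_X(v)-\h{\rm AM}_X(u)|\le e^{C_0}d_1(u,v)$. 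Since $\H^T_\o$ is $d_1$-dense in the complete space $\E_1^T$, Lemma~\ref{LipschitzExt} then produces the unique $e^{C_0}$-Lipschitz, hence $d_1$-continuous, extension of $\h{\rm AM}_X$ to $\E_1^T$, proving (i).

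For (ii) I would mirror the $L^1$-upper semicontinuity of $\h{\rm AM}$ used in the proof of Theorem~\ref{ZeriahiCompactnessThm}. By Remark~\ref{AMXRemark}(ii), the $d_1$-continuity from (i), and Lemma~\ref{L1Lemma}(ii), one has $\h{\rm AM}_X(u)=\inf_k\h{\rm AM}_X(\max(u,-k))$ for $u\in\E_1^T$, a decreasing limit. Each $u\mapsto\h{\rm AM}_X(\max(u,-k))$ is $L^1$-usc on $\E_1^T$: the truncation $u\mapsto\max(u,-k)$ is $L^1$-continuous, $\sup_M u$ is uniformly bounded on $L^1$-bounded sets by \eqref{GreenEq}, and on uniformly bounded families the proof for $\h{\rm AM}$ carries over verbatim, the only new ingredient being the two-sided comparison $e^{-C_0}\ovpn\le e^{\psi^X_\ovp}\ovpn\le e^{C_0}\ovpn$ from the previous step; an infimum of $L^1$-usc functionals is $L^1$-usc. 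The main obstacle is the uniform bound — and within it, the point to get right is that the \emph{entire} range of $\psi^X_\ovp$, not merely a one-sided bound, is independent of $\vp$, which rests squarely on \eqref{XnablaEq} and hence on a careful use of both hypotheses $b_1(M)=0$ and \eqref{AssumpXEq}; a secondary point is that this bound is needed only for smooth $T$-invariant $\vp$, the passage to $\E_1^T$ being handled by Lipschitz extension rather than by estimating singular potentials pointwise.
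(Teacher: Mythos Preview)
Your proof is correct and follows the paper's overall strategy (Lipschitz estimate on $\H^T_\o$ then Lemma~\ref{LipschitzExt}), but your derivation of the uniform bound $\|\psi^X_\ovp\|_{L^\infty}\le C_0$ is genuinely different and more elementary. The paper obtains \eqref{psiXEq} by interpreting $\psi^X_\ovp$ as a component of the moment map of the $T$-action on $(M,\ovp)$ and invoking the Atiyah--Guillemin--Sternberg theorem to conclude that the image of the moment map is independent of the symplectic form in the cohomology class. Your argument bypasses this: since $X=\nabla_{g_\ovp}\psi^X_\ovp$, the extrema of $\psi^X_\ovp$ lie on the $\vp$-independent zero set $Z=\{X=0\}$, and \eqref{psiXvpEq} shows $\psi^X_\ovp|_Z=\psi^X_\o|_Z$, so the $\sup$ and $\inf$ are literally unchanged. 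This is shorter, uses nothing beyond the gradient identity \eqref{XnablaEq}, and in fact recovers the same conclusion (equal image) that the paper extracts from convexity. For part (ii) the paper simply appeals to monotonicity (Remark~\ref{AMXRemark}(ii)) and cites \cite[Proposition 2.15]{bwn}; your sketch via truncations $\max(u,-k)$ is precisely how that cited result is proved, so there is no substantive divergence there.
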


\begin{proof} 	
We claim that $\h{\rm AM}_X$ is $d_1$-Lipschitz continuous. 
We also claim that $X$ is a gradient field
with respect to all elements of $\H_\o^T$
so that $\h{\rm AM}_X$ is well-defined by Lemma \ref{AMXDefLemma}.
Lemma \ref{LipschitzExt} then gives (i). To prove these claims
observe first that as 
follows from \eqref{AssumpXEq},  the torus $T$ acts Hamiltonially on $(M,\o)$.
By \eqref{psiXvpEq} and \eqref{PsiXVectorFieldEq},
\beq
\lb{JXInvEq}
\psi^{\JJJsml X}_\ovp=\psi^{\JJJsml X}_\o=0
\eeq
for every $\vp\in\H^T_0$ since $\o$ is $T_{\JJJsml X}$-invariant. 
Since 
$b_1(M)=0$, $X_H=0$ in  \eqref{XDecompEq};
in particular, $X$ is a gradient vector field with 
respect to all $g_{\ovp}$ with $\vp\in\H^T_0$, i.e.,
\eqref{XnablaEq} holds.
Thus,  $\iota_{\JJJsml X}\ovp = d\psi^{X}_\ovp$.
In other words, $\psi^X_\ovp$ is the restriction of the moment map of the  $T$-action to the vector field ${\JJJ X}$ with respect to the symplectic form $\ovp$. 
By a general result of Atiyah--Guillemin--Sternberg the image of a moment map of a toric action is independent of the choice of symplectic form in the cohomology class, and therefore 
\beq
\lb{psiXEq}
|\psi^{X}_\ovp|\le C,
\eeq
for some uniform $C>0$ independent of $\vp\in\H_\o^X$. 
Thus, if $\gamma_t$ is any smooth path in $\H_0^X$ with endpoints
$u$ and $v$,
\begin{equation}\label{AMXdiff}
\baeq 
|\h{\rm AM}_X(u)-\h{\rm AM}_X(v)| 
& 
\le
\int_0^1 V^{-1}\int_M|\dot\gamma_t| e^{\psi^X_{\gamma_t}}\o_{\gamma_t}^n
\w dt
\cr
&
\leq 
C\int_0^1 
V^{-1}\int_M|\dot\gamma_t|\o_{\gamma_t}^n dt
\le 
Cl_1(\gamma).
\eaeq
\end{equation}
Taking the infimum over all such $\gamma$ we obtain
$$
|\h{\rm AM}_X(u)-\h{\rm AM}_X(v)| 
\le
C d_1(u,v),
$$
as desired.\\
\noindent (ii) This is a consequence of 
the monotonicity of $\h{\rm AM}_X$ 
(Remark \ref{AMXRemark} (ii))
and 
is proved in \cite[Proposition 2.15]{bwn}.
\end{proof}

\subsection{Condition \ref{a2}: action functionals and their
lsc extensions}
\lb{ActionSubSec}

Each set of canonical \K metrics we consider in this article
can be defined as the minimizers of an appropriate action functional
over a space $\calR$ of `regular' potentials.
It is crucial for us, however, to understand the greatest $d_1$-lsc
extensions of these functionals to the corresponding $d_1$-metric
completion $\overline{\calR}$.
This is the main goal of the present rather long subsection.
We emphasize that it is crucial for us to obtain explicit formulas
for the lsc extensions of our functionals in order to be able to 
apply existing results concerning regularity
of minimizers over $\overline{\calR}$. That is, if we only had
an abstract, but not explicit, extension, this would not have
allowed us to conclude property \ref{p3}.

\subsubsection{\KE (edge) metrics}

For an introduction to K\"ahler edge geometry 
we refer to the expository article \cite{R14}. 
Let $D\subset M$ denote a smooth divisor, and let $\beta\in(0,1]$.
Suppose that $h$ is a smooth Hermitian
metric on the line bundle $L_{D}$ associated to $D$, 
and that $s$ is a global holomorphic section of
$L_{D}$, so that $D=s^{-1}(0)$. 
Let $\o$ be a smooth \K metric on $M$ and define
\beq
\label{oEq}
\o_c:=\o+c\i\ddbar(|s|^2_{h})^{\be},
\eeq
For $c>0$ small enough, $\o_c$ is a 
smooth \K metric away from $D$ \cite[Lemma 2.2]{JMR}.
Fix such a $c$ once and for all.

\bdefin
\lb{KEEDef}
A K\"ahler current $\eta$ on $M$ is called
a {\it \K edge form of angle $\beta$} if it is a smooth K\"ahler form on
$M \setminus D$ and satisfies
$\o_c/C\le \eta \le C\o_c$ for some constant $C=C(\eta)>0$.
\edefin

The set of \K edge potentials (of angle $\beta$) is denoted by
$$
\mathcal H^\beta_\o:= 
\{ u \in \PSH(M,\o) :\ \o_u \textup{ is a K\"ahler edge form of angle }\beta\}.
$$
Set also,
$$
\mathcal H^\beta_0:=\mathcal H^\beta \cap \h{\rm AM}^{-1}(0).
$$

There are plenty of K\"ahler edge metrics, according to the next result.

\begin{lemma}
\label{HbetaDense}  
The $d_1$-metric completion of $\mathcal H^\beta_\o$ is $\mathcal E_1$.
\end{lemma}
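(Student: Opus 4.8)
The plan is to prove Lemma \ref{HbetaDense} by showing two inclusions at the level of metric completions: that $\mathcal H^\beta_\o \subset \mathcal E_1$ (so its completion is contained in $\mathcal E_1$), and conversely that $\mathcal H^\beta_\o$ is $d_1$-dense in a set already known to be $d_1$-dense in $\mathcal E_1$, namely the smooth potentials $\mathcal H_\o$. For the first inclusion, note that a \K edge form $\eta = \o_u$ of angle $\beta$ satisfies $\eta \le C\o_c$ pointwise on $M\setminus D$, and $\o_c$ has bounded potentials; hence $\h{\rm AM}(u) > -\infty$ by monotonicity \eqref{AMMonEq} and comparison with the bounded potential of $\o_c$, so $u \in \mathcal E_1$ by Lemma \ref{AMfinite}. (Equivalently, $u$ is comparable to a bounded $\o$-psh function up to the action of $c\i\ddbar(|s|_h^2)^\beta$, which is itself in $\mathcal E_1$, and one checks $|u|$ is $\ovpn$-integrable using $\ovpn \le C\o_c^n$.)

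For the density direction, the key point is to approximate an arbitrary $u \in \mathcal E_1$ in $d_1$ by elements of $\mathcal H^\beta_\o$. First I would reduce to approximating smooth potentials: since $\mathcal H_\o$ is $d_1$-dense in $\mathcal E_1$ by Theorem \ref{d1CompletionThm}, it suffices to approximate any $\vp \in \mathcal H_\o$. Given such a smooth $\vp$, the natural candidate is $\vp_\eps := \vp + \eps(|s|_h^2)^\beta$ for small $\eps > 0$ (after rescaling $h$ so that $|s|_h < 1$). Then $\o_{\vp_\eps} = \ovp + \eps\i\ddbar(|s|_h^2)^\beta$, which for $\eps$ small enough is a \K edge form of angle $\beta$: away from $D$ it is smooth and positive, and near $D$ it has precisely the cone singularity $\eps\,\o_c/c$-type behavior, so $\o_{\vp_\eps} \ge \ovp/C$ and $\o_{\vp_\eps} \le C\o_c$ by choosing $\eps$ comparable to $c$. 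Hence $\vp_\eps \in \mathcal H^\beta_\o$. Since $0 \le \eps(|s|_h^2)^\beta \le \eps$ and $(|s|_h^2)^\beta \to 0$ pointwise off $D$ as $\eps \to 0$, we have $\vp_\eps \to \vp$ uniformly (indeed $\|\vp_\eps - \vp\|_{L^\infty} \le \eps$), and in particular $\vp_\eps \to \vp$ decreasingly; by Lemma \ref{L1Lemma}(ii) this gives $d_1(\vp_\eps, \vp) \to 0$. This establishes density of $\mathcal H^\beta_\o$ in $\mathcal H_\o$, hence in $\mathcal E_1$, and combined with the first inclusion and completeness of $(\mathcal E_1, d_1)$ (Theorem \ref{d1CompletionThm}) the completion of $(\mathcal H^\beta_\o, d_1)$ is exactly $\mathcal E_1$.

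The main obstacle is the verification that $\o_{\vp_\eps}$ is genuinely a \K edge form of angle $\beta$ in the precise sense of Definition \ref{KEEDef}, i.e., the two-sided bound $\o_c/C' \le \o_{\vp_\eps} \le C'\o_c$ for some constant. The upper bound $\o_{\vp_\eps} \le C'\o_c$ is straightforward once $\eps$ is small relative to $c$ since $\ovp \le C\o \le C\o_c$ and $\eps\i\ddbar(|s|_h^2)^\beta \le (\eps/c)(\o_c - \o)$. The lower bound is more delicate near $D$: one needs that $\i\ddbar(|s|_h^2)^\beta$, while not itself positive, when added to the strictly positive smooth form $\ovp$ still dominates a multiple of $\o_c$; this is exactly the local computation in the model cone metric, and it is precisely the content of \cite[Lemma 2.2]{JMR} applied with $\ovp$ in place of $\o$ (valid since $\ovp$ is uniformly equivalent to $\o$ on the compact manifold $M$). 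I would cite that lemma rather than redo the local cone-metric estimate. A secondary technical point is ensuring $\vp_\eps$ lies in $\PSH(M,\o)$ and not merely in $\PSH(M, \ovp$-background): since $\o_{\vp_\eps} \ge 0$ as a current by the edge-form property, $\vp_\eps \in \PSH(M,\o)$ automatically.
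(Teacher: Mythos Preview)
Your proposal is correct and follows essentially the same approach as the paper: approximate an arbitrary smooth potential $\vp\in\mathcal H_\o$ by $\vp_\eps=\vp+\eps(|s|_h^2)^\beta$, invoke \cite[Lemma 2.2]{JMR} to see $\vp_\eps\in\mathcal H^\beta_\o$ for small $\eps$, observe $\vp_\eps$ decreases to $\vp$ so $d_1(\vp_\eps,\vp)\to0$ by Lemma \ref{L1Lemma}(ii), and conclude by density of $\mathcal H_\o$ in $\mathcal E_1$. Your write-up is more detailed than the paper's (you address the inclusion $\mathcal H^\beta_\o\subset\mathcal E_1$ and the two-sided edge bound explicitly, whereas the paper simply cites \cite[Lemma 2.2]{JMR} and moves on), but the argument is the same.
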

\begin{proof}
Let $u \in \mathcal H_\o$, and write
$$
u_k:= u + \frac{1}{k}(|s|^{2}_h)^\beta.
$$
For $k$ large enough $u_k \in \mathcal H^\beta_\o$ \cite[Lemma 2.2]{JMR}. 
As $u_k$ is decreasing pointwise to $u$, Lemma \ref{L1Lemma} implies that $d_1(u_k,u) \to 0$. As $\mathcal H_\o$ is dense in $\mathcal E_1$, it follows that the metric completion of $\mathcal H_\o^\beta$ is $\mathcal E_1$.
\end{proof}

We turn our attention to \KE edge metrics. Suppose that
\beq \label{ConicCohomCond}
c_1(M)-(1-\be)[D]=\mu[\o_0]/2\pi.
\eeq
\beq\label{foEq}
\i\ddbar f_{\eta,\beta} =\Ric\,\eta-2\pi(1-\be)[D]-\mu\eta,
\q
\int_M e^{f_{\eta,\beta}}\etan=\int_M \etan.
\eeq
A \K edge form $\eta$ is called \KE edge (KEE) 
when $f_{\eta,\beta}=0$.
We assume from now and on that $\mu>0$ 
since the existence problem in the case $\mu\le 0$ is
settled in \cite{JMR}. In fact after rescaling the K\"ahler class 
we assume that $\mu=1$.

We record the following estimate.
\blem
\lb{RicciPotentialLemma}
Let $\o$ be a smooth \K form on $M$ and let
$\eta$ be either a smooth \K form or a \K edge
form of angle $\be$. Then,
$e^{f_{\eta,\beta}} \in L^p(M,\on)$ for some $p>1$.
\elem

\bpf
When $\eta$ is smooth this 
is a direct consequence of the Poincar\'e--Lelong formula.
When $\eta$ is a \K edge form of angle $\be$ then
$f_{\eta,\be}$ is actually continuous \cite[\S4]{JMR}.
\epf

The Berger--Moser--Ding energy
(or Ding functional for short) \cite{Ding1988} is defined by
\beq
\label{FbetaEq}
F^\be(\vp)=F^\be(\ovp)
:=-\h{\rm AM}(\vp)-\log \frac{1}{ V}\int_M e^{f_{\o,\beta}-\vp}{\o}^n, \ \ \vp \in \mathcal H^\beta.
\eeq
The Mabuchi K-energy $E^\be:\H^\be_\o\ra\RR$ is closely related and defined by
\cite[(5.27)]{R14},\cite{Mabuchi87}, 
\beq
\label{EbetaEq}
E^\beta(\vp) 
:=
\Ent(e^{f_{\o,\beta}} \o^n,\o^n_\vp)
-  \h{\rm AM}(\vp)
+ \V\int_M \vp\ovp^n.
\eeq
Observe that: (i) \eqref{EbetaEq} differs from \cite[(5.27)]{R14}
by a constant equal to $\V\int f_{\o,\beta}\on$, (ii)
the term $-(I-J)(\o,\ovp)$ there equals the last two terms in 
\eqref{EbetaEq} by \eqref{IJIEq}. 
Here,
\beq
\Ent(\nu,\chi)=
\frac{1}{V}\int_M\log\frac{\chi}{\nu}\chi,
\eeq
is the entropy of the measure $\chi$ with respect to the measure $\nu$. The following formula relating $F^\beta$ and $E^\beta$ was established by Ding--Tian \cite{dt}:
\beq
\label{DingTianEq}
E^\be(\vp)=  F^\be(\vp)-\frac1V\int 
f_{\ovp,\beta}\ovpn.
\eeq
Jensen's inequality gives that $V^{-1}\int_M f_{\o_{\vp,\beta}} \o_\vp^n\leq \log V^{-1}\int_M e^{f_{\o_\vp,\beta}}\o_\vp^n = 0$, hence there exists $C>0$ such that
\beq\label{EbetaFbetaIneq}
E^\beta(\vp) \geq F^\beta(\vp) 
\eeq
The critical points in $\H^\be_\o$ of both $F^\be$ and $E^\be$  are precisely
\KE (edge) potentials. 
The next result verifies condition \ref{a2} for these functionals.

\bprop
\label{FbetaExt}
Formula \eqref{FbetaEq} gives the unique $d_1$-continuous extension of 
$F^\be:\mathcal H^\beta \to \Bbb R$ to a functional on $\E_1$.
\eprop

\begin{proof} 
Lemma \ref{L1Lemma} (i) provides a $d_1$-continuous extension of $\h{\rm AM}$.
To deal with the other term we prove the following lemma.

\begin{lemma} 
\label{ExtLemma} 
Let $e^a \in L^{p}(M,\on)$ for some $p>1$.
Then the functional defined on $\mathcal E_1$ by
$$
\tilde F(\vp) = \int_M e^{a-\vp}\o^n,
$$ 
is $d_1$-continuous.
\end{lemma}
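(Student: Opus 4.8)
The plan is to show that $\tilde F$ is $d_1$-continuous by combining a $d_1$-compactness statement with pointwise convergence and a uniform integrability (Skoda-type) bound. First I would observe that $d_1$-convergence $u_k\to u$ in $\E_1$ implies, by Lemma \ref{L1Lemma}(i), both $u_k\to u$ in $L^1(M,\on)$ and $\h{\rm AM}(u_k)\to\h{\rm AM}(u)$; in particular $\{\h{\rm AM}(u_k)\}$ is bounded, and by the Green formula \eqref{GreenEq} together with the $L^1$ convergence, $\{\sup_M u_k\}$ is bounded as well (up to normalizing by the additive constant; note $\tilde F(\vp)$ scales by $e^{-c}$ under $\vp\mapsto\vp+c$, so one must either track the normalization or, more cleanly, observe that all quantities involved are equivariant under adding constants and reduce to the case $\sup_M u_k=0$). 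Thus $\{u_k\}\cup\{u\}$ lies in a set of the form \eqref{SkodaSetEq}.

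Next, Theorem \ref{ZeriahiCompactnessThm} applied with exponent $q=p'$, the conjugate exponent of $p$ (so $1/p+1/p'=1$), yields a uniform bound $\int_M e^{-p'u_k}\o^n\le C'$. Combined with $e^a\in L^p(M,\on)$ and H\"older's inequality, this shows that the family $\{e^{a-u_k}\}_k$ is bounded in $L^r(M,\on)$ for some $r>1$ (one can take $r$ with $1/r=1/p+1/p'\cdot(p'/p')$; concretely choose $s>1$ close to $1$ so that $ps<p$-neighborhood works and $e^{-psu_k}$ is still uniformly integrable by Theorem \ref{ZeriahiCompactnessThm} with exponent $ps\cdot$conjugate — the point is only that a small power $r>1$ is available). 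A family bounded in $L^r$ with $r>1$ is uniformly integrable with respect to $\o^n$.

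Then I would extract from $d_1$-convergence (hence $L^1(M,\on)$-convergence) a subsequence along which $u_{k_j}\to u$ $\o^n$-a.e., so that $e^{a-u_{k_j}}\to e^{a-u}$ pointwise a.e.; uniform integrability plus pointwise a.e. convergence gives, by Vitali's convergence theorem, $\int_M e^{a-u_{k_j}}\o^n\to\int_M e^{a-u}\o^n$, i.e. $\tilde F(u_{k_j})\to\tilde F(u)$. Since every subsequence of $\{u_k\}$ has a further subsequence along which $\tilde F$ converges to the same limit $\tilde F(u)$, the full sequence converges, establishing $d_1$-continuity. The main obstacle is the bookkeeping around the additive-constant normalization — one needs that both the Skoda bound and the $L^1$ control really do apply after reducing to $\sup_M u_k=0$, and that $\tilde F$'s transformation under constants is harmless — together with pinning down the exponents so that a genuine $r>1$ survives in the H\"older estimate; the rest is the standard Vitali argument.
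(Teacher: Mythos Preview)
Your argument is correct but takes a different route from the paper's. The paper avoids Vitali's theorem entirely: it uses the elementary inequality $|e^x-e^y|\le|x-y|(e^x+e^y)$ to write
\[
\Big|\int_M (e^{a-u_k}-e^{a-u})\,\o^n\Big|\le\int_M|u_k-u|\,(e^{a-u_k}+e^{a-u})\,\o^n,
\]
and then applies a three-factor H\"older inequality with $1/q+1/p+1/s=1$, bounding the right side by $\|u_k-u\|_{L^q}\|e^a\|_{L^p}(\|e^{-u_k}\|_{L^s}+\|e^{-u}\|_{L^s})$. The first factor tends to zero (all $L^q$ norms are comparable on $\PSH(M,\o)$ and $u_k\to u$ in $L^1$), while the others are bounded by hypothesis and by the Skoda--Zeriahi estimate. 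This is shorter and yields an explicit modulus of continuity, whereas your Vitali approach is equally valid and more measure-theoretic in flavor. Two remarks on your write-up: the worry about additive constants is unnecessary, since $d_1(u_k,u)\to0$ forces $d_1(0,u_k)$ bounded, whence \eqref{d1CharFormula} and \eqref{GreenEq} bound $|\sup_M u_k|$ directly, and Lemma~\ref{L1Lemma}(i) bounds $|\h{\rm AM}(u_k)|$ --- no renormalization is needed; and your exponent bookkeeping for the $L^r$ bound is garbled --- the clean choice is any $1<r<p$, after which H\"older with exponents $p/r$ and $p/(p-r)$ gives $\|e^{a-u_k}\|_{L^r}\le\|e^a\|_{L^p}\|e^{-u_k}\|_{L^{rp/(p-r)}}$, the last factor uniformly controlled by Theorem~\ref{ZeriahiCompactnessThm}.
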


\begin{proof} 
Let $u_k,u \in \mathcal E_1$ be such that $d_1(u_k,u) \to 0$. We have to argue that $\tilde F(u_k) \to \tilde F(u)$. As $d_1(0,u_j)$ is bounded it follows 
from Lemma \ref{L1Lemma} (i) and \eqref{GreenEq}
that for some $C>0$ independent of $j$,
\beq
\lb{supAMEq}
|\sup_M u_j|, \ |\h{\rm AM}(u_k)| \leq C.
\eeq
Hence, Theorem \ref{ZeriahiCompactnessThm} implies there exists $C'(s)>0$ 
independent of $j$ such that
\begin{equation}
\label{Zeriahiest1}\int_M e^{-su_j}\o^n \leq C', \ j \in \Bbb N,
\end{equation}
for all $s \geq 1$. Since $x\mapsto e^x$ is convex,
\beq
\lb{convExpEq}
|e^a - e^b| \leq |a-b|\max\{e^a,e^b\}\le |a-b|(e^a+e^b). 
\eeq
Suppose that $1/q + 1/p + 1/s=1$. 
Then, \eqref{convExpEq} and H\"older's inequality imply that
\begin{flalign}\label{expIntegralEst}
\Big|
\int_M  
\Big(e^{a  - u_{k}} 
- e^{a - u}\Big)\o^n
\Big| 
&\leq 
\int_M |u_{k}-u|(e^{a- u_{k}}+e^{a- u})\o^n \\
&\leq 
||u_{k}-u||_{L^q(M,\on)}
||e^a||_{L^p(M,\on)}
(||e^{-u_{k}}||_{L^s(M,\on)}+||e^{-u_{k}}||_{L^s(M,\on)}).\nonumber
\end{flalign}
The first term of this last expression converges to zero
since by Lemma \ref{L1Lemma} (i), $\int_M |u_{k} -u|\o^n \to 0$ while
all $L^s$ topologies are equivalent on $\PSH(M,\o)$.
The second term is bounded by hypothesis, while 
the third is bounded by \eqref{Zeriahiest1}.
\epf

Thus, the proposition follows from the lemma by setting 
$a=f_{\o,\beta}$, Lemma \ref{RicciPotentialLemma}
and the $d_1$-density of 
$\mathcal H^\beta$ in $\mathcal E_1$ proved in Lemma \ref{HbetaDense}.
\epf

\bprop
\label{EbetaExt}
Formula \eqref{EbetaEq} gives the greatest $d_1$-lsc extension of $E^\beta:\mathcal H^\beta \to \Bbb R$ to a functional on $\E_1$.
\eprop

A more precise version of this result is contained in the forthcoming paper \cite{bdl}, but for completeness we give a proof here:

\bpf
The last two summands of \eqref{EbetaEq} can be rewritten as
\beq
\lb{last2summandsEq}
n\h{\rm AM}(\vp)-(n+1)\Big[\h{\rm AM}(\vp)-\frac{\V}{n+1}\int_M\vp\ovpn\Big]
=
n\h{\rm AM}(\vp)
- \V \sum_{j=0}^{n-1}\int_M \vp \wedge \ovp^{j} \wedge \o^{n-j}. 
\eeq
Combining Lemma \ref{L1Lemma} (i)
and the following lemma (with $\alpha=\o$) it follows that \eqref{last2summandsEq}
is $d_1$-continuous.

\bremark
At this stage, we could have also simply used \eqref{EbetaEq}
and proved
 that $\vp\mapsto\int_M\vp\ovpn$ is $d_1$-continuous.
We choose, however, to use the somewhat more complicated formula
\eqref{last2summandsEq} since then essentially the same arguments
allow us to deal with the K-energy on a general \K class (see 
Proposition \ref{EcscExtProp}). 
\eremark

\blem
\lb{ExtLemma2}
Suppose $\alpha$ is a smooth closed $(1,1)$-form on $M$. The functional defined on $\mathcal E_1$ by
$$
\tilde E(\vp):= 
\V \sum_{j=0}^{n-1}\int_M \vp \alpha \wedge \ovp^{j} \wedge \o^{n-1-j}
$$
is $d_1$-continuous and bounded on $d_1$-bounded subsets of $\mathcal E_1$.
\elem

\bpf
As in the proof of Lemma \ref{E1capH0Lemma} (i), $\tilde E$ indeed
makes sense on $\E_1$. Now, let $u_k,u \in \mathcal E_1$ 
be such that $d_1(u_k,u) \to 0$. 
An argument similar to that yielding \eqref{AMDifferenceEq} shows that
$$
\tilde E(u) -\tilde E(u_k)= 
\V\int_M \sum_{j=0}^{n-1} (u-u_k) \alpha \wedge \o_{u}^{j} \wedge \o_{u_k}^{n-1-j}.
$$
It is clear that for some $D>0$ we have $-D\o \leq \alpha \leq D\o$. Thus, observing that
$\o_{(u+u_k)/{4}}=\o/2+\o_v/4+\o_u/4$,
\beq\label{integralest}
|\tilde E(u) - \tilde E(u_k)| \leq C \int_M |u-u_k| 
\o_{(u+u_k)/{4}}^n.
\eeq
By \cite[Corollary 5.7]{da4} and its proof, for each $R>0$ there exists 
$f_R:\Bbb R \to \Bbb R$ continuous with $f_R(0)=0$ such that
\beq \label{da4CorollaryEst}
\int_M |v-w|\o_h^n \leq f_R(d_1(v,w)),
\eeq
for any $v,w,h \in \mathcal E_1\cap\{\vp\,:\,d_1(0,\vp)\le R\}$.
Using this, to show that \eqref{integralest} converges to $0$, it is enough to argue that $d_1(0,(u+u_k)/4)$ is uniformly bounded. For this we use  \cite[Lemma 5.3]{da4} that says that there exists $C>1$ such that $d_1(v,(w+v)/2) \leq C d_1(v,w)$ for any $v,w \in \mathcal E_1$. Using this several times and the triangle inequality,
\begin{flalign*}
d_1(0 ,(u+u_k)/4) \leq & Cd_1(0,(u+u_k)/2) \leq C(d_1(0,u)  + d_1(u,(u+u_k)/2)) \\
\leq & C^2(d_1(0,u) + d_1(u,u_k)).
\end{flalign*}
This last term is uniformly bounded, showing that $\tilde E$ is $d_1$-continuous. Also, by \eqref{da4CorollaryEst} it follows that $\tilde E$ is bounded on $d_1$-bounded subsets of $\mathcal E_1$.
\end{proof}

To prove Proposition \ref{EbetaExt} it thus remains to deal with the entropy term.

First, we claim this term can be extended to $\E_1$ in a $d_1$-lsc fashion.
Indeed, $d_1$-convergence implies 
weak convergence of volume measures  \cite[Theorem 5 (i)]{da4}. 
By the display following (5.29) in \cite{R14},
the map $\chi \to \Ent(\mu,\chi)$ is a supremum of
continuous maps with respect to 
weak convergence of measures, hence is lsc with respect to this
same convergence. 
These last two statements together imply that 
the map $u \to \Ent(e^{f_\o,\beta}\o^n,\o_u^n)$ is $d_1$-lsc,
as claimed.

Second, the following result shows that thus extended to $\E_1$, 
the entropy is the greatest $d_1$-lsc extension of its restriction
to $\H^\be_\o$. The result is due to a forthcoming paper  where much more precise results are proved \cite{bdl}:

\begin{lemma} Given $u \in \mathcal E_1$, there exists $u_k \in \mathcal H^\beta$ such that $d_1(u_k,u) \to 0$ and $$\Ent(e^{f_{\o,\beta}}\o^n,\o_{u_k}^n) \to \Ent(e^{f_{\o,\beta}}\o^n,\o_{u}^n).$$
\end{lemma}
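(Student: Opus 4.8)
\emph{Proof idea.} The plan is to build the recovery sequence by prescribing its Monge--Amp\`ere measure. If $\Ent(e^{f_{\o,\beta}}\o^n,\o_u^n)=+\infty$ there is nothing to do: any $u_k\in\H_\o^\beta$ with $d_1(u_k,u)\to0$ (such exist by Lemma \ref{HbetaDense}) works, since the $d_1$-lower semicontinuity of $v\mapsto\Ent(e^{f_{\o,\beta}}\o^n,\o_v^n)$ proved above forces $\Ent(e^{f_{\o,\beta}}\o^n,\o_{u_k}^n)\to+\infty$. So assume $\Ent(e^{f_{\o,\beta}}\o^n,\o_u^n)<\infty$. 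Write $\o_u^n=h\,\o_c^n$ and $\psi:=\o_c^n/\o^n$; recall $\psi$ is smooth and strictly positive on $M\sm D$, lies in $L^p(M,\o^n)$ for $1<p<1/(1-\be)$, and blows up like $(|s|^2_h)^{\be-1}$ along $D$, while $f_{\o,\beta}$ has a logarithmic singularity of opposite sign there, so $\log\psi-f_{\o,\beta}$ is bounded below on $M$. Since $\Ent(e^{f_{\o,\beta}}\o^n,\o_u^n)=\frac{1}{V}\int_M h\,(\log h+\log\psi-f_{\o,\beta})\,\o_c^n$, finiteness of the entropy makes $h(\log h)^+$, $h(\log\psi-f_{\o,\beta})^+$ and $(\log\psi-f_{\o,\beta})^+$ all $\o_c^n$-integrable. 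Set $h_k:=\max(\min(h,k),1/k)$, let $\tilde h_k$ be smooth on $M\sm D$, valued in $[\tfrac{1}{2k},2k]$, and approximating $h_k$ closely enough in a suitably weighted $L^1$, and put $\mu_k:=c_k\tilde h_k\,\o_c^n$ with $c_k>0$ chosen so that $\int_M\mu_k=V$. The integrability facts above supply dominating functions near $D$, so dominated convergence gives $c_k\to1$ and $\Ent(e^{f_{\o,\beta}}\o^n,\mu_k)\to\Ent(e^{f_{\o,\beta}}\o^n,\o_u^n)$.

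Next, $\tilde h_k$ being bounded and $\psi\in L^p$, the density $c_k\tilde h_k\psi$ of $\mu_k$ with respect to $\o^n$ is in $L^p(M,\o^n)$ for a fixed $p>1$, with $\int_M\mu_k=V$; by the Ko{\l}odziej--Yau theorem there is a continuous $u_k\in\PSH(M,\o)$ with $\o_{u_k}^n=\mu_k$, normalised by $V^{-1}\int_M u_k\,\o^n=V^{-1}\int_M u\,\o^n$. The density $c_k\tilde h_k$ of $\mu_k$ with respect to $\o_c^n$ is bounded above and below by positive constants and smooth on $M\sm D$, so the a priori Laplacian estimate for complex Monge--Amp\`ere equations with conic reference metric, together with Evans--Krylov and Schauder bootstrapping on $M\sm D$ (as in \cite{JMR} and its refinements), gives $\o_c/C_k\le\o_{u_k}\le C_k\o_c$ and $u_k\in C^\infty(M\sm D)$; that is, $u_k\in\H_\o^\beta$.

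It remains to show $d_1(u_k,u)\to0$. The normalisation together with \eqref{GreenEq} forces $|\sup_M u_k|\le C$, while the entropy convergence just shown bounds $\int_M\log(\o_{u_k}^n/e^{f_{\o,\beta}}\o^n)\,\o_{u_k}^n$. Applying Theorem \ref{EntropyCompactnessThm} with the probability measure $V^{-1}e^{f_{\o,\beta}}\o^n$ (its density is $L^p$ by Lemma \ref{RicciPotentialLemma}, and $\int_M e^{f_{\o,\beta}}\o^n=V$ by \eqref{foEq}) yields a subsequence $u_{k_j}$ that $d_1$-converges to some $u_\infty\in\mathcal E_1$. Since $d_1$-convergence entails weak convergence of Monge--Amp\`ere measures \cite[Theorem 5]{da4} and $\mu_{k_j}\to\o_u^n$ weakly, $\o_{u_\infty}^n=\o_u^n$; by uniqueness for the Monge--Amp\`ere equation in $\mathcal E_1$ \cite{bbegz}, $u_\infty=u+\mathrm{const}$, and passing to the limit in $V^{-1}\int_M u_{k_j}\o^n=V^{-1}\int_M u\,\o^n$ shows the constant vanishes, so $u_\infty=u$. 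As this limit is independent of the subsequence, the entire sequence $u_k$ $d_1$-converges to $u$, and $\o_{u_k}^n=\mu_k$ gives $\Ent(e^{f_{\o,\beta}}\o^n,\o_{u_k}^n)\to\Ent(e^{f_{\o,\beta}}\o^n,\o_u^n)$, as required.

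The main obstacle is the entropy convergence $\Ent(e^{f_{\o,\beta}}\o^n,\mu_k)\to\Ent(e^{f_{\o,\beta}}\o^n,\o_u^n)$ in the first step: it is the only place the finite-entropy hypothesis is genuinely used, and it demands careful tracking of the competing logarithmic singularities of $\log(\o_c^n/\o^n)$ and of $f_{\o,\beta}$ along $D$ in order to dominate the truncated, mollified densities; a secondary point is making the conic Monge--Amp\`ere a priori estimates sharp enough to conclude $u_k\in\H_\o^\beta$. Sharper statements along these lines are obtained in \cite{bdl}.
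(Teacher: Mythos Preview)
Your overall architecture is the same as the paper's: treat the infinite-entropy case by lower semicontinuity, and in the finite-entropy case prescribe the Monge--Amp\`ere measures of the approximants, show the entropies converge by truncation/mollification, then feed the resulting sequence into the entropy-compactness theorem and identify the limit via weak convergence of measures plus uniqueness in $\mathcal E_1$. Where you diverge is in the choice of reference measure and in how you land in $\mathcal H_\o^\beta$. The paper works with the \emph{smooth} density $g=\o_u^n/\o^n$, truncates $h_k=\min\{g,k\}$, approximates by smooth $g_k>0$ on all of $M$, and then solves the \emph{smooth} Calabi--Yau equation $\o_{v_k}^n=g_k\o^n/\!\int g_k\o^n$ to obtain $v_k\in\mathcal H_\o$. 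Only at the very end does it pass to $\mathcal H_\o^\beta$ by the elementary perturbation $u_k=v_k+\varepsilon_k(|s|_h^2)^\beta$ with $\varepsilon_k$ small, adjusting $\varepsilon_k$ so that both $d_1(u_k,u)\to0$ and the entropy convergence are preserved. This completely avoids any conic regularity theory: only the classical Yau theorem is invoked.

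Your route, by contrast, builds the approximants directly as edge metrics by solving $\o_{u_k}^n=c_k\tilde h_k\,\o_c^n$ and then appeals to conic $C^2$ and higher estimates to conclude $u_k\in\mathcal H_\o^\beta$. This is where you should be more careful. The two-sided Laplacian bound $\o_c/C_k\le\o_{u_k}\le C_k\o_c$ via a Chern--Lu or Yau-type argument with reference $\o_c$ needs quantitative control on $\tilde h_k$ near $D$ (e.g.\ membership in an edge H\"older class, or at least a bound on $\Delta_{\o_c}\log\tilde h_k$), not merely ``smooth on $M\setminus D$ and bounded in $[1/(2k),2k]$''. As stated, your mollification of $h_k$ gives no such control at $D$. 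This is fixable---for instance by arranging $\tilde h_k$ to be constant in a neighbourhood of $D$, or to lie in the polyhomogeneous/edge spaces used in \cite{JMR}---but it is exactly the extra work the paper's smooth-then-perturb trick was designed to bypass. Your own closing remark flags this as a ``secondary point''; in fact it is the one place your argument is genuinely incomplete without further specification.
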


\begin{proof} If $\Ent(e^{f_{\o,\beta}}\o^n,\o_{u}^n)=\infty$, than any sequence $u_k \in \mathcal H^\beta$ with $d_1(u_k,u) \to 0$ satisfies the requirements, as follows from the $d_1$-lower semi-continuity of the entropy. We can suppose that $Ent(e^{f_{\o,\beta}}\o^n,\o_{u}^n)$ is finite.

Let $g = \o_{u}^n/\o^n \geq 0$ be the density function of $\o_u^n$. We argue that there exists positive functions $g_k \in C^\infty(X)$ such that $|g-g_k|_{L^1} \to 0$ and 
$$\int_M g_k \log\frac{g_k}{e^{f_{\o,\beta}}}\o^n  \to \int_M g \log\frac{g}{e^{f_{\o,\beta}}}\o^n=\Ent(e^{f_{\o,\beta}}\o^n,\o_u^n).$$
First introduce $h_k = \min \{k,g\}, \ k \in \NN$. As $\phi(t) =t\log(t), \ t \geq 0$ is bounded from below by $-e^{-1}$ and increasing for $t\geq 1$, it follows that 
$$
-e^{-1} e^{f_{\o,\beta}} \leq h_k \log\frac{h_k}{e^{f_{\o,\beta}}} 
\leq \max \{ 0, g \log\frac{g}{e^{f_{\o,\beta}}}\}. $$ 
Clearly  $|h_k -g|_{L^1} \to 0$, and the dominated  convergence theorem gives that
\beq\label{Entconv1}
\int_M h_k \log\frac{h_k}{e^{f_{\o,\beta}}}\o^n  \to \int_M g \log\frac{g}{e^{f_{\o,\beta}}}\o^n=\Ent(e^{f_{\o,\beta}}\o^n,\o_u^n).
\eeq
Using the density of $C^\infty(M)$ in $L^1(M)$, by another application of the  dominated convergence theorem, we can find a positive sequence $g_k \in C^\infty(X)$  such that $|g_k-h_k|_{L^1} \leq 1/k$ and 
\beq \label{Entconv2}
\Big|\int_M h_k \log\frac{h_k}{e^{f_{\o,\beta}}}\o^n  - \int_M g_k \log\frac{g_k}{e^{f_{\o,\beta}}}\o^n\Big| \leq \frac{1}{k}.
\eeq
Using the Calabi-Yau theorem we find potentials $v_k \in \mathcal H_\o$ with $\sup_M v_k=0$ and $\o_{v_k}^n = g_k \o^n/\int_M g_k \o^n$. Proposition \ref{Ebetacompact} now guarantees that after possibly passing to a subsequence $d_1(v_k,h) \to 0$ for some $h \in \mathcal E_1(X)$. But \cite[Theorem 5 (i)]{da4} implies the equality 
of measures $\o_h^n = \o^n_u$.
 Finally the uniqueness theorem 
\cite[Theorem B]{gz} gives that in fact $h$ and $u$ 
can differ by at most a constant. Hence, after possibly adding a constant, 
we can suppose that $d_1(v_k,u) \to 0$.

The last step is to perturb $v_k$ slightly to obtain the conical metrics $u_k = v_k + \varepsilon_k |s|_h^{2\beta} \in \mathcal H^\beta$, where $s$ is the section of $L_D$, with vanishing locus $D$. The argument of Lemma \ref{HbetaDense} gives that for small enough $\varepsilon_k>0$ one has
$$d_1(u_k,u) \to 0.$$
Using \eqref{Entconv1} and \eqref{Entconv2} a  basic calculation gives that after possibly shrinking $\varepsilon_k>0$ further we obtain
$$
\Ent(e^{f{\o_\beta}}\o^n,\o^n_{u_k}) \to \Ent(e^{f{\o_\beta}}\o^n,\o^n_{u}),
$$
as desired.
\end{proof}

Since the last two terms in the formula for $E^\be$ are $d_1$-continuous,
as already noted, the previous lemma implies that our extension
is the largest $d_1$-lsc extension of its restriction to $\H_\o^\be$.
This concludes the proof of Proposition \ref{EbetaExt}.
\epf

\def\V{V^{-1}}
\def\fovpt{f_{\ovpt}}

\subsubsection{\KR solitons}

Suppose that $(M,\JJJ)$ is Fano and let $X\in\aut(M,\JJJ)$. 
Assume that $X$ satisfies \eqref{AssumpXEq}.
Recall from the proof of Lemma (i) that this implies
$X$ is a gradient vector field with respect to metrics in $\H^T$.
We say that  $\eta \in \mathcal H^T$ is a \KRS associated to $X$ if 
\begin{equation}
\label{KRsolitonEq}
\Ric\,\eta =\eta + \calL_X\eta.
\end{equation}
Following Tian-Zhu, we define the modified Ding functional and K-energy functional on $\H^T_\o$ as follows:
\begin{equation}
\label{ModifFEq}
F^X(\vp):=-\h{\rm AM}_X(\vp)-\log \V\int_M e^{f_\o - \vp}\o^n,
\end{equation}
\beq
\label{ModifEEq}
E^{X}(\vp):= F^X(\vp) + 
\V\int_M (f_\o - \psi^X_\o)e^{\psi^X_\o}\o^n 
- 
\V\int_M (f_{\o_\vp} - \psi^X_{\o_\vp})e^{\psi^X_{\o_\vp}}\o_\vp^n.
\eeq
As in \eqref{EbetaFbetaIneq}, it follows that
\beq
\label{EmodifFmodifIneq}
E^{X}(\vp) \geq  F^X(\vp) -C.
\eeq
The critical points of these functionals are K\"ahler-Ricci solitons. Both $F^X$ and $E^X$ are defined for smooth potentials, but as it turns out their definition can be extended to $\mathcal E^X_1$. 
In this article we will only need this for $F^X$. 

\bprop 
\label{FModifExt} Formula \eqref{ModifFEq} gives a unique $d_1$-continuous 
extension of $F^X:\mathcal H^X_\o \to \Bbb R$ to a functional on $\E^X_1$.
\eprop

\begin{proof} Lemma \ref{AMXLemma} gives that the  map $t \to \h{\rm AM}_X(u)$ is $d_1$-continuous. Lemma \ref{ExtLemma} with 
$a=f_\o$ give that the map $u \to  \log V^{-1 }\int_M e^{f_\o - u}\o^n$  
is also $d_1$-continuous. Uniqueness of the extension now 
follows from the density of $\mathcal H^X_\o$ in $\mathcal E_1^X$.
\end{proof}

\subsubsection{Constant scalar curvature metrics}

On a general K\"ahler manifold $(M,\o)$ 
Mabuchi's K-energy equals 
\begin{equation}
\label{Kendef}
E(\vp):= 
\Ent(\o^n,\o^n_\vp)
+  
n\h{\rm AM}(\vp) 
- 
\frac{1}{V}\sum_{j=0}^{n-1}\int_M \vp \Ric \o \wedge \ovp^{j} \wedge \o^{n-1-j}.
\eeq
Recall that by \eqref{last2summandsEq} 
this expression is equal to the one in \eqref{EbetaEq} when $\beta=1$. The advantage of this definition over the one in \eqref{EbetaEq} is that this makes sense independently of the condition \eqref{ConicCohomCond}.

The critical points of $E$ over $\H_0$ are metrics of constant scalar curvature. We have the following result, 
first obtained in \cite{bdl}, whose proof follows exactly the same line as that of Proposition \ref{EbetaExt}, the only difference being that in Lemma \ref{ExtLemma2} we now take $\alpha = \Ric \o$.
\bprop
\lb{EcscExtProp}
Formula \eqref{Kendef} 
gives the greatest $d_1$-lsc extension of 
$E:\mathcal H_\o \to \Bbb R \cup \{ \infty \}$ to a functional on $\E_1$.
\eprop

\subsection{Property \ref{p2}: compactness of minimizing sequences}

We turn to the compactness condition \ref{p2}. Most of the results that  we present here have already been obtained in the works \cite{bbgz,bbegz,bwn} in a different context. To fit well with the metric-geometric viewpoint on the space of K\"ahler metrics we employ here, we see it adequate to present a very detailed account of all theorems, often following the ideas of \cite{bbgz,bbegz,bwn}.

\subsubsection{\KE (edge) metrics}

\begin{proposition} 
\label{Fbetacompact} 
Suppose $(M,\JJJ,D, \o)$ satisfies \eqref{ConicCohomCond} with $\mu=1$.
Let $\calR=\H^\be_\o$, so $\overline{(\calR,d_1)}=\E_1$.
Then $F^\be$ given by Proposition \ref{FbetaExt} satifies property \ref{p2}.
\end{proposition}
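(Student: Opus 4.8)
The plan is to verify property \ref{p2} for $F^\be$ by exploiting the Ding--Tian relation \eqref{DingTianEq} and the inequality \eqref{EbetaFbetaIneq} to convert a $d_1$-bounded, $F^\be$-minimizing sequence into one with bounded entropy, and then invoke the entropy compactness result Theorem \ref{EntropyCompactnessThm}. So suppose $\{u_j\}\subset\E_1$ satisfies $d_1(0,u_j)\le C$ and $F^\be(u_j)\to\inf_{\E_1}F^\be$. By Proposition \ref{Jproperness}, $d_1$-boundedness gives $|\sup_M u_j|\le C'$ and $|\h{\rm AM}(u_j)|\le C'$ (the latter also directly from Lemma \ref{L1Lemma}(i) and \eqref{GreenEq}). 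The first step is to normalize: after subtracting constants we may assume $\sup_M u_j=0$, which does not affect $F^\be$ since $F^\be(\vp)=F^\be(\vp+c)$ (both $\h{\rm AM}$ and the log term shift by $c$); but one must be slightly careful because the normalization we really want is $\h{\rm AM}(u_j)=0$, i.e.\ $u_j\in\E_1\cap\h{\rm AM}^{-1}(0)$, so that Proposition \ref{Jproperness} applies cleanly — in any case both $|\sup_M u_j|$ and $|\h{\rm AM}(u_j)|$ stay bounded under either normalization.

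Next I would extract a minimizer candidate. Since $\inf_{\E_1}F^\be$ is finite (it is bounded below along the bounded sequence, and one checks $F^\be>-\infty$ in general, e.g.\ via \eqref{EbetaFbetaIneq} together with $E^\be$ bounded below on the $d_1$-ball, which follows from Proposition \ref{EbetaExt} and the fact that the continuous pieces are $d_1$-bounded on $d_1$-bounded sets while entropy is $\ge -e^{-1}$), the relation \eqref{DingTianEq}, \eqref{EbetaFbetaIneq} gives
$$
\Ent(e^{f_{\o,\beta}}\o^n,\o^n_{u_j})\le E^\be(u_j)-n\h{\rm AM}(u_j)+\V\sum_{j=0}^{n-1}\int_M u_j\w\o_{u_j}^j\w\o^{n-1-j}+\text{(const)},
$$
wait — more cleanly: $E^\be(u_j)\ge F^\be(u_j)$ and $F^\be(u_j)$ is bounded above (it converges), so we need an upper bound on $E^\be(u_j)$ to bound the entropy. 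For that, rewrite $\Ent(e^{f_{\o,\beta}}\o^n,\o^n_{u_j})=E^\be(u_j)-n\h{\rm AM}(u_j)+\V\sum_{j=0}^{n-1}\int_M u_j\,\o_{u_j}^j\w\o^{n-1-j}$ from \eqref{EbetaEq}, \eqref{last2summandsEq}; the $\h{\rm AM}$ term is bounded, and the sum is bounded on $d_1$-bounded sets by Lemma \ref{ExtLemma2} (with $\alpha=\o$). Thus it suffices to bound $E^\be(u_j)$ from above. Here I would use that along the $d_1$-geodesic from $0$ to $u_j$, or directly, $E^\be\ge F^\be$ is the wrong direction; instead note that $F^\be(u_j)$ bounded plus \eqref{DingTianEq} gives $E^\be(u_j)=F^\be(u_j)-\V\int f_{u_j,\beta}\o_{u_j}^n$, and $\V\int f_{u_j,\beta}\o_{u_j}^n\ge$ some bound — actually this term can be very negative, so this is the delicate point.

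The cleanest route, and the one I would ultimately follow, avoids $E^\be$ entirely: apply Theorem \ref{EntropyCompactnessThm} with $\mu=e^{f_{\o,\beta}}\o^n$, which is a probability measure with density in $L^p$, $p>1$, by Lemma \ref{RicciPotentialLemma}. So I need $\int_M\log\frac{\o_{u_j}^n}{\mu}\o_{u_j}^n<C$, i.e.\ $\Ent(\mu,\o_{u_j}^n)<C$. To get this I use that $F^\be(u_j)$ is bounded and, crucially, that $-F^\be$ dominates the entropy up to $d_1$-bounded error: from \eqref{FbetaEq}, $-F^\be(u_j)=\h{\rm AM}(u_j)+\log\V\int_M e^{f_{\o,\beta}-u_j}\o^n$; by Jensen applied to the probability measure $\o_{u_j}^n/V$,
$$
\log\V\int_M e^{f_{\o,\beta}-u_j}\o^n=\log\V\int_M e^{f_{\o,\beta}-u_j-\log(\o_{u_j}^n/\o^n)}\o_{u_j}^n\ge \V\int_M\Big(f_{\o,\beta}-u_j-\log\tfrac{\o_{u_j}^n}{\o^n}\Big)\o_{u_j}^n,
$$
so $\Ent(e^{f_{\o,\beta}}\o^n,\o_{u_j}^n)=\V\int\log\frac{\o_{u_j}^n}{e^{f_{\o,\beta}}\o^n}\o_{u_j}^n\le -F^\be(u_j)+\h{\rm AM}(u_j)-\V\int u_j\,\o_{u_j}^n+(\text{const from }\int f_{\o,\beta}\o_{u_j}^n\text{, wait})$. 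Here $\int f_{\o,\beta}\o_{u_j}^n$ needs a bound — but $\int e^{f_{\o,\beta}}\o_{u_j}^n$ is not obviously controlled; instead one uses $f_{\o,\beta}\le C$ pointwise (it is bounded above: $f_{\o,\beta}$ is smooth away from $D$ and tends to $-\infty$ along $D$ when $\beta<1$, but is bounded above), giving $\int f_{\o,\beta}\o_{u_j}^n\le C$. And $-\V\int u_j\,\o_{u_j}^n=\int|u_j|\o_{u_j}^n\le C d_1(0,u_j)\le C$ by \eqref{d1CharFormula} after the $\sup_M u_j=0$ normalization. Hence $\Ent(e^{f_{\o,\beta}}\o^n,\o_{u_j}^n)\le -F^\be(u_j)+C\le C'$ since $F^\be(u_j)$ is bounded below (it converges). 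Combined with $|\sup_M u_j|\le C$, Theorem \ref{EntropyCompactnessThm} yields a $d_1$-convergent subsequence $u_{j_k}\to u\in\E_1$. Finally, since $F^\be$ is $d_1$-continuous by Proposition \ref{FbetaExt}, $F^\be(u)=\lim F^\be(u_{j_k})=\inf_{\E_1}F^\be$, so $u\in\mathcal M$, which is exactly \ref{p2}. The main obstacle, as indicated, is the careful bookkeeping of the normalization and the use of Jensen's inequality to extract an entropy bound from the Ding functional bound — in particular making sure every auxiliary term ($\int f_{\o,\beta}\o_{u_j}^n$, $\int u_j\,\o_{u_j}^n$, $\h{\rm AM}(u_j)$) is controlled by $d_1(0,u_j)\le C$ and the boundedness above of $f_{\o,\beta}$; everything else is then a direct appeal to Theorem \ref{EntropyCompactnessThm} and Proposition \ref{FbetaExt}.
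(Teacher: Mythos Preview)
Your Jensen step goes the wrong way. From
\[
\log\V\int_M e^{f_{\o,\beta}-u_j-\log g_j}\,\o_{u_j}^n \;\ge\; \V\int_M\big(f_{\o,\beta}-u_j-\log g_j\big)\,\o_{u_j}^n
\]
(with $g_j=\o_{u_j}^n/\o^n$, assuming it exists) one obtains, after unwinding the definitions,
\[
\Ent(e^{f_{\o,\beta}}\o^n,\o_{u_j}^n)\;\ge\;F^\beta(u_j)+\h{\rm AM}(u_j)-\V\int_M u_j\,\o_{u_j}^n,
\]
which is nothing but a rewriting of the inequality $E^\beta\ge F^\beta$ from \eqref{EbetaFbetaIneq}. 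So Jensen yields a \emph{lower} bound on the entropy, not the upper bound you claim. A bound on the Ding functional along a $d_1$-bounded sequence simply does not control $\Ent(e^{f_{\o,\beta}}\o^n,\o_{u_j}^n)$, and hence Theorem \ref{EntropyCompactnessThm} cannot be invoked here. (There is also the separate issue that for general $u_j\in\E_1$ the measure $\o_{u_j}^n$ need not be absolutely continuous with respect to $\o^n$, so the rewriting with $\log g_j$ is not even available.)

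The paper's argument avoids entropy altogether. One first uses $|\sup_M u_j|\le C$ to extract an $L^1(\o^n)$-convergent subsequence $u_{j_k}\to u\in\PSH(M,\o)$; upper semicontinuity of $\h{\rm AM}$ in the $L^1$-topology together with $|\h{\rm AM}(u_{j_k})|\le C$ forces $u\in\E_1$ via Lemma \ref{AMfinite}. Next, Zeriahi's uniform integrability (Theorem \ref{ZeriahiCompactnessThm}) plus H\"older's inequality (exactly as in Lemma \ref{ExtLemma}) give $\int_M e^{f_{\o,\beta}-u_{j_k}}\o^n\to\int_M e^{f_{\o,\beta}-u}\o^n$, and combined with $\limsup\h{\rm AM}(u_{j_k})\le\h{\rm AM}(u)$ one gets $F^\beta(u)\le\lim F^\beta(u_{j_k})$, so $u\in\calM$. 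Since the exponential term actually converges and $F^\beta$ attains its infimum at $u$, one is forced into $\limsup\h{\rm AM}(u_{j_k})=\h{\rm AM}(u)$; passing to a further subsequence with $\h{\rm AM}(u_{j_k})\to\h{\rm AM}(u)$, Lemma \ref{L1Lemma}(i) gives $d_1(u_{j_k},u)\to 0$.
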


\begin{proof} 
{\it Step 1.} In this step we construct a candidate minimizer $u\in \E_1$.
Suppose that $\{u_j\}_{j \in \Bbb N} \subset \mathcal \E^X_1$ 
satisfies 
$$
\lim_jF^\beta(u_j)= \inf_{\mathcal \E^X_1} F^\beta, \qq d_1(0,u_j) \leq C.
$$ 
Thus, as in the proof of Lemma \ref{ExtLemma}, the estimates
\eqref{supAMEq} hold, in particular 
$|\sup_M u_j| \leq C$, so there exists 
$j_k \to \infty$ and $u \in \textup{PSH}(M,\o)$ such that 
$\int_M |u_{j_k} -u|\o^n \to 0$ \cite[Proposiion I.4.21]{dembook}.  

Now, Lemma \ref{AMXLemma} (ii) (with $X=0$) gives that $\h{\rm AM}$ is 
upper semicontinuous with respect to the weak $L^1(M,\on)$ topology.
This together with $|\h{\rm AM}(u_{j_k})| \leq C$ yields,
\beq
\lb{AMuscEq}
-C \leq \limsup_{k} \h{\rm AM}(u_{j_k})\leq \h{\rm AM}(u) \leq \limsup_{k} \sup_M {u_{j_k}} \leq C,
\eeq
hence $u \in \mathcal E_1$ by Lemma \ref{AMfinite}. 
\smallskip

\noindent
{\it Step 2.} In this step we show $u$ is actually a minimizer.
Following the ideas giving estimate \eqref{expIntegralEst} we arrive at
\begin{flalign}
\Big|
\int_M  
\Big(e^{f_{\o,\beta}  - u_{k}} 
- &e^{f_{\o,\beta} - u}\Big)\o^n
\Big| 
\leq 
\int_M |u_{k}-u|(e^{f_{\o,\beta}- u_{k}}+e^{f_{\o,\beta}- u})\o^n 
\cr
&
\leq 
||u_{k}-u||_{L^q(M,\on)}
||e^{f_{\o,\beta}}||_{L^p(M,\on)}
(||e^{-u_{k}}||_{L^s(M,\on)}+||e^{-u_{k}}||_{L^s(M,\on)}),
\nonumber
\end{flalign}
where $1/q + 1/p + 1/s=1$. The first term of this last expression converges to zero as all $L^q$ topologies are equivalent on $\PSH(M,\o)$. The second term is bounded by Lemma \ref{RicciPotentialLemma}, while the third is bounded by \eqref{Zeriahiest1}. All this gives
\begin{equation}
\label{liminfAM}
\lim_{j_k}F^\beta(u_{j_k}) 
\geq 
- 
\limsup_{j_k}\h{\rm AM}(u_{j_k}) 
-\log\Big( \V\int_M e^{f_{\o,\be}-u}\o^n\Big) 
\geq F^\beta(u).
\end{equation}
As $j_k \to F^\beta(u_{j_k})$ minimizes $F^\beta$, 
it follows that the last inequality must be an equality.
Thus,  $u$ minimizes $F^\beta$.

\smallskip
\noindent{\it Step 3.} Here, we show that there is a subsequence $d_1$-converging to $u$.
Since equality holds in \eqref{liminfAM},
$\limsup_k \h{\rm AM}(u_{j_k})=\h{\rm AM}(u)$. 
Thus, after possibly passing to a further subsequence, 
$\lim_m \h{\rm AM}(u_{j_{k_m}})=\h{\rm AM}(u)$. 
This together with $||u_{j_{k_m}}-u||_{L^1(M,\on)} \to 0$ 
and Lemma \ref{L1Lemma} (i) gives that $d_1({j_{k_m}},u)\to 0$.
\end{proof}

\begin{proposition} 
\label{Ebetacompact}
Suppose $(M,\JJJ,D, \o)$ satisfies \eqref{ConicCohomCond} with $\mu=1$.
Let $\calR=\H^\be_\o$, so $\overline{(\calR,d_1)}=\E_1$.
Then $E^\be$ given by Proposition \ref{EbetaExt} satifies property \ref{p2}. 
\end{proposition}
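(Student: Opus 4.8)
The plan is to follow the three–step scheme of the proof of Proposition~\ref{Fbetacompact}, replacing the uniform exponential–integrability argument there by the entropy compactness result of Theorem~\ref{EntropyCompactnessThm}. So let $\{u_j\}_{j\in\NN}\subset\E_1$ satisfy $\lim_j E^\be(u_j)=\inf_{\E_1}E^\be$ and $d_1(0,u_j)\le C$. Exactly as in the proofs of Lemma~\ref{ExtLemma} and Proposition~\ref{Fbetacompact} (using Lemma~\ref{L1Lemma}~(i), the Green estimate \eqref{GreenEq} and Proposition~\ref{Jproperness}), the $d_1$-bound yields a uniform constant with $|\sup_M u_j|\le C$ and $|\h{\rm AM}(u_j)|\le C$.

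Next I would isolate the entropy term in \eqref{EbetaEq}. By \eqref{last2summandsEq}, the sum of the last two terms of $E^\be$ equals $n\h{\rm AM}(\vp)-\V\sum_{j=0}^{n-1}\int_M\vp\w\ovp^{j}\w\o^{n-j}$, which by Lemma~\ref{L1Lemma}~(i) and by Lemma~\ref{ExtLemma2} (with $\a=\o$) is $d_1$-continuous and, crucially, bounded on $d_1$-bounded subsets of $\E_1$. Hence $|E^\be(u_j)-\Ent(e^{f_{\o,\be}}\o^n,\o_{u_j}^n)|\le C$ uniformly in $j$. Since $\inf_{\E_1}E^\be\le E^\be(0)<\infty$, we have $E^\be(u_j)\le E^\be(0)+1$ for $j$ large, and therefore the entropies $\Ent(e^{f_{\o,\be}}\o^n,\o_{u_j}^n)$ are uniformly bounded from above.

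Now I would apply Theorem~\ref{EntropyCompactnessThm} with $\mu:=\V e^{f_{\o,\be}}\o^n$, which is a probability measure by the normalization in \eqref{foEq} and satisfies $e^{f_{\o,\be}}\in L^p(M,\o^n)$ for some $p>1$ by Lemma~\ref{RicciPotentialLemma}; note that the entropy bound of the previous step is, up to an additive constant, exactly the bound $\int_M\log\frac{\o_{u_j}^n}{\mu}\,\o_{u_j}^n\le C$ required there, while $|\sup_M u_j|\le C$ comes from the first step. Thus some subsequence $\{u_{j_k}\}$ $d_1$-converges to a limit $u\in\E_1$. Finally, since by Proposition~\ref{EbetaExt} the functional $E^\be$ is the greatest $d_1$-lsc extension to $\E_1$, we get $E^\be(u)\le\liminf_k E^\be(u_{j_k})=\inf_{\E_1}E^\be$; as $u\in\E_1=\overline{\calR}$ this forces equality, so $u\in\mathcal M$ and $u_{j_k}\to u$ in $d_1$. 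This verifies property~\ref{p2}.

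I do not expect a serious obstacle: essentially all the content is already packaged in Theorem~\ref{EntropyCompactnessThm} and in the $d_1$-continuity and boundedness-on-$d_1$-balls of the lower-order terms supplied by Lemma~\ref{ExtLemma2}. The one point that must not be skipped is the entropy bound: because the minimizing sequence is only $d_1$-bounded and not a priori relatively compact, one needs the non-entropy part of $E^\be$ to remain bounded along it in order to extract an entropy bound, and it is precisely the boundedness of $\tilde E$ on $d_1$-balls in Lemma~\ref{ExtLemma2} that provides this; without it the entropy could a priori be unbounded and Theorem~\ref{EntropyCompactnessThm} would not apply.
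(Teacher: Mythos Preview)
Your proposal is correct and follows essentially the same approach as the paper: bound the non-entropy summands of $E^\be$ on $d_1$-balls via \eqref{last2summandsEq}, Lemma~\ref{ExtLemma2} and the $d_1$-continuity of $\h{\rm AM}$, extract a uniform entropy bound together with $|\sup_M u_j|\le C$, apply Theorem~\ref{EntropyCompactnessThm}, and conclude by the $d_1$-lower semicontinuity of $E^\be$ from Proposition~\ref{EbetaExt}. Your write-up is in fact somewhat more explicit than the paper's (e.g., invoking Lemma~\ref{RicciPotentialLemma} for the $L^p$ hypothesis on $\mu$), but the argument is the same.
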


\begin{proof}  
According to Lemma \ref{ExtLemma2}  and Lemma \ref{AMXLemma} (i) (with $X=0$), 
the second and third summands in \eqref{EbetaEq} are 
controlled by the metric $d_1$. 
Thus, as $E^\beta(u_j)$ and $d(0,u_j)$ are 
uniformly bounded, this implies that both 
$\Ent(f_{\o,\beta}\o^n,\o_{u_j}^n)$ and $|\sup_M u_j|$ remain uniformly bounded. 
Theorem \ref{EntropyCompactnessThm} thus gives a subsequence 
$u_{j_k}$ and $u \in \mathcal E_1$ such that $d_1(u_{j_k},u) \to 0$. 
The $d_1$-lower semicontinuity of $E^\beta$ 
(Proposition \ref{EbetaExt})
then implies that $u$ minimizes $E^\beta$ and 
$\lim_k E^\beta(u_{j_k}) = E^\beta(u)$.
\end{proof}

\subsubsection{\KR solitons}

\begin{proposition} 
\label{FModifCompact} 
Suppose $(M,\JJJ,\o)$ is Fano, $X\in\autMJ$ satisfies \eqref{AssumpXEq}.
Let $\calR=\H^X_\o$, so $\overline{(\calR,d_1)}=\E^X_1$.
Then $F^X$ given by Proposition \ref{FModifExt} satifies property \ref{p2}.
\end{proposition}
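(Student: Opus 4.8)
The plan is to mirror, step for step, the proof of Proposition \ref{Fbetacompact}, using the weighted energy $\h{\rm AM}_X$ in place of $\h{\rm AM}$, and to add one comparison step at the end that converts $\h{\rm AM}_X$-convergence back into $\h{\rm AM}$-convergence (hence $d_1$-convergence). Throughout one uses that $(M,\JJJ)$ Fano forces $b_1(M)=0$, so that Lemma \ref{AMXLemma} applies: $\h{\rm AM}_X$ has a $d_1$-continuous extension to $\E_1^X$, is upper semicontinuous for the $L^1(M,\o^n)$ topology, and (Remark \ref{AMXRemark}(ii), via Lemma \ref{AMXDefLemma}) is monotone; and that $\o$ smooth makes $f_\o$ smooth, so $e^{f_\o}\in L^p(M,\o^n)$ for all $p>1$ (Lemma \ref{RicciPotentialLemma}). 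For Step~1, let $\{u_j\}\subset\E_1^X$ satisfy $F^X(u_j)\to\inf_{\E_1^X}F^X$ with $d_1(0,u_j)\le C$. Exactly as in the proof of Lemma \ref{ExtLemma}, the bound on $d_1(0,u_j)$ gives the estimates \eqref{supAMEq}, i.e. $|\sup_M u_j|,|\h{\rm AM}(u_j)|\le C'$; passing to a subsequence, $u_{j_k}\to u$ in $L^1(M,\o^n)$ for some $u\in\PSH(M,\o)$. Since every $u_{j_k}$ is invariant under the torus $T=T(X)$ of \eqref{AssumpXEq}, which acts by $\o^n$-preserving isometries, $u$ is $T$-invariant; and the $L^1$-upper semicontinuity of $\h{\rm AM}$ (Lemma \ref{AMXLemma}(ii) with $X=0$) together with $|\h{\rm AM}(u_{j_k})|\le C'$ gives $\h{\rm AM}(u)>-\infty$, so $u\in\E_1$ by Lemma \ref{AMfinite}, hence $u\in\E_1^X$.

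For Step~2, I would write $F^X(u_{j_k})=-\h{\rm AM}_X(u_{j_k})-\log\V\int_M e^{f_\o-u_{j_k}}\o^n$ and treat the two terms separately. The estimates \eqref{supAMEq} and Theorem \ref{ZeriahiCompactnessThm} give $\int_M e^{-su_{j_k}}\o^n\le C''(s)$ for every $s\ge1$; combining this with $e^{f_\o}\in L^p$, the elementary bound \eqref{convExpEq}, and H\"older's inequality (verbatim as in Step~2 of the proof of Proposition \ref{Fbetacompact}) yields $\int_M e^{f_\o-u_{j_k}}\o^n\to\int_M e^{f_\o-u}\o^n$, so the log-term converges. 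Hence $\lim_k\h{\rm AM}_X(u_{j_k})$ exists, and the $L^1$-upper semicontinuity of $\h{\rm AM}_X$ gives
$$\inf_{\E_1^X}F^X=\lim_kF^X(u_{j_k})=-\lim_k\h{\rm AM}_X(u_{j_k})-\log\V\!\int_M e^{f_\o-u}\o^n\ \ge\ F^X(u)\ \ge\ \inf_{\E_1^X}F^X.$$
Thus equality holds throughout: $u\in\mathcal M$, and moreover $\h{\rm AM}_X(u_{j_k})\to\h{\rm AM}_X(u)$.

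For Step~3 (the only genuinely new point), pass to a further subsequence, still written $u_{j_k}$, with $u_{j_k}\to u$ a.e. Put $\phi_k:=(\sup_{l\ge k}u_{j_l})^*$, which is $T$-invariant, satisfies $\phi_k\ge u_{j_k}$ (so $\phi_k\in\E_1^X$ by \eqref{supAMEq}, \eqref{AMMonEq} and Lemma \ref{AMfinite}), and decreases a.e. to $u$; hence $d_1(\phi_k,u)\to0$ by Lemma \ref{L1Lemma}(ii), so $\h{\rm AM}(\phi_k)\to\h{\rm AM}(u)$ and $\h{\rm AM}_X(\phi_k)\to\h{\rm AM}_X(u)$. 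Comparing increments along the segment $t\mapsto(1-t)u_{j_k}+t\phi_k$ — using $\frac{d}{dt}\h{\rm AM}(\gamma_t)=\V\int_M\dot\gamma_t\o_{\gamma_t}^n$, the analogous identity for $\h{\rm AM}_X$ from Lemma \ref{AMXDefLemma}, and the uniform bound \eqref{psiXEq} on $\psi^X$ (first for smooth endpoints, then by a decreasing regularization) — gives
$$0\ \le\ \h{\rm AM}(\phi_k)-\h{\rm AM}(u_{j_k})\ \le\ e^{C}\big(\h{\rm AM}_X(\phi_k)-\h{\rm AM}_X(u_{j_k})\big)\ \to\ 0.$$
Therefore $\h{\rm AM}(u_{j_k})\to\h{\rm AM}(u)$, and with $u_{j_k}\to u$ in $L^1(M,\o^n)$, Lemma \ref{L1Lemma}(i) gives $d_1(u_{j_k},u)\to0$. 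This is precisely \ref{p2}.

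The main obstacle is Step~3. In Proposition \ref{Fbetacompact} the energy $\h{\rm AM}$ sits directly inside the Ding functional, so $L^1$- plus energy-convergence yields $d_1$-convergence at once via Lemma \ref{L1Lemma}(i); here the variational argument only produces $\h{\rm AM}_X$-convergence, which is not \emph{a priori} the same thing, and one must convert it. The conversion rests on the fact that $\h{\rm AM}_X$ and $\h{\rm AM}$ are uniformly comparable along ordered pairs, and this in turn is exactly the content of the uniform two-sided bound on the Hamiltonian $\psi^X$ provided — via Atiyah--Guillemin--Sternberg convexity — by \eqref{psiXEq}. Everything else is a routine transcription of the \KE edge case.
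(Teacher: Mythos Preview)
Your proof is correct and follows the paper's three-step structure almost verbatim; the only substantive deviation is in Step~3. The paper chooses as auxiliary sequence $v_{j_k}:=\max\{u,u_{j_k}\}$, checks that $v_{j_k}$ is again $F^X$-minimizing (via monotonicity of $\h{\rm AM}_X$ and convergence of the log term), and then re-runs Step~2 on $\{v_{j_k}\}$ to obtain $\h{\rm AM}_X(v_{j_k})\to\h{\rm AM}_X(u)$. You instead take $\phi_k:=(\sup_{l\ge k}u_{j_l})^*$, which \emph{decreases} to $u$ and so $d_1$-converges automatically by Lemma~\ref{L1Lemma}(ii); this gives $\h{\rm AM}_X(\phi_k)\to\h{\rm AM}_X(u)$ for free from the $d_1$-continuity of $\h{\rm AM}_X$, bypassing the need to show $\phi_k$ is minimizing. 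After that point both arguments are identical: one invokes the two-sided comparison of $\h{\rm AM}$- and $\h{\rm AM}_X$-increments along ordered pairs (your displayed inequality, the paper's \eqref{AMAMXeqv}), which rests exactly on the uniform bound \eqref{psiXEq}, to convert $\h{\rm AM}_X$-convergence into $\h{\rm AM}$-convergence and finish via Lemma~\ref{L1Lemma}(i). Your variant is marginally cleaner; the paper's choice $\max\{u,u_{j_k}\}$ has the small advantage of not requiring the negligible-set/Hartogs argument that $(\sup_{l\ge k}u_{j_l})^*\searrow u$.
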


\begin{proof} 
The proof is similar to that of Proposition \ref{Fbetacompact}, 
with a slight twist at the end.

{\it Step 1.} This is identical to Step 1 in the 
proof of Proposition \ref{Fbetacompact}, yielding a $u\in\E_1^X$
satisfying \eqref{AMuscEq}.

\smallskip
\noindent
{\it Step 2.}
This is again identical to Step 2 in the 
proof of Proposition \ref{Fbetacompact}, this time yielding
\begin{equation}
\label{liminfAMX}
\lim_{j_k}F^X(u_{j_k}) 
\geq - \limsup_{j_k}\h{\rm AM}_X(u_{j_k}) 
-\log\Big( \V\int_M e^{f-u}\o^n\Big) \geq F^X(u).
\end{equation}
As $j_k \to F^X(u_{j_k})$ minimizes $F^X$, it follows that the  inequalities are actually equalities, so  
$u$ minimizes $F^X$ and $\limsup_k \h{\rm AM}_X(u_{j_k})=\h{\rm AM}_X(u)$. Consequently, after possibly passing to a subsequence 
\beq\label{AMXlimitEq}
\lim_k \h{\rm AM}_X(u_{j_k})=\h{\rm AM}_X(u).
\eeq

\smallskip
\noindent
{\it Step 3.}
We argue now that $d_1(u_{j_k},u) \to 0$. 
By Lemma \ref{L1Lemma} we only need to show that 
$\h{\rm AM}(u_{j_k}) \to \h{\rm AM}(u)$. 
For this we introduce an auxiliary sequence  
$$
v_{j_k}:= 
\max\{u,u_{j_k}\} \in \mathcal E_1^X.
$$
Observe that
\beq\lb{SupvjkEq}
|\sup_M v_{j_k}|\le C,
\eeq
since this holds for $u_{j}$ by \eqref{supAMEq} (this is part of Step 1).
Similarly, since $v_{j_k}\ge u_{j_k}$, \eqref{AMMonEq} and \eqref{supAMEq} imply
that $\h{\rm AM}(v_{j_k})\ge C$. Since \eqref{SupvjkEq} and the definition
of AM give that $\h{\rm AM}(v_{j_k})\le C$, we have
$$
|\h{\rm AM}(v_{j_k})| \leq C.
$$
Furthermore, $||v_{j_k}-u||_{L^1(M,\on)} \to 0$. 
Thus, we may apply Step 2 to the sequence $\{v_{j_k}\}$ to obtain
\eqref{liminfAMX} for $v_{j_k}$.
By monotonicity of $\h{\rm AM}_X$ we have 
$-\h{\rm AM}_X(v_{j_k}) 
\leq -\h{\rm AM}_X(u_{j_k})$, 
hence  $v_{j_k}$ is also $F^X$-minimizing. 
In the same manner \eqref{AMXlimitEq} was derived one then has
\beq\label{vAMXlimitEq}
\lim_k \h{\rm AM}_X(v_{j_k})=\h{\rm AM}_X(u).
\eeq
Examining 
\eqref{AMXdiff} and recalling 
\eqref{psiXEq}, 
one can show the existence of $C>1$ 
such that  for any $w, v \in \mathcal E_1$ {\it with} $w \leq v$ one has
\begin{equation}\label{AMAMXeqv}
0\leq \frac{1}{C}(\h{\rm AM}(v) - \h{\rm AM}(w)) \leq \h{\rm AM}_X(v) - \h{\rm AM}_X(w) \leq C(\h{\rm AM}(v) - \h{\rm AM}(w)).
\end{equation}
Using that $u \leq v_{j_k}$, $u_{j_k} \leq v_{j_k}$ equations \eqref{AMXlimitEq}, \eqref{vAMXlimitEq} and \eqref{AMAMXeqv} we obtain that 
$$
\lim_k \h{\rm AM}(u_{j_k})=\lim_k \h{\rm AM}(v_{j_k})=\h{\rm AM}(u),
$$
as desired.
\end{proof}

\subsubsection{Constant scalar curvature metrics}

The proof of the following result is the same as that of Proposition \ref{Ebetacompact} and it already appears in \cite{dh}.

\begin{proposition} 
\label{EcscKcompact}
Let $\o$ be an arbitrary \K metric on $(M,\JJJ)$.
Let $\calR=\H_\o$, so $\overline{(\calR,d_1)}=\E_1$.
Then $E$ given by Proposition \ref{EcscExtProp} satifies property \ref{p2}. 
\end{proposition}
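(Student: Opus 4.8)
The plan is to imitate, almost word for word, the proof of Proposition \ref{Ebetacompact}. The key observation is that the K-energy \eqref{Kendef} has exactly the same structure as $E^\be$ in \eqref{EbetaEq}: by \eqref{last2summandsEq} together with Proposition \ref{EcscExtProp}, its last two summands equal $n\h{\rm AM}(\vp)$ minus a functional of the type treated in Lemma \ref{ExtLemma2}, now with $\alpha=\Ric\o$; hence they are $d_1$-continuous and, crucially, bounded on $d_1$-bounded subsets of $\E_1$. The remaining summand is the relative entropy $\Ent(\o^n,\o^n_\vp)$, which is nonnegative by Jensen's inequality and $d_1$-lsc by Proposition \ref{EcscExtProp}.

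Given this, the steps I would carry out are as follows. Let $\{u_j\}\subset\E_1$ satisfy $d_1(0,u_j)\le C$ and $E(u_j)\to\inf_{\E_1}E$. First, from $d_1$-boundedness I would extract the uniform bounds $|\sup_M u_j|\le C$ and $|\h{\rm AM}(u_j)|\le C$, using \eqref{d1CharFormula}, \eqref{GreenEq} and the $d_1$-Lipschitz continuity of $\h{\rm AM}$, exactly as in the derivation of the estimates \eqref{supAMEq}. Next, combining these with Lemma \ref{ExtLemma2} (applied with $\alpha=\Ric\o$) bounds the last two terms of \eqref{Kendef} uniformly along the sequence; since $\Ent(\o^n,\o^n_{u_j})\ge 0$, the convergent sequence $E(u_j)$ is then bounded below, hence bounded, so in particular $\inf_{\E_1}E>-\infty$, and subtracting the two bounded terms yields a uniform bound $\Ent(\o^n,\o^n_{u_j})\le C$. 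Then I would apply Theorem \ref{EntropyCompactnessThm} with the probability measure $\mu:=\V\o^n$, whose density is the constant $\V$ and thus lies in $L^p(M)$ for every $p>1$: the two hypotheses hold, since $|\sup_M u_j|\le C$ by the previous step and $\int_M\log(\o^n_{u_j}/\mu)\,\o^n_{u_j}=V\log V+V\,\Ent(\o^n,\o^n_{u_j})$ is bounded above. Hence some subsequence $u_{j_k}$ $d_1$-converges to a point $u\in\E_1$. Finally, the $d_1$-lower semicontinuity of $E$ (Proposition \ref{EcscExtProp}) gives $E(u)\le\liminf_k E(u_{j_k})=\inf_{\E_1}E$, so $u$ is a minimizer of $E$ over $\E_1$ and the subsequence converges to it; this is property \ref{p2}.

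The only place where the general constant-scalar-curvature case genuinely differs from the \KE edge treatment — and so the only step needing real care — is the uniform entropy bound: in \eqref{EbetaEq} the Ricci term is absorbed into $-\h{\rm AM}(\vp)+\V\int_M\vp\o_\vp^n$, whereas for a general \K class one must instead invoke Lemma \ref{ExtLemma2} with $\alpha=\Ric\o$, whose boundedness on $d_1$-bounded sets rests on the quantitative $L^1$--$d_1$ comparison estimates of \cite{da4} recalled in its proof. Once that input is available the argument is verbatim that of Propositions \ref{Fbetacompact} and \ref{Ebetacompact}, and agrees with the proof given in \cite{dh}.
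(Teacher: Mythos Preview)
Your proposal is correct and follows essentially the same approach as the paper, which simply states that the proof is identical to that of Proposition \ref{Ebetacompact} (and already appears in \cite{dh}). You have in fact been slightly more careful than the paper's sketch: you explicitly justify that $\inf_{\E_1}E>-\infty$ via the nonnegativity of the entropy and the $d_1$-boundedness of the remaining terms, and you spell out the application of Theorem \ref{EntropyCompactnessThm} with $\mu=\V\o^n$.
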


\subsection{Property \ref{p3}: regularity of minimizers}
\label{P3SubSec}

The regularity condition \ref{p3} for $F^\be$ and $E^\be$ follows after combining together the result of Berman \cite[Theorem 1.1]{brm1} 
and the regularity theorems 
\cite[Corollary 6.9]{ko},\cite{JMR},\cite{gp}
(see \cite{R14} for more details).

\begin{theorem} 
\label{BRM1Thm}
Suppose $(M,\JJJ,D, \o)$ satisfies \eqref{ConicCohomCond} with $\mu=1$ 
and $\vp \in \E_1$. The following are equivalent:
\noindent (i) $\ovp$ is a \KE edge metric,
\noindent (ii) $\vp$ minimizes $F^\beta$,
\noindent (iii) $\vp$ minimizes $E^\beta$.
\end{theorem}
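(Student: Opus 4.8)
The plan is to prove the cycle of implications (i) $\Rightarrow$ (ii) $\Rightarrow$ (iii) $\Rightarrow$ (i), using the variational machinery already assembled in this section together with the existing regularity theory for complex \MA equations. First observe that by Proposition \ref{EbetaExt} and Proposition \ref{FbetaExt} both $E^\be$ and $F^\be$ are well-defined on $\E_1$ (the latter even $d_1$-continuous), and by \eqref{EbetaFbetaIneq} we have $E^\be \ge F^\be$ pointwise on $\H^\be_\o$, with equality exactly when $f_{\ovp,\be}=0$ in view of \eqref{DingTianEq} and Jensen's inequality (equality in Jensen forces $f_{\ovp,\be}$ constant, hence zero by the normalization in \eqref{foEq}). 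This already shows that a \KE edge potential is a simultaneous critical value situation; the subtlety is the passage between minimizing on $\H^\be_\o$ and minimizing over the completion $\E_1$.

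For (i) $\Rightarrow$ (ii): if $\ovp$ is \KE edge then $\vp$ is a smooth (away from $D$) critical point of $F^\be$ on $\H^\be_\o$. Since $t\mapsto F^\be(u_t)$ is convex along the finite-energy geodesics of Theorem \ref{d1CompletionThm} — this is the Berndtsson-type convexity, which is property \ref{p1} for this functional and is one of the inputs we are permitted to cite via \cite{brm1} — a critical point is a global minimizer over $\E_1$. Concretely I would invoke \cite[Theorem 1.1]{brm1} of Berman, which is precisely the statement that finite-energy minimizers of the Ding-type functional $F^\be$ are exactly solutions of the corresponding \MA equation; combined with the convexity this gives that $\vp$ minimizes $F^\be$ over all of $\E_1$. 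The implication (ii) $\Rightarrow$ (iii): suppose $\vp\in\E_1$ minimizes $F^\be$. By Berman's theorem again, $\vp$ then satisfies the \KE edge equation in the pluripotential sense, so $f_{\ovp,\be}=0$ as an identity of measures; then \eqref{DingTianEq} gives $E^\be(\vp)=F^\be(\vp)$, while \eqref{EbetaFbetaIneq} gives $E^\be\ge F^\be$ everywhere on $\H^\be_\o$ and hence, by the lsc extension in Proposition \ref{EbetaExt} together with $d_1$-density (Lemma \ref{HbetaDense}), everywhere on $\E_1$. Therefore $\vp$ minimizes $E^\be$ as well. Finally (iii) $\Rightarrow$ (i): if $\vp\in\E_1$ minimizes $E^\be$, then since $E^\be\ge F^\be$ with equality at any $F^\be$-minimizer (which exists by Proposition \ref{Fbetacompact}), one deduces $\vp$ also minimizes $F^\be$, hence by Berman solves the \KE edge equation weakly; the elliptic regularity results \cite[Corollary 6.9]{ko}, \cite{JMR}, \cite{gp} then upgrade the weak solution to a genuine \K edge metric of angle $\be$, i.e. $\ovp$ is \KE edge.

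The main obstacle is the regularity step, (iii) $\Rightarrow$ (i), specifically promoting an a priori merely finite-energy ($\E_1$) weak solution of the \MA equation to a bona fide \K edge metric with the two-sided bound $\o_c/C\le\ovp\le C\o_c$ of Definition \ref{KEEDef}. This requires: first a Ko\l{}odziej-type $L^\infty$ estimate to get boundedness of $\vp$ (here using Lemma \ref{RicciPotentialLemma} to know the right-hand side density lies in $L^p$ for some $p>1$, which is exactly the hypothesis of \cite[Corollary 6.9]{ko}); then the edge Schauder/Evans--Krylov theory of \cite{JMR} and the higher-order estimates of \cite{gp} to obtain the polyhomogeneous conical regularity. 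I would present this as a citation rather than reprove it, noting as the text already does that "(see \cite{R14} for more details)." A secondary point requiring a line of care is justifying that the inequality $E^\be\ge F^\be$, a priori on smooth edge potentials, persists for the lsc extensions on $\E_1$ — this follows because $F^\be$ is $d_1$-continuous (Proposition \ref{FbetaExt}) whereas $E^\be$ is the greatest $d_1$-lsc extension (Proposition \ref{EbetaExt}), so for $u\in\E_1$ and $u_k\in\H^\be_\o$ with $d_1(u_k,u)\to0$ one has $E^\be(u)\le\liminf_k E^\be(u_k)$ is the wrong direction; instead one uses that $\sup$ of the lsc-regularization over $\eps$-balls of $E^\be$ dominates that of $F^\be$ because it does so pointwise on the dense set, which is immediate from the explicit formula for the lsc extension in \ref{a2}.
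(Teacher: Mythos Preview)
Your overall strategy---cite Berman \cite[Theorem~1.1]{brm1} for the variational characterization and then the regularity package \cite{ko,JMR,gp}---matches exactly what the paper does; indeed the paper does not give a proof of this theorem at all beyond the sentence immediately preceding it. Your implications (i)~$\Rightarrow$~(ii) and (ii)~$\Rightarrow$~(iii) are fine, and the closing remark on why $E^\be\ge F^\be$ persists on $\E_1$ (via the explicit lsc formula in \ref{a2} and the $d_1$-continuity of $F^\be$) is correct and worth including.

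There is, however, a genuine gap in your (iii)~$\Rightarrow$~(i). You assert that an $F^\be$-minimizer ``exists by Proposition~\ref{Fbetacompact}'', but that proposition only gives a minimizer from a \emph{$d_1$-bounded} minimizing sequence; nothing in your hypotheses furnishes such a sequence. Knowing that $E^\be$ attains its infimum at $\vp$ does not by itself tell you that $F^\be$ is bounded below, let alone that a bounded minimizing sequence exists---the inequality $E^\be\ge F^\be$ goes the wrong way for that. So the reduction of $E^\be$-minimizers to $F^\be$-minimizers via an auxiliary $F^\be$-minimizer is circular as written. The fix is to invoke what Berman actually proves: his thermodynamical formalism treats the K-energy directly, showing via Legendre duality on the space of probability measures that any finite-energy minimizer of $E^\be$ already satisfies $\o_\vp^n = e^{f_{\o,\be}-\vp}\o^n$ in the pluripotential sense, with no detour through $F^\be$. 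Once you have that weak \MA equation, your regularity chain (\Kolodziej $L^\infty$ via Lemma~\ref{RicciPotentialLemma}, then \cite{JMR,gp}) completes the proof exactly as you outlined.
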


The regularity condition \ref{p3} for $F^X$ and $E^X$ is
due to the following result of Berman--Witt Nystr\"om \cite[Theorem 3.3]{bwn}.

\begin{theorem} 
\label{BWNRThm}
Suppose $(M,\JJJ,\o)$ is Fano, and that $X\in\autMJ$
satisfies \eqref{AssumpXEq}. Let $\vp \in \E^X_1$. The following are equivalent:
\noindent (i) $\ovp$ is a \KR soliton,
\noindent (ii) $\vp$ minimizes $F^X$,
\noindent (iii) $\vp$ minimizes $E^X$.

\end{theorem}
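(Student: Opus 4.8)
The plan is to take $F^X$ as the primary object and prove (i)$\,\Leftrightarrow\,$(ii) by the variational method of Berman and Berman--Witt Nystr\"om \cite{brm1,bwn}, then obtain the statements about $E^X$ from the Ding--Tian type identity relating $E^X$ and $F^X$ together with a Jensen inequality. In the terminology of Hypothesis \ref{MainHyp}, proving (ii)$\,\Rightarrow\,$(i) is tantamount to verifying property \ref{p3} for $F^X$, and the convexity used for (i)$\,\Rightarrow\,$(ii) is property \ref{p1}. For (i)$\,\Rightarrow\,$(ii): taking the first variation of \eqref{ModifFEq} at $\vp\in\H^X_\o$ in a direction $\nu$, and using that the differential of $\h{\rm AM}_X$ is $\nu\mapsto\V\int_M\nu\,e^{\psi^X_{\o_\vp}}\o_\vp^n$ by its construction in Lemma \ref{AMXDefLemma}, one sees that $\vp$ is a critical point of $F^X$ iff $e^{\psi^X_{\o_\vp}}\o_\vp^n=c\,e^{f_\o-\vp}\o^n$ with $c=(\V\int_M e^{f_\o-\vp}\o^n)^{-1}$; applying $\i\ddbar\log$ to this equation and using $\i\ddbar\psi^X_{\o_\vp}=\calL_X\o_\vp$, $\i\ddbar f_\o=\Ric\o-\o$ and $\i\ddbar\vp=\o_\vp-\o$ shows it is equivalent to \eqref{KRsolitonEq}, the normalization being forced by $\int_M e^{\psi^X_{\o_\vp}}\o_\vp^n=\int_M e^{f_{\o_\vp}}\o_\vp^n=V$. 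So a K\"ahler--Ricci soliton is a critical point. Moreover $\h{\rm AM}_X$ is affine along the $d_1$-geodesics of Theorem \ref{d1CompletionThm} --- the computation parallels Lemma \ref{E1capH0Lemma}(ii), using the homogeneous Monge--Amp\`ere geodesic equation together with the self-adjointness of $L_\eta^X$ noted after \eqref{LetaXEq} --- while $\vp\mapsto-\log\V\int_M e^{f_\o-\vp}\o^n$ is convex along them by Berndtsson's subharmonicity theorem \cite{brn}. Hence $F^X$ is convex along $d_1$-geodesics, so every critical point minimizes $F^X$ on $\H^X_\o$, and then on all of $\E^X_1$ by the $d_1$-density of $\H^X_\o$ in $\E^X_1$ and the $d_1$-continuity of the extension (Proposition \ref{FModifExt}).

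For (ii)$\,\Rightarrow\,$(i): given a minimizer $u\in\E^X_1$ of $F^X$, I would first show it satisfies the above Euler--Lagrange equation in the pluripotential sense, by differentiating $t\mapsto F^X(u+tv)$ at $t=0$ for smooth $v$ --- immediate for $-\h{\rm AM}_X$, and for $-\log\V\int_M e^{f_\o-u}\o^n$ a Berman--Boucksom--Guedj--Zeriahi type argument via monotone convergence of the associated measures \cite{bbgz,bbegz}. The essential point is then regularity: since $\psi^X_{\o_u}=\psi^X_\o+Xu$ is uniformly bounded by \eqref{psiXEq}, the measure $c\,e^{f_\o-u}\o^n$ has density in $L^p(M,\o^n)$ for every $p$, so Ko\l odziej's estimate \cite{ko} forces $u\in L^\infty$, hence $u$ is continuous; with the first order term $Xu$ now controlled, the standard complex Monge--Amp\`ere bootstrap (Evans--Krylov and Schauder) upgrades $u$ to $C^\infty(M)$, and the equation becomes \eqref{KRsolitonEq}. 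This is exactly \cite[Theorem 3.3]{bwn}.

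For the equivalences with $E^X$: write \eqref{ModifEEq} as $E^X(\vp)=F^X(\vp)+c_0-G(\vp)$, where $c_0:=\V\int_M(f_\o-\psi^X_\o)e^{\psi^X_\o}\o^n$ depends only on $(\o,X)$ and $G(\vp):=\V\int_M(f_{\o_\vp}-\psi^X_{\o_\vp})e^{\psi^X_{\o_\vp}}\o_\vp^n$. Applying Jensen's inequality for the probability measure $\V e^{\psi^X_{\o_\vp}}\o_\vp^n$ to the function $f_{\o_\vp}-\psi^X_{\o_\vp}$, and using $\V\int_M e^{f_{\o_\vp}}\o_\vp^n=1$, gives $G(\vp)\le 0$, with equality iff $f_{\o_\vp}-\psi^X_{\o_\vp}$ is constant --- and that constant is then forced to be $0$, i.e.\ iff $\o_\vp$ is a K\"ahler--Ricci soliton; this is the sharp form of \eqref{EmodifFmodifIneq}. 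So if $\o_\vp$ is a soliton then $G(\vp)=0$, hence $E^X(\vp)=F^X(\vp)+c_0=\inf_{\E^X_1}F^X+c_0\le E^X(\psi)$ for every $\psi\in\E^X_1$, which gives (i)$\,\Rightarrow\,$(iii). For the converse (iii)$\,\Rightarrow\,$(i), I would argue that a minimizer $u$ of $E^X$ has finite (modified) entropy, hence satisfies the Euler--Lagrange equation of $E^X$ weakly, and then the a priori estimates for this Monge--Amp\`ere equation with finite-entropy data --- again \cite[Theorem 3.3]{bwn}, parallel to Theorem \ref{BRM1Thm} in the conic case --- give $u\in C^\infty(M)$ and, after an integration by parts using $\Ent\ge 0$, that $\o_u$ solves \eqref{KRsolitonEq}. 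Combined with (i)$\,\Leftrightarrow\,$(ii) this closes the loop (ii)$\,\Leftrightarrow\,$(iii).

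The hard part will be the two regularity statements --- that a finite-energy minimizer of $F^X$, respectively of $E^X$, is smooth and solves \eqref{KRsolitonEq}. For $F^X$ this is comparatively soft: the uniform bound on $\psi^X_{\o_u}$ coming from \eqref{psiXEq} reduces it to a Ko\l odziej-type $L^\infty$ estimate followed by a routine bootstrap. For $E^X$ it is the genuinely deep statement that finite-entropy minimizers of the modified K-energy are smooth; this is where we must really import \cite{bwn}, and it is the analogue, in the soliton setting, of the input recorded in Theorem \ref{BRM1Thm} for \KE edge metrics.
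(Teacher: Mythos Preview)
The paper does not prove this theorem; it is stated without proof and attributed to Berman--Witt Nystr\"om \cite[Theorem 3.3]{bwn}, serving only as the input needed to verify property \ref{p3} in the soliton case. Your sketch is essentially an outline of their argument, and the overall architecture --- first variation identifies the Euler--Lagrange equation with \eqref{KRsolitonEq}, Berndtsson convexity of $F^X$ along geodesics promotes critical points to minimizers, Ko\l odziej plus bootstrap gives regularity of $F^X$-minimizers, and the Jensen inequality on the probability measure $\V e^{\psi^X_{\o_\vp}}\o_\vp^n$ links $E^X$ and $F^X$ --- is the correct one.

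Two technical cautions on your sketch. First, your appeal to \eqref{psiXEq} for a uniform bound on $\psi^X_{\o_u}=\psi^X_\o+Xu$ when $u$ is merely in $\E^X_1$ is not immediate: \eqref{psiXEq} is proved via the moment map for smooth potentials, and $Xu$ need not make pointwise sense for singular $u$. In \cite{bwn} this is circumvented by defining the weighted measure $e^{\psi^X_{\o_u}}\o_u^n$ as the weak limit along smooth decreasing approximations, which inherits the comparison with $\o_u^n$ from the smooth estimate; the Euler--Lagrange equation and the subsequent Ko\l odziej step are then formulated in terms of this measure. Second, the present paper only extends $F^X$ to $\E^X_1$ (Proposition \ref{FModifExt}), not $E^X$, so making sense of ``$\vp$ minimizes $E^X$'' for $\vp\in\E^X_1$ and carrying out your (iii)$\,\Rightarrow\,$(i) step genuinely requires the machinery of \cite{bwn}; this is precisely why the paper imports the result rather than proving it.
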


\section{Property \ref{p6} via a partial Cartan decomposition}
\lb{CartanSec}

The purpose of this section is to give explicit criteria 
that imply  condition \ref{p6} in all cases of interest for us. The main 
result is Proposition \ref{p6Prop}, while the main
technical ingredient is Proposition \ref{PartialCartanProp}.

\subsection{Partial Cartan decomposition}

Recall the following form of the classical Cartan decomposition
\cite[Proposition 32.1, Remark 31.1]{Bump}.

\bthm
\lb{CartanThm}
Let $S$ be a compact connected semisimple Lie group. Denote
by $(S^\CC,\JJJ)$ the complexification of $S$, namely the unique connected
complex Lie group whose Lie algebra is the complexification of that
of $\fs$, the Lie algebra of $S$. Then
the map $C$ from $S\times\fs$ to $S^\CC$ given by 
\beq
\lb{CartanMapFirstCaseEq}
(s,X)\mapsto C(s,X):=s\exp_I\JJJ X
\eeq 
is a diffeomorphism.
\ethm

The following result can be thought of as an extension of Theorem \ref{CartanThm}
to the setting of a compact (but not necessary semisimple) Lie group.
We state a result in a form that will be most useful for us in applications,
albeit it not being quite optimal, perhaps.

\bprop
\label{PartialCartanProp}
Let $K$ be a 
compact connected subgroup of a connected complex
Lie group $(G,\JJJ)$.
Denote by $\fk$ and $\fg$ their Lie algebras. Suppose that
\beq
\label{kLieEq}
\baeq
\fk &=\fa\op\tfk,\cr
\fg &=\fa\op\tfk\op \JJJ\tfk,
\eaeq
\eeq
where $\fa$ is a complex Lie subalgebra of $\fg$ contained in the  center $\fz(\fk)$ of $\fk$,
and $\tfk$ is a Lie subalgebra of $\fk$.
Then the map 
$C:K \times \tfk \to G$ given by 
$C(k,X)=k\exp_{I}\JJJ X$ is surjective.
\eprop

\bpf
We start with several simple claims.
\bclaim
\label{fzfkLemma}
$
\fz(\fk)=\fa\op\fz(\tfk)$.
\eclaim

\bpf
The inclusion $\fz(\fk)\supset\fa\op\fz(\tfk)$ follows from \eqref{kLieEq}.
For the converse, let $X\in \fz(\fk)$. Since $X\in\fk$, 
by \eqref{kLieEq},
$X=X_1+X_2$ for $X_1\in \fa, X_2\in \tfk$. Since $X_2\in \fz(\fk)$,
and $\tfk\cap \fz(\fk)\subset\fz(\tfk)$, we are done.
\epf

\bclaim
\lb{tfkLemma}
(i)
$[\fk,\fk]=[\tfk,\tfk]$.
(ii)
$\tfk=\fz(\tfk)\op[\tfk,\tfk]$.

\eclaim

\bpf
(i) This follows from \eqref{kLieEq} since $[\fa,\fk]=0$. 

\noindent
(ii) Since $K$ is compact \cite[Proposition 6.6 (ii), p. 132]{Helg} gives
\beq
\lb{HelgEq}
\fk=\fz(\fk)\op [\fk,\fk].
\eeq 
From (i) and Claim \ref{fzfkLemma}, 
$$
\fk=\fz(\fk)\op[\tfk,\tfk]=\fa\op\fz(\tfk)\op[\tfk,\tfk].
$$
The conclusion now follows from \eqref{kLieEq}.
\epf

\bclaim
\lb{centerLemma}
$\fz(\fg)=\fa\op\fz(\tfk)\op \JJJ\fz(\tfk)$.
\eclaim

\bpf
First, $\fa\subset \fz(\fk)\subset\fz(\fg)$ by \eqref{kLieEq}.
By Claim \ref{fzfkLemma}, $\fz(\tfk)\subset\fz(\fk)\subset\fz(\fg)$.
Since $\fz(\fg)$ is complex, we obtain 
$\fz(\fg)\supset\fa\op\fz(\tfk)\op \JJJ\fz(\tfk)$.
Conversely, let $X\in\fz(\fg)$. By \eqref{kLieEq}, $X=X_1+X_2+X_3$, 
with $X_1\in\fa, X_2\in \tfk\cap \fz(\fg)=\fz(\tfk)$, 
and $X_3\in J\tfk\cap\fz(\fg)=\JJJ(\tfk\cap\fz(\fg))=J\fz(\tfk)$,
since $\fz(\fg)$ is complex.
\epf

Let $Z(K)$ and $Z(G)$ 
denote the connected closed Lie groups
whose Lie algebras are $\fz(\fk)$ and $\fz(\fg)$. 

\bclaim
\lb{Theta2Lemma}
The map 
$\Th_1:Z(K)\times\fz(\tfk)\ra Z(G)$ given by $(z,X)\mapsto z\exp_I\JJJ X$
is surjective. 
\eclaim

\bpf
Claims \ref{fzfkLemma} and \ref{centerLemma}
imply that $\dim Z(K)+\dim\tfk=\dim Z(G)$ and the differential of
$\Th_1$ at $(I,0)$ is invertible by \cite[Proposition 1.6, p. 104]{Helg}.
But, considering $\fz(\tfk)$ as an abelian Lie group with respect to
the additive structure, it follows that $\Th_1$ is a Lie group homomorphism,
thus it must be surjective as its image is a connected subgroup of the
same dimension as that of $Z(G)$.
\epf

We now conclude the proof of Proposition \ref{PartialCartanProp}.
Let $L$ denote the connected compact Lie subgroup of $K$ whose
Lie algebra is $[\tfk,\tfk]$ (since the Killing form
is negative on $[\tfk,\tfk]=[\fk,\fk]$, $L$ is indeed compact). 
By Claim \ref{tfkLemma}
and \cite[Proposition 6.6 (i), p. 132]{Helg}, $L$ is semisimple.
By Theorem \ref{CartanThm}, the map 
$\Th_2:L\times[\tfk,\tfk]\ra L^\CC$ given by
$$
\Th_2(l,X)=l\exp_I\JJJ X,
$$
is a diffeomorphism. 

\bclaim
\lb{MultMapLemma}
The multiplication maps $Z(K)\times L\ra K$, $Z(G)\times L^\CC\ra G$ are surjective.
\eclaim

\bpf
By Claim \ref{tfkLemma} (i) and \eqref{HelgEq},
the multiplication map $Z(K)\times L\ra K$ is a local isomorphism near $(I,I)$ by dimension count. The map is also a group homomorphism since elements of $Z(K)$
commute with elements of $K\supset L$. Thus, it is surjective.

The same argument works for the multiplication map $Z(G)\times L^\CC\ra G$. Here the dimension count is provided by Claim
\ref{tfkLemma} (ii), Claim \ref{centerLemma} and \eqref{kLieEq}, as together they give
$\fg = \fz(\fg)\op[\tfk,\tfk]\op \JJJ[\tfk,\tfk]$.
\epf

Given $k \in K$ and $X \in \fk$, observe that
$$
C(k,X)=zl\exp_{I}\JJJ X,
$$
where $z\in Z(K)$ and $l\in L$ are such that $k=zl$
(these exist by Claim \ref{MultMapLemma}).
Now let $X_1$ and $X_2$  
are the unique elements such that
$X_1\in \fz(\tfk)$, $X_2\in [\tfk,\tfk]$, and $X=X_1+X_2$,
given by Claim \ref{tfkLemma} (ii).
Since $\exp_I\JJJ X_1\in Z(G)$, 
$$
C(k,X)=z\exp_{I}\JJJ X_1 l\exp_I\JJJ X_2
=\Th_1(z,X_1)\Th_2(l,X_2).
$$
By Claim \ref{Theta2Lemma} and the fact that $\Th_2$ is a diffeomorphism,
it follows that $C$ surjects onto $Z(G)\times L^\CC$. However,
the multiplication map $Z(G)\times L^\CC\ra G$ is surjective by
Claim \ref{MultMapLemma}. Thus, $C$ is surjective, concluding the proof of Proposition \ref{PartialCartanProp}.
\epf

\subsection{Properness of the distance function on orbits }

Given data $(\mathcal R,d,F,G)$ satisfying \ref{a1}-\ref{a4}, the following result gives a criteria that implies a stronger version of condition  \ref{p6}. Though it may seem artificial at first glance, all the assumptions are verified naturally in the presence of canonical metrics.

\begin{proposition} 
\label{p6Prop}
Suppose   $K$ and $G$ satisfies the assumptions of Proposition \ref{PartialCartanProp}, $(\mathcal R,d,F,G)$ satisfies \ref{a1}-\ref{a4}, \ref{p4}, and $w \in \mathcal R$. We additionally assume the following:\\
\noindent (i) $K.w=w$. \\
\noindent (ii) For each $X\in\tfk$, $t\mapsto\exp_It\JJJ X.w$ 
is a $d$-geodesic whose speed depends continuously on $X$.\\
\noindent (iii) $G\times G \ni (f,g)\mapsto d(f.u,g.v)$
is  a continuous map for every $u,v\in\mathcal R$. \\
\noindent Then, for any $u,v\in\overline{\mathcal R}$ there exists $g\in G$
such that $d(u,g.v)=d_G(Gu,Gv)$.
\end{proposition}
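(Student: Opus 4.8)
The plan is to reduce the infimum defining $d_G(Gu,Gv)$ to an infimum over a single factor $G$, then use the partial Cartan decomposition (Proposition \ref{PartialCartanProp}) to split any $g\in G$ as $g = k\exp_I\JJJ X$ with $k\in K$ and $X\in\tfk$, and finally extract a convergent minimizing sequence using properness coming from the geodesic speed in assumption (ii). First I would use \ref{p4} and symmetry of $d$ exactly as in the first lemma of the proof of Theorem \ref{ExistencePrinc} to write $d_G(Gu,Gv)=\inf_{g\in G}d(u,g.v)$. By Lemma \ref{LipschitzExt} (applied as in Lemma \ref{ActionExtension}) the action of $G$ extends isometrically to $\overline{\mathcal R}$, so it suffices to treat $u,v\in\overline{\mathcal R}$, approximating by elements of $\mathcal R$ where needed; assumption (iii) together with this extension gives continuity of $g\mapsto d(u,g.v)$ on $\overline{\mathcal R}$ as well.

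Next, take a minimizing sequence $g_j\in G$ with $d(u,g_j.v)\to d_G(Gu,Gv)$. Write $g_j = k_j\exp_I\JJJ X_j$ with $k_j\in K$, $X_j\in\tfk$, using Proposition \ref{PartialCartanProp}. Since $K$ is compact, after passing to a subsequence $k_j\to k\in K$. The key step is to show $\{X_j\}$ is bounded in $\tfk$. For this I would use assumption (i): since $K.w=w$, for the special element $w$ one has $g_j.w = k_j\exp_I\JJJ X_j.w = k_j.(\exp_I\JJJ X_j.w)$, so by \ref{p4} $d(w,g_j.w)=d(k_j^{-1}.w,\exp_I\JJJ X_j.w)=d(w,\exp_I\JJJ X_j.w)$, which by assumption (ii) equals the $d$-length of the geodesic $t\mapsto\exp_It\JJJ X_j.w$, namely $\|X_j\|_\star$ for the speed function $X\mapsto\|X\|_\star$ that depends continuously on $X$. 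I would then show this speed function is proper on $\tfk$ — it is continuous and homogeneous of degree one (reparametrizing $t$), hence bounded below by a positive constant on the unit sphere of $\tfk$ unless it vanishes identically on some direction, which one rules out since $\exp_It\JJJ X.w$ is a nonconstant path in $\overline{\mathcal R}$ for $X\neq 0$ lying in $\tfk\subset\fg$ (the relevant directions are genuinely non-isometric, this being the point of the partial Cartan decomposition separating off the "isometry" directions $\tfk$ inside $\fk$ from $\JJJ\tfk$). Thus $\|X_j\|_\star\to\infty$ if $\|X_j\|\to\infty$. On the other hand $d(w,g_j.w)\le d(w,u)+d(u,g_j.v)+d(g_j.v,g_j.w)=d(w,u)+d(u,g_j.v)+d(v,w)$ is bounded, using \ref{p4} once more. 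Hence $\{X_j\}$ is bounded, so after a further subsequence $X_j\to X\in\tfk$, and therefore $g_j\to g:=k\exp_I\JJJ X$ in $G$.

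Finally, by continuity of $g\mapsto d(u,g.v)$ (assumption (iii) plus the isometric extension) we get $d(u,g.v)=\lim_j d(u,g_j.v)=d_G(Gu,Gv)$, which is the desired conclusion. The main obstacle I anticipate is the properness of the speed function in assumption (ii) — precisely, verifying that no nonzero $X\in\tfk$ gives a $d$-geodesic through $w$ of zero speed, and deducing coercivity on $\tfk$ from continuity and $1$-homogeneity; this is where the structure in \eqref{kLieEq} (isolating $\tfk$, on which the action is genuinely moving, from the central/isometric part $\fa$) does the real work, and where one must be careful that the bound on $d(w,g_j.w)$ really controls $X_j$ rather than being absorbed into the $K$-part.
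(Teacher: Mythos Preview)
Your approach is essentially the same as the paper's: reduce to $\inf_{g\in G}d(u,g.v)$ via \ref{p4}, parametrize $G$ by $K\times\tfk$ through the partial Cartan decomposition, use the triangle inequality with the fixed point $w$ together with the geodesic speed from (ii) to get properness in the $\tfk$-direction, and conclude by continuity from (iii). The paper phrases this as showing the function $(k,X)\mapsto d(u,C(k,X).v)$ is continuous and proper on $K\times\tfk$ rather than extracting a minimizing subsequence, but this is only a cosmetic difference. For $u,v\in\overline{\mathcal R}$ the paper proves that the functions $(k,X)\mapsto d(u_j,C(k,X).v_j)$ converge \emph{uniformly} to $(k,X)\mapsto d(u,C(k,X).v)$ via the estimate $|d(u,C(k,X).v)-d(u_j,C(k,X).v_j)|\le d(u,u_j)+d(v,v_j)$, which transfers properness and continuity; your treatment of this step is sketchier but points in the same direction.

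Your anticipated obstacle --- positivity of the speed $c_X$ for $X\neq 0$ --- is real, and the paper simply does not address it in the proof of the abstract proposition: it writes $d(w,\exp_It\JJJ X.w)=c_Xt$ with $c_X$ continuous in $X$ and immediately asserts properness. In each concrete application (\S\ref{KESec}--\S\ref{cscSec}) the relevant one-parameter groups act nontrivially on the canonical metric, so $c_X>0$ holds; but as stated, the abstract Proposition~\ref{p6Prop} tacitly assumes this, and your attempt to justify it from the Lie-algebraic structure \eqref{kLieEq} alone will not succeed --- it is a property of the action on $w$, not of $\fg$. You should treat $c_X>0$ for $X\neq 0$ as part of hypothesis (ii), as the paper effectively does.
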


\bpf
Let $u,v\in \mathcal R$. By \ref{p4} and Proposition
\ref{PartialCartanProp},
\beq
\lb{InfdGEq}
d_G(Gu,Gv)=\inf_{g\in G}d(u,g.v)=\inf_{k\in K,X\in\tfk}d(u,C(k,X).v).
\eeq
By \ref{p4},
$$
\baeq
d(u,C(k,tX).v)
&\ge
d(w,C(k,tX).w)-d(w,u)-d(C(k,tX).w,C(k,tX).v)
\cr
&=c_Xt-d(w,u)-d(w,v),
\eaeq
$$
since using \ref{p4}, (i) and (ii) we have
$$
d(w,C(k,tX).w)=d(k^{-1}.w,\exp_It\JJJ X.w)=d(w,\exp_It\JJJ X.w)=c_Xt,
$$
with $c_X$ depending continuously on $X\in\tfk$.
Since $\tfk$ is finite-dimensional it follows 
that $(k,X)\mapsto d(u,C(k,X).v)$ is proper. Hence the infimum in
\eqref{InfdGEq} is attained, because by (iii) 
$(k,X)\mapsto d(u,C(k,X).v)$ is continuous. This finishes the proof for $u,v \in \mathcal R$.

Finally, when $u,v\in\overline{\mathcal R}$, 
let $\{u_j\},\{v_j\}\subset \mathcal R$ denote sequences
that $d$-converge to $u$ and $v$. Then,
$$
\baeq
|d(u,C(k,X).v)-d(u_j,C(k,X).v_j)|
&\le 
d(u,u_j) + |d(u,C(k,X).v)-d(u,C(k,X).v_j)|
\cr
&\le
d(u,u_j) + d(C(k,X).v, C(k,X).v_j)
\cr
&=
d(u,u_j) + d(v, v_j),
\eaeq
$$
hence the continuous proper maps $(k,X) \to d(u_j,C(k,X).v_j)$ 
converge uniformly to $(k,X) \to d(u,C(k,X).v)$, making this latter map also continuous and proper. This gives $d_G(Gu,Gv)=d(u,g.v)$ for some $g \in G$, finishing the proof.
\epf
\subsection{Automorphism groups of canonical \K manifolds}

In this section we recall some classical theorems about the automorphism group of
a K\"ahler manifold $(M,\JJJ,\o)$ when the metric $\o$ is canonical,
following Gauduchon \cite{ga} to which we refer for more details. These results will be very helpful once we try to verify the conditions of Proposition \ref{p6Prop} in concrete situations.

Let $g(\,\cdot\,,\,\cdot\,)=\o(\,\cdot\,,\JJJ\,\cdot\,)$ denote the Riemannian
metric associated to $(M,\JJJ,\o)$.
Denote by $\Isom(M,g)_0$ the identity component of the isometry group of 
$(M,g)$. Since $M$ is compact so is $\Isom(M,g)_0$ \cite[Proposition 29.4]{Post}.
Denote by $\isom(M,g)$ the Lie algebra of $\Isom(M,g)_0$.
Consider the Lie subalgebra of $\autMJ$ of harmonic fields, 
\beq
\lb{faEq}
\mathfrak{a}:=\{X\in\autMJ\,:\, g(X,\,\cdot\,) \h{ is a $g$-harmonic 1-form}\},
\eeq
and the Lie subalgebra of $\isom(M,g)$ of Hamiltonian fields,
\beq
\lb{fhEq}
\mathfrak{h}:=\{X\in\isom(M,g)\,:\, \iota_X\o \h{ is an exact 1-form}\}.
\eeq

The following theorem is due to Matsushima and Lichnerowitz 
\cite[Theorem 3.6.1]{ga}.

\bprop
\lb{VectorFieldDecompProp}
Let $(M,\JJJ,\o,g)$ be as above. Suppose $g$ has constant scalar curvature. Then,
\begin{equation}
\label{ksplit}
\isom(M,g)=\mathfrak{a} \oplus \mathfrak{h},
\end{equation}
\begin{equation}\label{gsplit2}
\autMJ=\mathfrak{a} \oplus \mathfrak{h} \oplus \JJJ\mathfrak{h}.
\end{equation}
\eprop
In particular, Proposition \ref{VectorFieldDecompProp} implies
that $\Isom(M,g)_0\subset\AutMJz$.

Analogues of this result have been established in several settings. 
First, consider the 
Lie subalgebra \cite[Lemma A.2]{TZ}
\beq
\baeq
\lb{autXEq}
\aut^X(M,\JJJ)
&:=
\{Y\in\autMJ\,:\, [X,Y]=0\},
\eaeq
\eeq
and denote the associated connected complex Lie group
by 
\beq
\lb{AutMJXEq}
\Aut^X(M,\JJJ)_0\subset \AutMJz.
\eeq
Tian--Zhu proved the following result \cite[Appendix A]{TZ}.

\bprop
\lb{VectorFieldDecompSolitonProp}
Let $(M,\JJJ,\o,g)$ be as above, and let $X\in\autMJ$. Suppose 
$g$ is a \KR soliton. Then,
\begin{equation}\label{gsplit1}
\aut^X(M,\JJJ)=\isom(M,g)\oplus \JJJ\,\isom(M,g).
\end{equation}

\eprop

Next, let $D\subset M$ be a smooth divisor and consider
\beq
\baeq
\lb{autDEq}
\aut(M,D,\JJJ)
&:=
\{Y\in\autMJ\,:\, (1-\be)Y \h{ is tangent to $D$}\},
\eaeq
\eeq
and denote the associated connected complex  Lie group
by 
\beq \label{AutMDEq}
\Aut(M,D,\JJJ)_0\subset \AutMJz.
\eeq
Cheltsov--Rubinstein proved the following \cite[Theorem 1.12]{CR}.

\bprop
\lb{VectorFieldDecompEdgeProp}
Let $(M,D,\JJJ,\o,g)$ be as above and let $\be\in(0,1]$. Suppose 
$g$ is a \KE edge metric. Then,
\begin{equation}\label{gsplit}
\aut(M,D,\JJJ)=\isom(M,g)\oplus \JJJ\,\isom(M,g).
\end{equation}
\eprop

The following result is classical, and we only state its \KE version, whose proof we sketch.
\bthm
\lb{MatsIwasawaThm}
Let $(M,\JJJ,\o,g)$ be \KEno.
Then any maximally compact subgroup of $\AutMJz$
is conjugate to $\Isom(M,g)_0$.
\ethm

\begin{proof}By a Theorem of Iwasawa--Malcev \cite[Theorem 32.5]{st}, if $G$
is a connected Lie group then its maximal compact
subgroup must be connected and any two maximal compact subgroups are conjugate. But then by Proposition
\ref{VectorFieldDecompEdgeProp} ($\beta=1$) $\Isom(M,g)_0$ has to be a maximal compact
subgroup of $\AutMJz$.
\end{proof}

\section{\KE metrics}
\lb{KESec}

In this section we prove two results about existence of \KE metrics. Recall that $d_{1,G}$ and $J_G$ are defined in \ref{a4}
and \eqref{JGEq}, and $F^1$ and $E^1$
are defined in \eqref{FbetaEq} and \eqref{DingTianEq}. Our first result gives characterizes K\"ahler classes admitting \KE metrics. 
This theorem will be generalized in two different directions in 
the next two sections.

\begin{theorem} 
\label{KEexistenceThm}
Suppose $(M,\JJJ,\o$) is Fano. Set $G:=\Aut(M,\JJJ)_0$ and 
let $F\in\{F^1,E^1\}$.
The following are equivalent:

\smallskip
\noindent (i) 
There exists a K\"ahler--Einstein metric in $\mathcal H$.

\noindent (ii) 
$F$ is $G$-invariant and for some $C,D >0$,
$$
F(u) \geq C d_{1,G}(G0,Gu) - D, \qq u\in\H_0.
$$

\noindent (iii) 
$F$ is $G$-invariant and for some $C,D >0$,
$$
F(u) \geq C J_G(Gu) - D, \qq u\in\H_0.
$$
\end{theorem}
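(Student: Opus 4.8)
The plan is to deduce Theorem~\ref{KEexistenceThm} from the abstract existence/properness principle, Theorem~\ref{ExistencePrinc}, applied to the data $\calR=\calH_0$ (recall \eqref{H0Eq}), $d=d_1|_{\calH_0}$, $G=\Aut(M,\JJJ)_0$ acting by \eqref{factionAMEq}, and $F\in\{F^1,E^1\}$, and then to translate the conclusion back. By Lemma~\ref{E1capH0Lemma}(iii) the completion $\overline{\calR}$ is $\E_1\cap\h{\rm AM}^{-1}(0)$, where the extended functionals of Propositions~\ref{FbetaExt} and~\ref{EbetaExt} (with $\beta=1$) are defined. Granting Notation~\ref{MainNot} and Hypothesis~\ref{MainHyp} for this data (checked below), Theorem~\ref{ExistencePrinc} gives that $\mathcal M\neq\emptyset$ iff $F$ is $G$-invariant and $F(u)\ge Cd_{1,G}(G0,Gu)-D$ on $\calH_0$, which is exactly condition (ii). Moreover, by Theorem~\ref{BRM1Thm} (with $\beta=1$), a minimizer of $F^1$ or $E^1$ over $\E_1\cap\h{\rm AM}^{-1}(0)$ is a \KE potential, hence smooth, while conversely a \KE metric, after normalizing its potential to lie in $\calH_0$, minimizes these functionals over $\E_1\cap\h{\rm AM}^{-1}(0)$ since they are invariant under adding constants; thus $\mathcal M\neq\emptyset$ is equivalent to (i), giving (i)$\Leftrightarrow$(ii). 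Finally (ii)$\Leftrightarrow$(iii) is immediate from the two-sided comparison \eqref{JGdGEqv} of Lemma~\ref{JGPropernessLemma}, which turns a linear lower bound by $d_{1,G}(G0,Gu)$ into one by $J_G(Gu)$ and vice versa.

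Next comes the verification of \ref{a1}--\ref{a4} and the softer properties. \ref{a1} is Lemma~\ref{E1capH0Lemma}(iii); \ref{a3} and \ref{a4} are definitions, the $G$-action being \eqref{factionAMEq}. For \ref{a2}: $F^1$ extends $d_1$-continuously to all of $\E_1$ by Proposition~\ref{FbetaExt}, so its restriction to the closed subset $\E_1\cap\h{\rm AM}^{-1}(0)$ is continuous and hence equals the greatest lsc extension of $F^1|_{\calH_0}$; for $E^1$, Proposition~\ref{EbetaExt} provides the greatest $d_1$-lsc extension on $\E_1$, and since $\calH_0$ is $d_1$-dense in $\E_1\cap\h{\rm AM}^{-1}(0)$ its restriction is still the greatest lsc extension of $E^1|_{\calH_0}$. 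Property~\ref{p2} is Proposition~\ref{Fbetacompact} (resp.~Proposition~\ref{Ebetacompact}) with $\beta=1$; \ref{p3} follows from Theorem~\ref{BRM1Thm} as above; \ref{p4} is Lemma~\ref{dpIsomLemma}; \ref{p7} holds because both $F^1$ and $E^1$ change under \eqref{factionAMEq} by an additive constant depending only on $f\in G$ (a consequence of pullback-equivariance of their differentials together with the cocycle identity), so the difference $F(u,v)=F(v)-F(u)$ is $G$-invariant. Property~\ref{p1} is obtained by invoking the convexity theorems: for $F^1$, convexity along $d_1$-geodesics is Berndtsson's theorem \cite{brn}, and for $E^1$ it is convexity of the Mabuchi K-energy along $d_1$-geodesics; continuity in $t$ on $[0,1]$ is part of Proposition~\ref{FbetaExt} for $F^1$, and for $E^1$ follows from $d_1$-continuity of the lower-order terms of \eqref{Kendef} together with affinity of $\h{\rm AM}$ and lower semicontinuity of the entropy, combined with convexity and finiteness at the endpoints. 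Property~\ref{p5}, transitivity of $G$ on $\mathcal M$, is the Bando--Mabuchi uniqueness theorem for \KE metrics on Fano manifolds, in the form extended to $\E_1$.

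The substantive point is property~\ref{p6}, which we derive from Proposition~\ref{p6Prop}. Suppose $\mathcal M\neq\emptyset$, fix $w\in\mathcal M\subset\calH_0$, and set $K:=\Isom(M,g_w)_0$ with $g_w=\o_w(\,\cdot\,,\JJJ\,\cdot\,)$. Since $(M,\JJJ)$ is Fano, $b_1(M)=0$, so the harmonic subalgebra $\fa$ of \eqref{faEq} vanishes, and Proposition~\ref{VectorFieldDecompProp} (applicable since \KE metrics have constant scalar curvature) yields $\aut(M,\JJJ)=\fk\oplus\JJJ\fk$ with $\fk=\isom(M,g_w)$. Hence $K$ and $G$ satisfy the hypotheses of Proposition~\ref{PartialCartanProp} with $\fa=0$ and $\tfk=\fk$, so $C(k,X)=k\exp_I\JJJ X$ maps $K\times\fk$ onto $G$. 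Hypothesis (i) of Proposition~\ref{p6Prop}, $K.w=w$, holds because $\o_w$ is $K$-invariant; hypothesis (ii), that $t\mapsto\exp_It\JJJ X.w$ is a $d_1$-geodesic of speed continuous in $X\in\fk$, holds because such orbit curves are smooth solutions of the homogeneous complex \MA equation \eqref{MabuchiEq} (hence $d_1$-geodesics by Theorem~\ref{d1CompletionThm}), with $d_1$-speed $V^{-1}\int_M|\psi^X_{\o_w}|\,\o_w^n$ depending continuously on $X$; hypothesis (iii), continuity of $(f,g)\mapsto d_1(f.u,g.v)$, follows from continuity of the $G$-action on $(\calH_0,d_1)$, Lemma~\ref{dpIsomLemma}, and the triangle inequality. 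Proposition~\ref{p6Prop} then yields the strong form of \ref{p6}, completing the input for Theorem~\ref{ExistencePrinc}.

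I expect this last step---pinning down that the orbit curves $\exp_It\JJJ X.w$ are genuine $d_1$-geodesics with continuously varying speed, and assembling the partial Cartan decomposition of Proposition~\ref{PartialCartanProp} with the Matsushima--Lichnerowicz structure of $\Aut(M,\JJJ)_0$ in the presence of a \KE metric---to be the main obstacle; the remaining ingredients are either previously established results invoked from Section~\ref{ActSec} or routine bookkeeping. A secondary technical point worth care is the passage from the greatest lsc extension on $\E_1$ to the greatest lsc extension on the normalized slice $\E_1\cap\h{\rm AM}^{-1}(0)$ for $E^1$, but this is handled by the $d_1$-density of $\calH_0$.
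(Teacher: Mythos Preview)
Your proposal is correct and follows essentially the same route as the paper: apply Theorem~\ref{ExistencePrinc} to the data $(\calH_0,d_1,F,G)$, verify \ref{a1}--\ref{a4} and \ref{p1}--\ref{p7} via the same auxiliary results, and obtain \ref{p6} from Proposition~\ref{p6Prop} using the Matsushima--Lichnerowicz decomposition together with the fact that one-parameter orbits $t\mapsto\exp_It\JJJ X.w$ solve the homogeneous \MA equation. The only slip is notational: the tangent vector to this orbit at $t=0$ is $\psi^{\JJJsml X}_{\o_w}$ rather than $\psi^{X}_{\o_w}$ (cf.\ \eqref{dototEq}), so the $d_1$-speed is $V^{-1}\int_M|\psi^{\JJJsml X}_{\o_w}|\,\o_w^n$; otherwise your argument matches the paper's.
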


The purpose of our second theorem is to indicate what modifications are 
necessary in Tian's original conjecture 
(Conjecture \ref{TianConj} (ii)). 
Let $K\subset \AutMJz$ be a compact Lie subgroup. Denote,
$$
\H^K:=\{\eta\in\H\,:\, g.\eta=\eta \h{\ for any $g\in K$}\}.
$$
The isomorphic space of potentials, denoted $\H_0^K$, 
is defined after \eqref{HKEq}. 

\begin{theorem}
\label{KEGexistenceThm}
Suppose $(M,\JJJ,\o)$ 
is Fano and that $K$ is a maximal compact subgroup of $\AutMJz$.  Assume that 
$\o\in\H^K$.
Finally, let $F\in\{E^1,F^1\}$.
The following are equivalent:

\smallskip
\noindent (i) 
There exists a K\"ahler--Einstein metric in $\mathcal H^K$
and $\AutMJz$ has finite center.

\noindent (ii) For some $C,D >0$ and all $u \in \mathcal H^K_{0}$,
$$
F(u) \geq C d_1(0,u) - D.
$$

\noindent (iii) 
For some $C,D >0$ and all $u \in \mathcal H^K_{0}$,
$$
F(u) \geq C J(u) - D.
$$
\end{theorem}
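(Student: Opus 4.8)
The plan is to deduce Theorem~\ref{KEGexistenceThm} from the abstract existence principle (Theorem~\ref{ExistencePrinc}) applied to the $K$-invariant data $(\mathcal R,d,F,G)=(\H_0^K,\,d_1,\,F,\,\{e\})$ with \emph{trivial} group $G$, together with a separate argument showing that $d_1$-properness of $F$ on $\H_0^K$ already forces $\AutMJz$ to have finite center. The equivalence (ii)~$\Leftrightarrow$~(iii) is immediate from Proposition~\ref{Jproperness}, which makes $d_1(0,u)$ and $J(u)$ uniformly equivalent up to additive constants on $\H_0\supset\H_0^K$; so it suffices to prove (i)~$\Leftrightarrow$~(ii).

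For (i)~$\Rightarrow$~(ii), assume a $K$-invariant \KE potential $\psi$ exists and $\AutMJz$ has finite center. By Lemma~\ref{H0KCompletionLemma} the $d_1$-completion of $\H_0^K$ is $\E_1^K\cap\h{\rm AM}^{-1}(0)$, so \ref{a1}--\ref{a4} hold, \ref{a2} coming from the $K$-invariant restrictions of Propositions~\ref{FbetaExt} and~\ref{EbetaExt} ($\beta=1$); \ref{p4}, \ref{p6}, \ref{p7} are trivial for $G=\{e\}$, with $d_{G}=d_1$. Property~\ref{p1} is inherited from the non-invariant case, since the $d_1$-geodesic joining two $K$-invariant potentials is itself $K$-invariant by uniqueness of solutions of~\eqref{MabuchiEq}; and~\ref{p2} is the $K$-invariant restriction of Proposition~\ref{Fbetacompact}/\ref{Ebetacompact} ($\beta=1$), the $d_1$-limit produced there being $K$-invariant whenever the approximants are. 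Because $\psi$ exists, $\inf F$ over $\E_1^K\cap\h{\rm AM}^{-1}(0)$ equals $\inf F$ over $\E_1\cap\h{\rm AM}^{-1}(0)$ and is attained, so $\mathcal M\neq\emptyset$ and every element of $\mathcal M$ minimizes $F$ over $\E_1$, hence is a smooth \KE potential by Theorem~\ref{BRM1Thm} ($\beta=1$)---this gives~\ref{p3}. It remains to check~\ref{p5}, uniqueness of the minimizer, and this is where finiteness of the center enters. As $M$ is Fano, $b_1(M)=0$, so $\mathfrak a=0$ in~\eqref{faEq}; $K$ fixes $\psi$, hence lies in the stabilizer of $\psi$ in $\AutMJz$, which is compact and contains the maximal compact subgroup $\Isom(M,g)_0$ ($g$ the Riemannian metric of the \KE metric, Theorem~\ref{MatsIwasawaThm}), so equals it; by maximality $K=\Isom(M,g)_0$, and Proposition~\ref{VectorFieldDecompProp} (with $\mathfrak a=0$) reads $\autMJ=\fk\oplus\JJJ\fk$ with $\fk=\isom(M,g)$. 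Finiteness of the center forces $\fz(\autMJ)=0$, hence $\fz(\fk)=0$, so $K$ is semisimple, $\AutMJz=K^{\CC}$, and $Z(\AutMJz)=Z(K)\subset K$. By Bando--Mabuchi the \KE potentials in $\H_0$ form a single orbit $\AutMJz.\psi$, and $h.\psi$ is $K$-invariant iff $h$ normalizes $K$; but $N_{\AutMJz}(K)=K\cdot Z(\AutMJz)=K$ (standard for a semisimple compact $K$ inside its complexification, via the Cartan decomposition), so $\psi$ is the unique $K$-invariant \KE potential. Theorem~\ref{ExistencePrinc} now yields $F(u)\ge C d_1(0,u)-D$ on $\H_0^K$, which is (ii); this implication is also \cite[Theorem~2]{pssw}.

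For (ii)~$\Rightarrow$~(i) there are two parts. \emph{Existence:} by Remark~\ref{MainThmRemark}(i) the implication (ii)~$\Rightarrow$~(i) of Theorem~\ref{ExistencePrinc} uses only~\ref{p2}, valid for $(\H_0^K,d_1,F)$ as above; hence (ii) forces $\mathcal M\neq\emptyset$, and by the $K$-invariant form of Theorem~\ref{BRM1Thm} ($\beta=1$)---at a minimizer the first variation, tested against $K$-invariant functions, forces the \KE Monge--Amp\`ere equation, whose solutions are smooth by the regularity results of~\S\ref{P3SubSec}---every element of $\mathcal M$ is a smooth $K$-invariant \KE potential. \emph{Finiteness of the center:} if $Z(\AutMJz)$ were infinite then, $\AutMJz$ being a linear algebraic group ($-K_M$ ample), $Z(\AutMJz)^0$ is a positive-dimensional affine commutative group, hence non-compact, so it contains a one-parameter subgroup $\{g_s\}_{s\in\RR}$ with non-compact closure. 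Since $g_s$ is central and $\o\in\H^K$, the curve $s\mapsto g_s.0$ lies in $\H_0^K$ and is a non-constant smooth $d_1$-geodesic line (non-constant because $\{g_s\}\not\subset\Isom(M,g_\o)$), with $d_1(0,g_s.0)=c|s|\to\infty$ for some $c>0$. Along it the Mabuchi K-energy $E^1$ is affine---its slope is a constant Futaki-type invariant and its second variation vanishes because the generator is holomorphic---hence bounded above on one of the two rays; since $F\le E^1$ by~\eqref{EbetaFbetaIneq}, so is $F$, contradicting $F(u)\ge C d_1(0,u)-D$. Thus $\AutMJz$ has finite center, and (i) holds.

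The crux is the necessity direction (ii)~$\Rightarrow$~(i): that $d_1$-properness of $F$ on $\H_0^K$ by itself forces $\AutMJz$ to have finite center---precisely the mechanism behind the failure of Conjecture~\ref{TianConj}(ii) in Example~\ref{MainExam}. Its two ingredients are that orbits of holomorphic one-parameter subgroups are $d_1$-geodesics of strictly positive speed along which the K-energy is affine, and the linear-algebraicity of $\AutMJz$. On the existence side the only delicate point is the uniqueness of the $K$-invariant \KE potential (property~\ref{p5}), which is exactly where semisimplicity of $K$---equivalently, finiteness of the center---is genuinely needed, via the Cartan decomposition and the resulting identity $N_{\AutMJz}(K)=K$.
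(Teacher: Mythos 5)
Your implication (i) $\Rightarrow$ (ii) is essentially the paper's argument: the existence principle (Theorem \ref{ExistencePrinc}) with trivial group, \ref{p3} coming from the fact that a $K$-invariant \KE potential minimizes $F$ globally on $\E_1$, and \ref{p5} reduced to showing $N_{\AutMJz}(K)=K$. You derive the latter from semisimplicity of $K$ (forced by finiteness of the center via Proposition \ref{VectorFieldDecompProp}) together with the Cartan decomposition, whereas the paper invokes the normalizer theorem of Hazod et al.\ (Theorem \ref{HHSWZThm}); both routes work. The equivalence (ii) $\Leftrightarrow$ (iii) via Proposition \ref{Jproperness} is also as in the paper.

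The direction (ii) $\Rightarrow$ (i) is where your proposal has genuine gaps. First, for existence you produce a minimizer of $F$ over $\E_1^K\cap\h{\rm AM}^{-1}(0)$ via \ref{p2} and then appeal to a ``$K$-invariant form of Theorem \ref{BRM1Thm}.'' But that theorem characterizes minimizers over all of $\E_1$, and a minimizer over the invariant subspace is not a priori a global minimizer; your parenthetical sketch (``first variation tested against $K$-invariant functions'') would require redoing Berman's variational argument at a merely finite-energy minimizer (differentiability of the projected energy, the orthogonality relation, and the fact that two $K$-invariant measures agreeing on $K$-invariant test functions coincide), none of which you carry out, and which is even less clear for $F=E^1$. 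The paper avoids this entirely: Claim \ref{ContMethClaim} runs the classical continuity method with a $K$-invariant reference metric using only (iii), and gets $K$-invariance of the solution from uniqueness along the path. Second, your destabilizing-ray argument for finiteness of the center is incomplete as stated: the orbit $s\mapsto g_s.0$ of an \emph{arbitrary} non-compact central one-parameter subgroup need not be a $d_1$-geodesic. The computation \eqref{dototEq} makes the orbit a geodesic only when the generator is $\JJJ X$ with $X$ Killing for $g_\o$, so that $\JJJ X$ is a genuine gradient field via \eqref{XDecompEq}; a central unipotent one-parameter subgroup would not qualify, and nothing in your setup rules one out before the structure theory is available. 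This is repairable, since by that stage a \KE metric exists and Matsushima's theorem gives $\fz(\autMJ)=\fz(\fk)\oplus\JJJ\fz(\fk)$, so one may take the generator in $\JJJ\fz(\fk)$ --- but the reduction must be made explicit. The paper's route is different and self-contained: it uses (ii) to bound $d_1(u,g.u)$ on $N_K(\AutMJz)$, deduces from the Cartan decomposition and the positive speeds of the rays $t\mapsto\exp_It\JJJ X.u$ that $N_K(\AutMJz)$ is compact, hence equal to $K$, and then the center, a compact group whose Lie algebra is a complex subalgebra of $\isom(M,g_{\o_u})$, must be finite.
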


\bremark
\lb{Dingd1Remark}
In Theorems \ref{KEexistenceThm}
and \ref{KEGexistenceThm} when $F=F^1$ one {\it cannot} replace $d_1$ by
the Mabuchi metric $d_2$. 
Indeed, according to \eqref{FbetaEq}, Jensen's inequality, and
Proposition \ref{Jproperness}, for $\vp\in\H_0$,
\beq
\label{FbetaAM0Eq}
F^1(\vp)
=
-\log \V\int_M e^{f_{\o}-\vp}{\o}^n
\le 
\V\int_M (\vp-f_\o)\on
\le
C(\sup_M\vp+1)\le C(d_1(0,\vp)+1). 
\eeq
If $F^1$ were $d_2$-proper it would follow that
$d_2(0,\vp)\le C' (d_1(0,\vp)+1)$. However, this is impossible.
E.g., when $M$ is toric, each of the $d_p$ metrics are equivalent (via the Legendre transform) to the $L^p$ metric on the space of convex functions on the Delzant polytope $P$ of $M$ \cite[Proposition 4.5]{guedj}, and one can construct $d_2$-unbounded sequences contained in a $d_1$-unit ball. More generally, for arbitrary $M$, the results of \cite{da4} can be readily used to construct such sequences. 
Finally, properness of the Calabi metric 
is not the correct notion either, since
$J$ is unbounded on $\H$ (Proposition \ref{Jproperness}) while Calabi's metric has finite diameter \cite{Calabi54}.

In connection with \cite[Conjecture 6.1]{c1}, it would be interesting to see if similar facts also hold the for the K-energy $E^1$.
\eremark

\begin{remark}
It would be interesting to extend Theorem \ref{KEGexistenceThm}
to the settings of \KR solitons or \KE edge metrics. 
Also, it is possible to modify the proof of Theorem 
\ref{KEexistenceThm} to other settings, e.g., 
Sasaki--Einstein metrics, twisted
KE metrics \cite{zz}, or multiplier Hermitian structures
\cite{Mab2003}.
For brevity, we do not pursue these here and leave this
and related extensions to the reader.
\end{remark}

\subsection{Proof of Theorem \ref{KEexistenceThm}}
\lb{FirstKEThmSubSec}

The equivalence of (ii) and (iii) is the content of Lemma
\ref{JGPropernessLemma}.

For the equivalene between (i) and (ii) 
we wish to apply Theorem \ref{ExistencePrinc} to the data
$$
\mathcal R=\H_0, \q d=d_1, \q 
F\in\{E^1,F^1\}, \q G:=\Aut_0(M,\JJJ).
$$
First, we go over Notation \ref{MainNot}.
First, in \ref{a1}, 
$\overline{\mathcal R}=\E_1\cap \h{\rm AM}^{-1}(0)$ by Theorem \ref{d1CompletionThm}
and Lemma \ref{E1capH0Lemma}.
Observe that \ref{a2} holds by Propositions \ref{FbetaExt} and \ref{EbetaExt} (with $\be=1$).
In \ref{a3}, the minimizers of $F$ are denoted by $\calM$.
Finally, \ref{a4} holds since $G\subset \AutMJz$ implies
that if $g\in G$ and $\eta\in\H$ then 
$g.\eta$ is both \K and cohomologous to $\eta$, i.e., $g.\eta\in\H$.
Thus, it remains to verify Hypothesis \ref{MainHyp}.

\begin{enumerate}[label = (P\arabic*)]
  \item
This is due to
Berndtsson \cite[Theorem 1.1]{brn} for $F^1$ and to
Berman--Berndtsson \cite[Theorem 1.1]{bb} for $E^1$.
  \item 
For $E^1$ this is Proposition \ref{Ebetacompact} with $\beta=1$.
For $F^1$, this follows from Proposition \ref{Fbetacompact} with $\beta=1$. 
  \item 
This is Theorem \ref{BRM1Thm} with $\beta=1$.  
  \item 
This is Lemma \ref{dpIsomLemma}.
  \item 
This follows from \ref{p3} and the Bando--Mabuchi uniqueness theorem 
\cite[Theorem A (ii)]{BM}. 
  \item Suppose $u \in \mathcal H_0$ is a K\"ahler-Einstein metric. We wish to apply Proposition \ref{p6Prop}  
with $K=\Isom(M,g_{u})_0$ and $G=\AutMJz$.
There are several points to check.
First, we verify the assumptions of Proposition \ref{PartialCartanProp} 
(used in Proposition \ref{p6Prop}):
(i) $K$ is a compact connected subgroup of $G$ by
Proposition \ref{MatsIwasawaThm}; 
(ii) According to Proposition
\ref{VectorFieldDecompEdgeProp} (with $\be=1$), Equation \eqref{kLieEq} holds
with $\fa=0$ and $\tfk=\isom(M,g_{\o_u})$.
Second, we verify the assumptions of Proposition \ref{p6Prop}:\newline
(i) $K.u=u$ by definition; \newline
(ii) 
for each $X\in\tfk$, $t\mapsto\exp_It\JJJ X.u$ 
is a $d_1$-geodesic. This is classical 
since 
by \eqref{XDecompEq} and the fact that 
$X \in \isom(M,{g_{\o_u}})$ it 
follows that  
\beq
\lb{JXGradEq}
\JJJ X=\nabla\psi^{\JJJsml X}_{\o_u}
\eeq
is a gradient (with respect to $g_{\o_u}$) vector field
\cite[Theorem 3.5]{Mabuchi87}. 
Indeed,
first remark that 
Set $\o_{\vp(t)}:=\o(t)=\exp_It\JJJ X.\o_u$. 
Thus, 
\beq
\lb{dototEq}
\dot \o(t)=
\frac d{dt} 
\exp_It\JJJ X.\o_u = 
\calL_{\JJJsml X}\o\circ \exp_It\JJJ X
=\i\ddbar\psi^{\JJJsml X}_{\o_u}\circ\exp_It\JJJ X,
\eeq
and 
$$
\ddot \o(t)=\i\ddbar|\nabla\psi^{\JJJsml X}_{\o_u}|^2\circ\exp_It\JJJ X,
$$
i.e., $\ddot \vp(t)-|\nabla\dot\vp(t)|^2_{\o_{\vp(t)}}=0$,
which by an observation of Semmes and Donaldson 
\cite{Semmes,do} means that $\vp(t)$ solves
\eqref{MabuchiEq}. 
Thus, Theorem \ref{d1CompletionThm} implies
$t\mapsto \vp(t)$ is a $d_1$-geodesic.
The speed of this geodesic depends continuously on $X$ by 
\eqref{dototEq} and \eqref{distgeod}; \newline
(iii) $G\times G \ni (f,g)\mapsto d_1(f.u,g.v)=d_1(u,f^{-1}\circ g.v)$
by Lemma \ref{dpIsomLemma}, and this is a continuous map in $G\times G$
whenever $u,v\in\mathcal \H_0$ are fixed. Indeed, if $h_k\in G$
 converges to $h\in G$ then $h_k.\o_v$  
converges smoothly to $h.\o_v$ and using the Green kernel
of $\o$ we see that also $h_k.v$ converges smoothly to $h.v$.
Thus, $d_1(u,h_k.v)$  converges to $d_1(u,h.v)$ by \eqref{d1CharFormula}.
  \item 
Both $F^1$ and $E^1$ are the path-integrals of $G$-invariant
closed $1$-forms on $\calH_\o$.

\end{enumerate}

\subsection{Proof of Theorem \ref{KEGexistenceThm}}

By Proposition \ref{Jproperness}, it suffices 
to verify the equivalence between (i) and (ii). 
We apply Theorem \ref{ExistencePrinc}, but this time only in 
the direction (i) $\Rightarrow$ (ii).
To do so, we set
$$
\mathcal R=\H_0^K,\q d=d_1,\q 
F\in\{E^1,F^1\}, \q G=\{ I \}.
$$
By Lemma \ref{H0KCompletionLemma}, $\overline{\calR}=\E^K_1\cap \h{\rm AM}^{-1}(0)$. All the properties \ref{a1}-\ref{a4}, \ref{p1}-\ref{p7} are inherited from Therem \ref{KEexistenceThm}, with the exception of \ref{p3} and \ref{p5}. We verify these then and then Theorem \ref{ExistencePrinc} will automatically yield (ii).

\bclaim \label{p3Claim}
Assume that (i) holds. Property \ref{p3} holds.
\eclaim
\bpf
As $\mathcal H^K_0$ contains a \KE metric $u$, Theorem \ref{BRM1Thm} ($\beta=1$) gives that $u$ minimizes $F$ globally on $\mathcal E_1$, so in particular also on $\mathcal E^K_1$, giving $u \in \mathcal M$. If $v \in \mathcal M$ arbitrary, then $F(u)=F(v)$, hence another application of Theorem \ref{BRM1Thm} gives that $v$ is also smooth K\"ahler-Einstein, concluding that $\mathcal M \subset \mathcal H^K_0$. 
\epf

As we chose the group $G$ to be trivial, to verify \ref{p5}, we have to show that $\mathcal M$ is a singleton. Before proving this we need to understand properties of the group $K$. 

\bclaim
\lb{TrivialGroupClaim}
Suppose $(M,\JJJ,\o,g_{\o_u})$ is \KE with $u \in \mathcal H^K_0$. Then $K=\Isom(M,g_{\o_u})_0$.
\eclaim

\bpf By Theorem \ref{MatsIwasawaThm} $\Isom(M,g_{\o_u})_0$ is a maximal compact subgroup of 
$\AutMJz$. By assumption $K \subset \AutMJz$ is also maximal and trivially $K \subset \Isom(M,g_{\o_u})_0$, hence in fact $K = \Isom(M,g_{\o_u})_0$. 
\epf
Let $L$ be a group. Recall that the centralizer and normalizer of a subgroup $H$ are defined as follows:
$$
N_H(L):=\{g\in L\,:\, ghg^{-1}\in H, \ \forall h \in H\}.
$$
$$
C_H(L):=\{g\in L\,:\, ghg^{-1}=h, \ \forall h \in H\}\subset N_H(L).
$$
Note that $C_L=C_L(L)$ is just the center of $L$. The following result is due to Hazod et al. \cite[Theorem A]{hhswz} and we will make us of it shortly.

\begin{theorem} 
\lb{HHSWZThm}Suppose $H$ be a compact subgroup of a connected Lie group $L$.  
Then the group $N_H(L)/(H C_H(L))$ is a finite.
\end{theorem}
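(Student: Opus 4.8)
The strategy is to analyse the conjugation action of $N:=N_H(L)$ on the normal subgroup $H$ and to reduce the statement to the classical finiteness of the Weyl group of a torus in a connected Lie group. Let $\phi\colon N\to\Aut(H)$ be the homomorphism $\phi(n)(h)=nhn^{-1}$. By definition $\ker\phi=C:=C_H(L)$, while $\phi$ maps $H$ \emph{onto} the group $\mathrm{Inn}(H)$ of inner automorphisms of $H$; consequently $\phi^{-1}(\mathrm{Inn}(H))=H\,C$, and the first isomorphism theorem gives an isomorphism
$$
N/(H\,C)\;\cong\;\phi(N)/\mathrm{Inn}(H)\;\le\;\mathrm{Out}(H):=\Aut(H)/\mathrm{Inn}(H).
$$
Hence it suffices to show that the image of $N$ in $\mathrm{Out}(H)$ is finite.

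Next I would reduce this to a statement about a torus. Since $H$ is compact, set $Z:=\big(Z(H^0)\big)^0$, the identity component of the centre of the identity component of $H$; it is a torus and a \emph{characteristic} subgroup of $H$, so $N=N_H(L)\subseteq N_Z(L)$ and restriction induces a homomorphism $\mathrm{Out}(H)\to\Aut(Z)$. I claim its kernel is finite. Indeed, an automorphism of $H$ trivial on $Z$ restricts to an automorphism of the characteristic semisimple subgroup $[H^0,H^0]$, hence agrees on it with an inner automorphism of $H$ up to the \emph{finite} group $\mathrm{Out}([H^0,H^0])$; since $H^0=Z\cdot[H^0,H^0]$ and $\pi_0(H)$ and $\operatorname{Hom}(\pi_0(H),Z(H^0))$ are finite, it follows that such automorphisms, taken modulo $\mathrm{Inn}(H)$, form a finite set. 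Combining with the first paragraph, it remains to prove that the image of $N_Z(L)$ in $\Aut(Z)\cong\mathrm{GL}(\dim Z,\ZZ)$ is finite, i.e.\ that $N_Z(L)/C_Z(L)$ is finite.

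This last fact is classical. One passes to the adjoint representation $\mathrm{Ad}\colon L\to\mathrm{GL}(\fg)$ of the Lie algebra $\fg$ of $L$; its kernel is $Z(L)$, and $\mathrm{Ad}(Z)$ is a compact torus inside the connected linear group $\mathrm{Ad}(L)$. Decomposing $\fg\otimes\CC$ into weight spaces for $\mathrm{Ad}(Z)$, one sees that $N_Z(L)$ acts on the \emph{finite} set of weights, and the kernel of this permutation action consists of elements $n$ with $[n,Z]\subseteq Z(L)$; passing to the Zariski closure of $\mathrm{Ad}(L)$ in $\mathrm{GL}(\fg)$ — a connected linear algebraic group in which $\mathrm{Ad}(Z)$ lies in an algebraic torus — and invoking the finiteness of $N_G(T)/C_G(T)$ for a torus $T$ in a connected linear algebraic group $G$, together with the compactness of $Z$, one concludes that any such $n$ in fact centralises $Z$. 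Hence $N_Z(L)/C_Z(L)$ embeds in a finite permutation group, and altogether the image of $N_H(L)$ in $\mathrm{Out}(H)$ is finite, as required; a self-contained treatment is given in \cite{hhswz}.

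The main obstacle is precisely this torus case: ruling out that conjugation by an element of $L$ induces a nontrivial unipotent (``shear'') automorphism of $Z$. This is where both the connectedness of $L$ (so that $\mathrm{Ad}$ has kernel $Z(L)$ and $\mathrm{Ad}(L)$ is an analytic subgroup of a linear algebraic group) and the compactness of $Z$ (so that $\Aut(Z)$ is the arithmetic group $\mathrm{GL}(k,\ZZ)$ rather than all of $\mathrm{GL}(k,\RR)$) enter in an essential way; the remaining reductions are routine bookkeeping with the standard structure theory of compact Lie groups.
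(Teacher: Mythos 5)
First, note that the paper does not actually prove this statement: it is quoted verbatim from \cite[Theorem A]{hhswz} and used as a black box, so there is no internal proof to compare against. Your opening reduction is correct: $\ker\phi=C_H(L)$ and $\phi^{-1}(\mathrm{Inn}(H))=H\,C_H(L)$, so $N_H(L)/(H\,C_H(L))$ embeds into $\mathrm{Out}(H)$; and the splitting of the problem into (a) the induced action on the central torus $Z=(Z(H^0))^0$ and (b) the automorphisms trivial on $Z$ (handled via finiteness of $\mathrm{Out}$ of a compact semisimple group, of $\pi_0(H)$, and of $H^1(\pi_0(H),Z(H^0))$) is a reasonable architecture. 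Part (b), though written telegraphically, is fillable by standard bookkeeping.

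There is, however, a genuine gap at exactly the point you flag as ``the main obstacle.'' The weight-space argument correctly shows that, after passing to a finite-index subgroup of $N_Z(L)$, the induced automorphism $\alpha$ of $Z$ satisfies $\alpha(z)z^{-1}\in Z\cap Z(L)$ for all $z$, i.e.\ $\alpha$ is a unipotent ``shear'' with values in $Z_0:=(Z\cap Z(L))^0$. When $Z\cap Z(L)$ is finite, connectedness of $Z$ forces $\alpha=\mathrm{id}$; but when $Z_0$ is positive-dimensional the tools you invoke cannot exclude a nontrivial shear. The shear takes values in $Z(L)=\ker\mathrm{Ad}$, so it is invisible in $\mathrm{Ad}(L)$ and in its Zariski closure: one already knows that $\mathrm{Ad}(n)$ centralizes $\mathrm{Ad}(Z)$, and the rigidity statement $|N_G(T)/C_G(T)|<\infty$ for algebraic tori returns exactly that information and nothing more. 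Compactness of $Z$ only places the shear in the discrete group $\mathrm{Hom}(\Lambda/\Lambda_0,\Lambda_0)\cong\ZZ^{m(k-m)}$ (with $\Lambda$ the integral lattice of $Z$ and $\Lambda_0$ that of $Z_0$), which is infinite, so no finiteness follows from discreteness alone. Ruling out these shears is the actual content of \cite[Theorem A]{hhswz} and requires different input (e.g.\ Iwasawa's theory of maximal compact subgroups of connected Lie groups, reducing to the normalizer of a maximal compact subgroup containing $Z$); as written, your argument defers precisely this step back to the reference it is meant to replace.
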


\begin{lemma} 
\lb{NormalizerLemma}
Suppose $(M,\JJJ,\o,g_{\o_u})$ is \KE and that $\AutMJz$ has finite center. Then, 
$N_{\Isom(M,g_{\o_u})_0}(\AutMJz)=\Isom(M,g_{\o_u})_0$.
\end{lemma}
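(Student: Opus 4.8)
Write $K:=\Isom(M,g_{\o_u})_0$ and $G:=\AutMJz$, so that $K\subseteq N_K(G)\subseteq G$ and the lemma amounts to the reverse inclusion $N_K(G)\subseteq K$. The plan is to prove that $N_K(G)$ is compact and then invoke maximality: since $g_{\o_u}$ is \KEno, the proof of Theorem \ref{MatsIwasawaThm} shows that $K$ is a maximal compact subgroup of $G$ (maximal with respect to inclusion), so any compact subgroup of $G$ containing $K$ must equal $K$; applying this to the compact subgroup $N_K(G)\supseteq K$ finishes the proof.

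To establish compactness of $N_K(G)$, the first step is the identity $C_K(G)=Z(G)$, where $Z(G)$ denotes the center of $G$. The inclusion $Z(G)\subseteq C_K(G)$ is clear. Conversely, let $g\in C_K(G)$. Since $K$ is connected, $g$ centralizes $K$ if and only if $\operatorname{Ad}(g)$ fixes $\fk:=\isom(M,g_{\o_u})$ pointwise. As $G$ is a complex Lie group, conjugation by $g$ is a holomorphic automorphism of $G$, hence $\operatorname{Ad}(g)$ is $\CC$-linear and commutes with $\JJJ$; consequently $\operatorname{Ad}(g)$ also fixes $\JJJ\fk$ pointwise. By the Matsushima--Lichnerowicz splitting valid for \KE metrics, $\autMJ=\fk\oplus\JJJ\fk$ (Proposition \ref{VectorFieldDecompEdgeProp} with $\be=1$; equivalently Proposition \ref{VectorFieldDecompProp} together with the vanishing $\fa=0$, which holds because $b_1(M)=0$). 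Therefore $\operatorname{Ad}(g)=\mathrm{id}$ on $\autMJ$, and since $G$ is connected, $g\in\Ker(\operatorname{Ad})=Z(G)$. By hypothesis $Z(G)$ is finite, so $C_K(G)$ is finite.

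The second step applies Theorem \ref{HHSWZThm} with $H=K$ and $L=G$: the quotient $N_K(G)/\bigl(K\cdot C_K(G)\bigr)$ is finite. Since $C_K(G)$ is finite, $K$ has finite index in $K\cdot C_K(G)$, hence finite index in $N_K(G)$. Now $K$ is compact and, by definition of the normalizer, normal in $N_K(G)$; writing $N_K(G)$ as a finite union of translates $g_iK$ of the compact set $K$ shows that $N_K(G)$ is compact. By the first paragraph, $N_K(G)=K$.

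The only step with genuine content is the identity $C_K(G)=Z(G)$: this is precisely where the \KE hypothesis enters, through the $\CC$-linearity of $\operatorname{Ad}$ on the complex Lie algebra $\autMJ$ together with the splitting $\autMJ=\isom(M,g_{\o_u})\oplus\JJJ\isom(M,g_{\o_u})$. The remaining passages --- from finiteness of $Z(G)$ to finite index of $K$ in $N_K(G)$ via Theorem \ref{HHSWZThm}, then to compactness of $N_K(G)$, then to $N_K(G)=K$ by maximality --- are routine group-theoretic bookkeeping. The one point to keep in mind is that ``maximal compact subgroup'' must be read in the sense of maximal for inclusion, which is exactly what the proof of Theorem \ref{MatsIwasawaThm} supplies.
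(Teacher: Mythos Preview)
Your proof is correct and follows essentially the same route as the paper: establish $C_K(G)=Z(G)$ via $\CC$-linearity of $\operatorname{Ad}(g)$ together with the Matsushima--Lichnerowicz splitting $\autMJ=\fk\oplus\JJJ\fk$, then feed this into Theorem~\ref{HHSWZThm} to deduce that $K$ has finite index in $N_K(G)$, hence $N_K(G)$ is compact, and conclude by maximality of $K$. Your write-up is in fact a bit more explicit than the paper's about the passage from finiteness of $C_K(G)$ to compactness of $N_K(G)$, and you correctly note that $\fa=0$ (since $b_1(M)=0$ in the Fano setting) is what makes the splitting read $\fk\oplus\JJJ\fk$ rather than $\fa\oplus\fh\oplus\JJJ\fh$.
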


\begin{proof} 
We claim that
\beq
\lb{equalityCentralizersEq}
C_{\Aut(M,\JJJsml)_0}(\AutMJz)
=
C_{\Isom(M,g_{\o_u})_0}(\AutMJz).
\eeq
One inclusion is by definition; for the converse
suppose that $h\in C_{\Isom(M,g_{\o_u})_0}(\AutMJz)$. The
map $\AutMJz\ni g\mapsto C_h(g):=hgh^{-1}\in \AutMJz$ is biholomorphic. Thus, $dC_h(\JJJ X)=\JJJ dC_h(X)$. Since
$C_h(g)=g$ whenever $g\in \Isom(M,g_{\o_u})_0$, it follows
that $dC_h(X)=X$ for each $X\in\isom(M,g_{\o_u})$. 
It follows from Proposition \ref{VectorFieldDecompProp}
that in fact $dC_h=\Id$ identically. Since $\AutMJz$
is connected (i.e., the union of all of its 1-parameter subgroups
passing through the identity), it follows that $C_h$ is
the identity map. Thus, $h\in C_{\Aut(M,\JJJsml)_0}(\AutMJz)$.

Hence, By Theorem \ref{HHSWZThm},
$$
N_{\Isom(M,g_{\o_u})_0}(\AutMJz)/\Isom(M,g_{\o_u})_0
$$ 
is finite, 
implying that $N_{\Isom(M,g_{\o_u})_0}(\AutMJz)$ is compact. 
By Theorem \ref{MatsIwasawaThm}, 
it follows that $N_{\Isom(M,g_{\o_u})_0}(\AutMJz)=\Isom(M,g_{\o_u})_0$.
\end{proof}

\bclaim \label{SingletonClaim}  Assume that (i) holds. Then  $\calM$ is a singleton, hence \ref{p5} holds.
\eclaim
\begin{proof} Suppose $u,v \in \mathcal M$. By Claims \ref{p3Claim} and \ref{TrivialGroupClaim} we have $\Isom(M,g_{\o_v})_0=\Isom(M,g_{\o_u})_0=K$. 
Thus,
$$
f^{-1} K f \subset \Isom(M,f^\star g_{\o_u})_0
=
\Isom(M,g_{\o_v})_0=K.
$$ 
Thus, $f \in N_K(\AutMJz)$. By Lemma \ref{NormalizerLemma}, $N_K(Aut_0(M,\JJJ))=K$, so $u=f. u = v$.
\end{proof}

We now turn to proving the direction  (ii) $\Rightarrow$ (i).

\bclaim
\lb{ContMethClaim}
If (ii) holds,
$\mathcal H^K_{0}$ contains a \KE potential.
\eclaim

\bpf
As remarked earlier, (iii) holds by Proposition \ref{Jproperness}.
The classical continuity method with a $K$-invariant reference metric
(observe $\H^K$ is nonempty by averaging an arbitrary element of $\H$ with
respect to the Haar measure of $K$) produces a \KE potential 
when $F=F^1$
\cite[Proposition]{Tian97}
or $F=E^1$ \cite[pp. 2651--2653]{RJFA}. 
Since solutions $\vp(t)$ to $\o_{\vp(t)}^n=\on e^{f_\o-t\vp(t)}$ are unique for each $t\in(0,1)$
\cite[Theorem 7.1]{brn}, it follows that the \KE potential must be $K$-invariant.
\epf

For the remainder of the proof we fix a \KE potential $u \in \mathcal H^K_0$ provided by the last claim. By Claim \ref{TrivialGroupClaim} we have $K = \Isom_0(M,g_{\o_u})$. It remains to show that the center of $\AutMJz$ is finite. For this we take a closer look at the normalizer of $K$:

\bclaim
\lb{InvCptClaim}
Suppose (ii) holds.
Then
$N_K(\AutMJz)$ is compact.
\eclaim
\bpf It is trivial to verify that $N_K(\AutMJz)$ acts on $\mathcal H^K_0$, hence  $g.u \in \mathcal H^K_0$ for any $g \in N_K(\AutMJz)$. As $g.u$ is \KE it follows that $F(u)=F(g.u)$. 

By (ii), 
$d_1(u,g.u)$ is uniformly bounded for $g \in N_K(\AutMJz)$. 
Let
$C$ be the map given by \eqref{CartanMapFirstCaseEq} 
(recall Proposition \ref{VectorFieldDecompEdgeProp})
and consider
$$
S:=C^{-1}(N_K(\AutMJz)).
$$
Since $C$ is continuous, $S$ is closed in  $K \times \isom(M,g_{\o_u})$. To show that $N_K(\AutMJz)$ is compact we only need to show that $S$ is bounded. For any  $(k, X) \in S$ we can write
\begin{flalign*}
d_1(u,C(k,\JJJ X).u) =  d_1(u,k\exp_{I}\JJJ X.u)=d_1(u,\exp_{I}\JJJ X.u)\ge \rho |X|,
\end{flalign*}
where, using Theorem \ref{d1Thm},
$$
\rho:= \inf_{Y \in \isomsml(M,g_{\o_u}), |Y|=1}
d_1(u, \exp_{I}\JJJ Y.u)>0,
$$
since $[0,\infty) \ni t \to \exp_{I}t\JJJ X.u \in \mathcal H_{0}$ 
is a $d_1$-geodesic ray initiating from $u$ by 
the proof of \ref{p6} in \S\ref{FirstKEThmSubSec}.
Thus, $|X|$ is uniformly bounded giving that $S$ is a bounded set.
\epf

As $K=\Isom(M,g_{\o_u})_0$ is maximally compact in $\AutMJz$, 
$$
N_K(\AutMJz)=K.
$$
Thus, by \eqref{equalityCentralizersEq} (which uses only Proposition \ref{VectorFieldDecompProp}), 
\beq
\lb{LastCInclEq}
C_{\Aut(M,\JJJsml)_0}
=
C_K(\AutMJz)
\subset
N_K(\AutMJz)=K.
\eeq
But the Lie algebra of $C_{\Aut(M,\JJJsml)_0}$
must be trivial, since it is complex and is a Lie subalgebra of 
$\isom(M,g_{\o_u})$. Indeed,  by  Proposition \ref{VectorFieldDecompProp},
the only complex Lie subalgebra of $\isom(M,g_{\o_u})$ is the trivial one.
Since $C_{\Aut(M,\JJJsml)_0}$ is compact by \eqref{LastCInclEq}, it must
be finite. This concludes the proof of Theorem \ref{KEGexistenceThm}.

\section{\KR solitons}
\lb{KRSSec}

We now state our main result concerning existence of \KR solitons.
Recall the definition of $\aut^X(M,\JJJ)$ and $\Aut^X(M,\JJJ)$
from \eqref{AutMJXEq}. In this section we set
\beq
\lb{GKRSDef}
G:=\Aut^X(M,\JJJ).
\eeq

\begin{theorem} 
\label{solitonKEexistenceThm}
Let $(M,\JJJ,\o)$ be a Fano manifold with $[\o]=c_1(X)$,
let $X\in\autMJ$, let $T$ be the group determined by $X$
given in \eqref{AssumpXEq}, and let $F\in\{F^X,E^X\}$.
The following are equivalent:

\noindent 
(i) 
There exists a \KR soliton associated to $X$ in $\mathcal H_{0}^T$.

\noindent 
(ii) 
$F$ is $G$-invariant and for some $C,D >0$,
$$
F(u) \geq C d_{1,G}(G0,Gu) - D, \qq u\in\H^T_0.
$$
\noindent 
(iii) 
$F$ is $G$-invariant and for some $C,D >0$,
$$
F(u) \geq C J_G(G0,Gu) - D, \qq u\in\H^T_0.
$$

\end{theorem}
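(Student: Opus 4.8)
The plan is to deduce Theorem \ref{solitonKEexistenceThm} from the general existence/properness principle (Theorem \ref{ExistencePrinc}), exactly as Theorem \ref{KEexistenceThm} was deduced, but with the Tian--Zhu modified functionals in place of $F^1,E^1$. I would apply Theorem \ref{ExistencePrinc} to the data $\mathcal R=\mathcal H_0^T$, $d=d_1$, $F\in\{F^X,E^X\}$, $G=\Aut^X(M,\JJJ)_0$. By Lemma \ref{H0KCompletionLemma} (with $K=T$), $\overline{\mathcal R}=\E_1^T\cap\h{\rm AM}^{-1}(0)$. Axiom \ref{a1} is clear, since $0\in\mathcal H_0^T$ ($\o$ is $T$-invariant by \eqref{AssumpXEq}); \ref{a3} merely names $\mathcal M$; \ref{a4} holds because every $g\in\Aut^X(M,\JJJ)_0$ commutes with $\JJJ X$, hence with the torus $T$, so preserves $T$-invariance, while $g\in\AutMJz$ maps $\mathcal H$ to $\mathcal H$. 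For \ref{a2}: for $F^X$ this is Proposition \ref{FModifExt}; for $E^X$ one writes $E^X$ as a $d_1$-lsc entropy term plus $d_1$-continuous terms and argues as in Proposition \ref{EbetaExt}, using that $\psi^X$ is bounded (\eqref{psiXEq}) so the reference measure $e^{\psi^X_\o}\o^n$ is comparable to $\o^n$.

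Next I would verify Hypothesis \ref{MainHyp}. \ref{p1}: convexity and continuity of $F^X,E^X$ along the weak geodesics of Theorem \ref{d1CompletionThm}, due to Berndtsson and Berman--Witt Nystr\"om \cite{brn,bwn}. \ref{p2}: Proposition \ref{FModifCompact} for $F^X$, and its $E^X$-analogue proved as in Proposition \ref{Ebetacompact} via Theorem \ref{EntropyCompactnessThm} (the passage to the slice $\h{\rm AM}^{-1}(0)$ is handled exactly as in the proof of Theorem \ref{KEexistenceThm}, using that both functionals are constant-invariant). \ref{p3}: Theorem \ref{BWNRThm}, which identifies the minimizers over $\overline{\mathcal R}$ with the smooth \KR solitons lying in $\mathcal H_0^T$. \ref{p4}: Lemma \ref{dpIsomLemma}, restricted to $G$. \ref{p5}: by \ref{p3} every element of $\mathcal M$ is a $T$-invariant soliton potential, and the Tian--Zhu uniqueness theorem for \KR solitons states that any two such are related by an element of $\Aut^X(M,\JJJ)_0=G$. \ref{p7}: $F^X$ and $E^X$ are path integrals over $\mathcal H_\o^T$ of $G$-invariant closed $1$-forms.

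The step requiring real care is \ref{p6}, which I would obtain from Proposition \ref{p6Prop}. Assuming (i), fix a $T$-invariant soliton $u\in\mathcal H_0^T$ and set $K:=\Isom(M,g_{\o_u})_0$. By the Tian--Zhu decomposition (Proposition \ref{VectorFieldDecompSolitonProp}), $\aut^X(M,\JJJ)=\isom(M,g_{\o_u})\oplus\JJJ\isom(M,g_{\o_u})$, so the hypotheses \eqref{kLieEq} of the partial Cartan decomposition hold with $\fa=0$ and $\tfk=\fk=\isom(M,g_{\o_u})$; in particular $K$ is a compact connected subgroup of $G$. Then I check the three conditions of Proposition \ref{p6Prop}: (i) $K.u=u$; (ii) for $X'\in\isom(M,g_{\o_u})$ the curve $t\mapsto\exp_It\JJJ X'.u$ stays in $\mathcal H_0^T$ — since $[X,X']=0$ forces $\JJJ X'$ to commute with $\JJJ X$, hence to preserve $T$-invariance — and is a $d_1$-geodesic of speed continuous in $X'$, by the Semmes--Donaldson computation used for \ref{p6} in \S\ref{FirstKEThmSubSec} (here $\JJJ X'=\nabla^{g_{\o_u}}\psi^{\JJJsml X'}_{\o_u}$); (iii) joint continuity of $(f,g)\mapsto d_1(f.v,g.w)$ for all $v,w\in\mathcal H_0^T$, via the Green kernel of $\o$ and \eqref{d1CharFormula}. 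This yields the strong form of \ref{p6} (Remark \ref{MainThmRemark}(iii)). Theorem \ref{ExistencePrinc} then gives the equivalence of (i) with the $d_{1,G}$-properness assertion in (ii), and the equivalence (ii)$\,\Leftrightarrow\,$(iii) follows from the $T$-invariant analogue of Lemma \ref{JGPropernessLemma}, whose proof uses only Lemma \ref{dpIsomLemma} and Proposition \ref{Jproperness}.

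I expect the main obstacle to be the bookkeeping around $T$-invariance: ensuring throughout that $G$ acts on $\mathcal H_0^T$, that the orbit directions $t\mapsto\exp_It\JJJ X'.u$ in \ref{p6} stay inside $\mathcal H_0^T$ and remain honest $d_1$-geodesics, and that $\h{\rm AM}_X$ (not $\h{\rm AM}$) is the functional entering the relevant continuity and monotonicity estimates (as in Lemma \ref{AMXLemma} and Proposition \ref{FModifCompact}). A secondary point is supplying, for $E^X$, the lsc-extension and minimizing-sequence-compactness statements (\ref{a2} and \ref{p2}) that the excerpt records explicitly only for $F^X$; these are routine given the entropy representation of $E^X$ and the boundedness of $\psi^X$, but they must be written out.
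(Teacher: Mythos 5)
Your treatment of the case $F=F^X$ coincides with the paper's: the same data $(\H_0^T,d_1,F^X,\Aut^X(M,\JJJ)_0)$, the same references for \ref{a1}--\ref{a4} and \ref{p1}--\ref{p5}, \ref{p7}, and the same verification of \ref{p6} via Proposition \ref{p6Prop} and the Tian--Zhu decomposition (Proposition \ref{VectorFieldDecompSolitonProp}) with $\fa=0$, $\tfk=\isom(M,g_{\o_u})$; your observation that the orbit directions stay in $\H_0^T$ because $[X,X']=0$ propagates through $\JJJ$ is exactly what is needed there. The reduction of (iii) to (ii) via Lemma \ref{JGPropernessLemma} also matches.

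Where you diverge is the case $F=E^X$, and there your route is genuinely harder than the paper's. The paper does \emph{not} run the existence principle for $E^X$ at all: for (i) $\Rightarrow$ (ii) it uses the pointwise inequality $E^X\ge F^X-C$ of \eqref{EmodifFmodifIneq} together with the already-proved properness of $F^X$, and for (ii) $\Rightarrow$ (i) it invokes the continuity-method results of Tian--Zhu and Berman--Witt Nystr\"om. This is why the paper states explicitly, after \eqref{ModifEEq}, that the extension to $\E_1^T$ "will only be needed for $F^X$." Your plan instead requires three ingredients for $E^X$ that are nowhere established in the paper and that you either assert or under-cite: (a) the greatest $d_1$-lsc extension of $E^X$ to $\E_1^T$ in the sense of \ref{a2}, which needs the approximation-by-Calabi--Yau-potentials argument of Proposition \ref{EbetaExt} adapted to the weighted entropy and the weighted energy $\h{\rm AM}_X$; (b) compactness of $E^X$-minimizing sequences (\ref{p2}); and, most seriously, (c) convexity and continuity of the \emph{extended} $E^X$ along finite-energy geodesics (\ref{p1}). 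Item (c) is not contained in \cite{brn} (which treats the Ding-type functionals) nor, in the generality you need, in \cite{bwn}; it is the modified analogue of the Berman--Berndtsson convexity theorem \cite{bb} extended to $\E_1^T$, and would require a separate proof of the kind developed in \cite{bdl}. Items (a) and (b) are plausibly "routine" as you say, but (c) is not, and without it your argument for $E^X$ does not close. The fix is simply to adopt the paper's shortcut: once $F^X$ is handled, \eqref{EmodifFmodifIneq} gives (i) $\Rightarrow$ (ii) for $E^X$ for free (the $G$-invariance of $E^X$ following as in Lemma \ref{GinvLemma}), and (ii) $\Rightarrow$ (i) is supplied by the cited existence results.
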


\begin{proof}
By Lemma \ref{JGPropernessLemma}, it suffices 
to verify the equivalence between (i) and (ii). 
We apply Theorem \ref{ExistencePrinc}, but this time only 
for $F=F^X$. Recall $G$ is defined in \eqref{GKRSDef}. 
For the remaining data in Notation \ref{MainNot} we set
$$
\mathcal R=\H_0^T,\q d=d_1.
$$
Thus, in \ref{a1}, 
$\overline{\calR}=\E^T_1\cap \h{\rm AM}^{-1}(0)$,
by Lemma \ref{H0KCompletionLemma}.
Observe that \ref{a2} holds by Proposition \ref{FModifExt}.
In \ref{a3}, the minimizers of $F^X$ are denoted by $\calM$.
Finally, \ref{a4} holds by Lemma \ref{ActionExtension}.
Thus, it remains to verify Hypothesis \ref{MainHyp}.

\begin{enumerate}[label = (P\arabic*)]
  \item
This is due to \cite[Theorem 1.1, Proposition 10.4]{brn} .
  \item 
This is Proposition \ref{FModifCompact}. 
  \item 
This is Theorem \ref{BWNRThm}.  
  \item 
This is Lemma \ref{dpIsomLemma}.
  \item 
This follows from \ref{p3} and the Tian--Zhu uniqueness theorem 
\cite{TZ}.
  \item Suppose $u \in \mathcal M$ is a \KR solition. We may apply Proposition \ref{p6Prop}  
with $K=\Isom(M,g_{\o_u})_0$ and $G$, since
the assumptions of Proposition \ref{PartialCartanProp} 
are verified by Proposition \ref{VectorFieldDecompSolitonProp}
and an argument identical to the proof of \ref{p6} in \S\ref{FirstKEThmSubSec}.
  \item 
$F^X$ is the path-integrals of $G$-invariant
closed $1$-forms on $\calH^X_\o$.

\end{enumerate}

This concludes the proof of Theorem \ref{solitonKEexistenceThm} for
$F=F^X$.

We now let $F=E^X$. By \eqref{EmodifFmodifIneq} and the previous paragraph
it suffices to verify the direction (ii) $\Rightarrow$ (i). 
This is done in \cite{TZ},\cite[Theorem 1.6]{bwn}. 
\end{proof}

\section{K\"ahler--Einstein edge metrics.} 
\lb{KEESec} 

We now state our main result concerning existence of \KE edge metrics. Recalling \eqref{AutMDEq}, in this section we set
\beq
\lb{GKEEDefEq}
G:=\Aut(M,D,\JJJ)_0.
\eeq

\begin{theorem} 
\label{KEEexistenceThm}
Suppose $(M,\JJJ,\o)$ is compact K\"ahler, $D\subset M$, and $\be\in(0,1)$ 
satisfy \eqref{ConicCohomCond}. 
Let $F\in\{F^\be,E^\be\}$.
The following are equivalent:

\smallskip
\noindent (i) 
There exists a K\"ahler--Einstein edge metric in $\mathcal H^\be$.

\noindent (ii) 
$F$ is $G$-invariant and for some $C,D >0$,
$$
F(u) \geq C d_{1,G}(G0,Gu) - D, \qq u\in\H^\be_0.
$$

\noindent (iii) 
$F$ is $G$-invariant and for some $C,D >0$,
$$
F(u) \geq C J_G(Gu) - D, \qq u\in\H^\be_0.
$$
\end{theorem}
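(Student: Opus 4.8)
The plan is to apply the general existence/properness principle (Theorem \ref{ExistencePrinc}) to the data $\mathcal R=\mathcal H^\be_0$, $d=d_1$, $F\in\{F^\be,E^\be\}$, and $G:=\Aut(M,D,\JJJ)_0$, exactly as was done for \KE metrics in \S\ref{FirstKEThmSubSec} and for \KR solitons in \S\ref{KRSSec}. First I would observe that the equivalence of (ii) and (iii) is immediate from Lemma \ref{JGPropernessLemma} (applied on $\E_1$, whose relevant subspace is $\E_1\cap\h{\rm AM}^{-1}(0)$, the metric completion of $\mathcal H^\be_0$ by Lemma \ref{HbetaDense} together with Lemma \ref{E1capH0Lemma}(iii)). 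So it remains only to prove (i)$\Leftrightarrow$(ii), and for this I verify Notation \ref{MainNot} and Hypothesis \ref{MainHyp}. Condition \ref{a1} holds with $\overline{\mathcal R}=\E_1\cap\h{\rm AM}^{-1}(0)$; \ref{a2} holds by Propositions \ref{FbetaExt} and \ref{EbetaExt}; \ref{a3} is the definition of $\calM$; \ref{a4} holds since $G\subset\Aut(M,\JJJ)_0$ preserves both the cohomology class and the edge structure of angle $\be$ along $D$ (as $G$ consists of automorphisms with $(1-\be)Y$ tangent to $D$).

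For Hypothesis \ref{MainHyp} I would check the seven properties in turn, each paralleling the \KE case. \ref{p1} (geodesic convexity of $F^\be$ and $E^\be$) is due to Berndtsson's convexity theorem and its twisted/edge versions as used in \cite{brn,JMR}. \ref{p2} is Proposition \ref{Fbetacompact} for $F^\be$ and Proposition \ref{Ebetacompact} for $E^\be$. \ref{p3} (regularity of minimizers) is Theorem \ref{BRM1Thm}. \ref{p4} is Lemma \ref{dpIsomLemma} (the action of any subgroup of $\AutMJz$ on $\mathcal H_0$ is a $d_1$-isometry). \ref{p5} (transitivity of $G$ on $\calM$) follows from \ref{p3} together with the Bando--Mabuchi-type uniqueness theorem for \KE edge metrics; here I would invoke the uniqueness result of \cite{JMR} (or its refinement) saying that two \KE edge metrics in $\mathcal H^\be$ differ by an element of $G=\Aut(M,D,\JJJ)_0$. \ref{p7} holds because $F^\be$ and $E^\be$ are path-integrals of $G$-invariant closed $1$-forms on $\mathcal H^\be_\o$ (the $G$-invariance of the cocycle reduces to invariance of the defining forms $e^{f_{\o,\beta}}\o^n$-type data under $G$).

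The main obstacle is \ref{p6}, proved via Proposition \ref{p6Prop} with $K=\Isom(M,g_{\o_u})_0$ (for $u\in\calM$ a \KE edge potential) and $G=\Aut(M,D,\JJJ)_0$. I must check the hypotheses of both Proposition \ref{PartialCartanProp} and Proposition \ref{p6Prop}. For the Cartan decomposition \eqref{kLieEq}, I would apply Proposition \ref{VectorFieldDecompEdgeProp} (the Cheltsov--Rubinstein splitting $\aut(M,D,\JJJ)=\isom(M,g)\oplus\JJJ\,\isom(M,g)$ at a \KE edge metric), which gives \eqref{kLieEq} with $\fa=0$ and $\tfk=\isom(M,g_{\o_u})$. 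For Proposition \ref{p6Prop}: (i) $K.u=u$ by definition of $K$; (ii) for $X\in\isom(M,g_{\o_u})$, the path $t\mapsto\exp_It\JJJ X.u$ is a $d_1$-geodesic with speed depending continuously on $X$ — this is the same Semmes--Donaldson computation as in \S\ref{FirstKEThmSubSec}, using that $\JJJ X=\nabla\psi^{\JJJ X}_{\o_u}$ is a gradient field and that $t\mapsto\vp(t)$ then solves the homogeneous complex \MA equation \eqref{MabuchiEq}; and (iii) $(f,g)\mapsto d_1(f.u,g.v)=d_1(u,f^{-1}g.v)$ is continuous on $G\times G$ for fixed $u,v\in\mathcal H^\be_0$, since $h_k\to h$ in $G$ forces $h_k.\o_v\to h.\o_v$ smoothly on $M\setminus D$ with uniform edge bounds, hence $h_k.v\to h.v$ in a way controlling the right-hand side of \eqref{d1CharFormula}. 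The one genuinely new verification relative to the smooth case is that these geodesic and continuity statements survive the edge singularities along $D$; I expect this to be routine given that $K$-invariant edge potentials and the automorphisms in $G$ all respect the edge structure, so all estimates take place in the class $\mathcal H^\be_\o$ where $\o_c\le\o_u\le C\o_c$. Once \ref{a1}--\ref{a4} and \ref{p1}--\ref{p7} are in place, Theorem \ref{ExistencePrinc} yields (i)$\Leftrightarrow$(ii) for $F=F^\be$, and the case $F=E^\be$ follows from the inequality \eqref{EbetaFbetaIneq} $E^\be\ge F^\be$ together with the fact that a minimizer of one is a minimizer of the other (Theorem \ref{BRM1Thm}), exactly as in the \KR soliton argument.
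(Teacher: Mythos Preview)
Your proposal is correct and follows essentially the same route as the paper: verify \ref{a1}--\ref{a4} and \ref{p1}--\ref{p7} for $F=F^\be$, invoke Theorem \ref{ExistencePrinc}, then reduce $E^\be$ to $F^\be$ via \eqref{EbetaFbetaIneq} and Theorem \ref{BRM1Thm}. Two small corrections to your references: for both \ref{p1} (convexity of $F^\be$ along weak geodesics) and \ref{p5} (uniqueness of \KE edge metrics up to $G$), the paper cites Berndtsson \cite[Theorem 6.4]{brn}, not \cite{JMR}; and the paper takes $K=\Isom(M,D,g_{\o_u})_0$ rather than $\Isom(M,g_{\o_u})_0$, though for a \KE edge metric these should coincide.
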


\bpf
By Lemma \ref{JGPropernessLemma}, it suffices 
to verify the equivalence between (i) and (ii). 
We apply Theorem \ref{ExistencePrinc}, but as in \S\ref{KRSSec} 
only for $F=F^\beta$. Recall $G$ is defined in \eqref{GKEEDefEq}. 
For the remaining data in Notation \ref{MainNot} we set
$$
\mathcal R=\H_0^\be,\q d=d_1.
$$
Thus, in \ref{a1}, 
$\overline{\calR}=\E_1\cap \h{\rm AM}^{-1}(0)$,
by Lemmas \ref{E1capH0Lemma} and \ref{HbetaDense}.
Observe that \ref{a2} holds by Proposition \ref{FbetaExt}.
In \ref{a3}, the minimizers of $F$ are denoted by $\calM$.
Finally, \ref{a4} holds by Definition \ref{KEEDef}. 
Thus, it remains to verify Hypothesis \ref{MainHyp}.

\begin{enumerate}[label = (P\arabic*)]
  \item
This is \cite[Theorem 6.4]{brn}.
  \item 
This is Proposition \ref{Fbetacompact}. 
  \item 
This is Theorem \ref{BRM1Thm}.  
  \item 
This is Lemma \ref{ActionExtension}.
  \item 
This follows from \ref{p3} and the Berndtsson's uniqueness theorem 
\cite[Theorem 6.4]{brn}.
  \item Let $u \in \mathcal M$ be a \KE edge metric. We may apply Proposition \ref{p6Prop}  
with $K=\Isom(M,D,g_{\o_u})_0$ and $G$, since
the assumptions of Proposition \ref{PartialCartanProp} 
are verified by Proposition \ref{VectorFieldDecompEdgeProp}
and the proof of \ref{p6} in \S\ref{FirstKEThmSubSec}.
  \item 
$F^\be$ is the path-integrals of $G$-invariant
closed $1$-forms on $\calH^\be_\o$.

\end{enumerate}

This concludes the proof of Theorem \ref{KEEexistenceThm} for
$F=F^\be$.

We now let $F=E^\be$. By \eqref{EmodifFmodifIneq} and the previous paragraph
it suffices to verify the direction (ii) $\Rightarrow$ (i).
For this, Theorem \ref{ExistencePrinc}, thanks to Remark \ref{MainThmRemark} (i)
and Proposition \ref{Ebetacompact}, implies that $\calM\neq\emptyset$. Finally, Theorem \ref{BRM1Thm} gives that $\calM\subset\calR$, as desired.
\epf

\section{Constant scalar curvature metrics} 
\lb{cscSec}

We now state our main result concerning existence of constant scalar curvature metrics. In this section we set
\beq
\lb{GKEDefEq}
G:=\Aut(M,\JJJ)_0.
\eeq

\begin{theorem} 
\lb{CscThm} Let $(M,\o)$ be a K\"ahler manifold and suppose that minimizers of the K-energy $E$ 
on $\E_1$ are smooth. Then the following are equivalent:
\smallskip
\noindent (i) 
There exists a constant scalar curvature metric in $\mathcal H$.

\noindent (ii) 
$E$ is $G$-invariant and for some $C,D >0$,
$$
E(u) \geq C d_{1,G}(G0,Gu) - D, \qq u\in\H_0.
$$

\noindent (iii) 
$E$ is $G$-invariant and for some $C,D >0$,
$$
F(u) \geq C J_G(Gu) - D, \qq u\in\H_0.
$$

\end{theorem}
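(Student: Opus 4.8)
This is the explicit form of Theorem~\ref{MainReducThm}, and the plan is to derive it from the general existence/properness principle, Theorem~\ref{ExistencePrinc}, applied to the data $\mathcal R=\H_0$, $d=d_1$, $F=E$ the K-energy \eqref{Kendef}, and $G=\Aut(M,\JJJ)_0$; the argument then runs parallel to the proofs of Theorems~\ref{KEexistenceThm} and~\ref{KEEexistenceThm}. The equivalence of (ii) and (iii) is immediate from Lemma~\ref{JGPropernessLemma}, which compares $J_G$ with $d_{1,G}(G0,\,\cdot\,)$ on the completion $\E_1\cap\h{\rm AM}^{-1}(0)$, so it suffices to prove (i)~$\Leftrightarrow$~(ii). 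Going over Notation~\ref{MainNot}: \ref{a1} holds with $\overline{\mathcal R}=\E_1\cap\h{\rm AM}^{-1}(0)$ by Theorem~\ref{d1CompletionThm} and Lemma~\ref{E1capH0Lemma}; \ref{a2} holds by Proposition~\ref{EcscExtProp}, which provides the explicit largest $d_1$-lower semicontinuous extension of $E$; \ref{a3} introduces $\mathcal M$; and \ref{a4} holds because $G\subset\AutMJz$ preserves $\H$, hence $\H_0$.

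It remains to verify Hypothesis~\ref{MainHyp}. Property~\ref{p1} is the theorem of Berman--Berndtsson \cite[Theorem~1.1]{bb} on the continuity and convexity of the K-energy along weak $d_1$-geodesics; \ref{p2} is Proposition~\ref{EcscKcompact}; \ref{p3} is exactly the standing hypothesis, that every minimizer of $E$ over $\overline{\mathcal R}$ is smooth, hence lies in $\H_0=\mathcal R$; \ref{p4} is Lemma~\ref{dpIsomLemma}; \ref{p5} follows from \ref{p3} together with the Berman--Berndtsson uniqueness of constant scalar curvature metrics modulo $\Aut(M,\JJJ)_0$ \cite{bb}; and \ref{p7} holds because $E$ is the path integral over $\calH_\o$ of the $G$-invariant closed one-form $\vp\mapsto-(S(\o_\vp)-\bar S)\o_\vp^n$. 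The one property requiring genuine attention is \ref{p6}, which I would obtain from Proposition~\ref{p6Prop} with $w=u$ a csc minimizer in $\mathcal M\subset\H_0$, $K=\Isom(M,g_{\o_u})_0$, and $G=\AutMJz$. Its input, Proposition~\ref{PartialCartanProp}, needs the decomposition \eqref{kLieEq}; in the csc case this is supplied by the Matsushima--Lichnerowicz theorem as stated in Proposition~\ref{VectorFieldDecompProp}, taking $\fa$ to be the subalgebra of harmonic fields and $\tfk=\fh$ that of Hamiltonian fields, so that $\isom(M,g_{\o_u})=\fa\oplus\fh$ and $\autMJ=\fa\oplus\fh\oplus\JJJ\fh$ with $\fa$ a complex (that is, $\JJJ$-invariant) subalgebra lying in the center of $\isom(M,g_{\o_u})$. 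The three remaining hypotheses of Proposition~\ref{p6Prop} --- that $K.u=u$, that $t\mapsto\exp_I t\JJJ X.u$ is a $d_1$-geodesic whose speed depends continuously on $X\in\tfk$, and that $(f,g)\mapsto d_1(f.u,g.v)$ is continuous --- are checked exactly as in the verification of \ref{p6} in \S\ref{FirstKEThmSubSec}, using \eqref{XDecompEq}, the Semmes--Donaldson characterization of Mabuchi geodesics, Theorem~\ref{d1CompletionThm}, and \eqref{d1CharFormula}.

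With \ref{a1}--\ref{a4} and \ref{p1}--\ref{p7} verified, Theorem~\ref{ExistencePrinc} yields that $\mathcal M\neq\emptyset$ if and only if $E$ is $G$-invariant and $d_{1,G}$-proper on $\H_0$, which is statement~(ii); it then remains to match $\mathcal M\neq\emptyset$ with statement~(i). If $\mathcal M\neq\emptyset$, a minimizer $u$ lies in $\H_0$ by \ref{p3}, hence is a critical point of $E$ on $\H_0$, so $\o_u\in\H$ has constant scalar curvature. Conversely, if $\o_u\in\H$ is csc, then after normalizing $\h{\rm AM}(u)=0$ we have $u\in\H_0$, and along the $d_1$-geodesic from $u$ to an arbitrary $v\in\H_0$ (which stays in $\h{\rm AM}^{-1}(0)$ by linearity of $\h{\rm AM}$, Lemma~\ref{E1capH0Lemma}) the right derivative of $t\mapsto E(u_t)$ at $t=0$ equals $-\V\int_M\dot u_0\,(S(\o_u)-\bar S)\,\o_u^n=0$; since that function is convex by \ref{p1}, it is non-decreasing, so $E(v)\geq E(u)$, and by $d_1$-lower semicontinuity of $E$ (Proposition~\ref{EcscExtProp}) together with $d_1$-density of $\H_0$ in $\overline{\mathcal R}$ we conclude $u\in\mathcal M$.

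I expect the real difficulty to sit in \ref{p6} and in the cited analytic inputs for \ref{p1} and \ref{p5}. Unlike the Fano K\"ahler--Einstein case of \S\ref{FirstKEThmSubSec}, where $b_1(M)=0$ annihilates the harmonic part and $\isom(M,g_{\o_u})=\fh$, here $\fa$ may be nontrivial, and making \eqref{kLieEq} work hinges precisely on $\fa$ being $\JJJ$-invariant and central in $\isom(M,g_{\o_u})$ --- the content of Proposition~\ref{VectorFieldDecompProp}. The regularity obstruction that makes this a conditional theorem is entirely absorbed into the standing hypothesis, i.e.\ into property~\ref{p3}.
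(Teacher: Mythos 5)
Your proposal is correct and follows essentially the same route as the paper: the general principle of Theorem \ref{ExistencePrinc} applied to $(\H_0,d_1,E,\AutMJz)$, with \ref{p1} and \ref{p5} from Berman--Berndtsson, \ref{p2} from the entropy compactness, \ref{p3} absorbed into the standing regularity hypothesis, and \ref{p6} from Proposition \ref{p6Prop} combined with the Matsushima--Lichnerowicz decomposition of Proposition \ref{VectorFieldDecompProp}. The only addition is your explicit convexity argument identifying csc metrics with minimizers of the extended $E$, a step the paper leaves implicit but which you supply correctly.
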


\bpf
The proof is the same as that of Theorem \ref{KEexistenceThm}
except for the following points.
For the equivalence between (i) and (ii) 
we apply Theorem \ref{ExistencePrinc} to the data
$$
\mathcal R=\H_0, \q d=d_1,  \q F =E.
$$
Again, in \ref{a1}, 
$\overline{\mathcal R}=\E_1\cap \h{\rm AM}^{-1}(0)$.
Observe that condition \ref{a2} holds by Proposition \ref{EcscExtProp}.
Property \ref{p1} holds by \cite[Theorem 1.1]{bb}.
Property \ref{p2} holds by Proposition \ref{Ebetacompact}.
Property \ref{p3} holds by assumption. 
Property \ref{p5} holds by the work of Berman--Berndtsson
\cite[Theorem 1.3]{bb}. Suppose $u \in \mathcal H_0$ is  a constant scalar curvature metric. By the same argument as the one in Theorem \ref{KEexistenceThm}, due to Propositions \ref{p6Prop} and \ref{VectorFieldDecompProp}, property \ref{p6} holds 
with $K=\Isom(M,g_{\o_u})_0$ and $G=\AutMJz$.
\epf

\section*{Acknowledgments}
Research supported by BSF grant 2012236,
NSF grants DMS-1206284,1515703 and a Sloan Research Fellowship.

\def\bi{\bibitem}

\bigskip

\bigskip

{\sc University of Maryland} 

{\tt tdarvas@math.umd.edu, yanir@umd.edu}

\end{document}